\renewcommand\@seccntformat[1]{\csname the#1\endcsname.\enspace}
\renewcommand\@begintheorem[2]{\trivlist\item[\hskip\labelsep{\bfseries#1 #2.}]\it}
\renewcommand\@opargbegintheorem[3]{\trivlist\item[\hskip\labelsep{\bfseries#1 #2}] {\bfseries(#3).}\enspace\it\ignorespaces}
\renewenvironment{abstract}{\begin{quote}\hrulefill\par\footnotesize\textbf{\abstractname.}}{\par\vskip-0.5\baselineskip\hrulefill\end{quote}}
\newtheorem{introtheorem}{Theorem}  
\newtheorem{introcorollary}[introtheorem]{Corollary}  
\newtheorem{thm}{Theorem}[section]
\newtheorem{theorem}[thm]{Theorem}
\newtheorem{lemma}[thm]{Lemma}
\newtheorem{proposition}[thm]{Proposition}
\newtheorem{corollary}[thm]{Corollary}
\newtheorem{conjecture}[thm]{Conjecture}
\newcommand\mkthm[2]{\newenvironment{#1}{\begin{#2}\rm}{\end{#2}}}
\newtheorem{thevarthm}[thm]{\varthmname}
\newenvironment{varthm*}[1]{\trivlist\item[]{\bf #1.}\it}{\endtrivlist}
\newenvironment{introdefinition}{\begin{varthm*}{Definition}\rm}{\end{varthm*}}
\newenvironment{proof}[1][Proof]{\trivlist\item[\hskip\labelsep{\textit{#1.}}]}{\hspace*{\fill}$\Box$\endtrivlist}
\renewcommand\O{\mathcal O}
\newcommand\engqq[1]{``#1''}
\renewcommand\emptyset{\varnothing}  
\renewcommand\ge{\geqslant}  
\renewcommand\le{\leqslant}  
\newcommand\keywords[1]{{\renewcommand\thefootnote{}\footnotetext{\emph{Keywords:} #1.}}}
\newcommand\subclass[1]{{\renewcommand\thefootnote{}\footnotetext{\emph{Mathematics Subject Classification (2010):} #1.}}}
\newcommand\Q{\mathbb Q}
\newcommand\R{\mathbb R}
\newcommand\Z{\mathbb Z}
\newcommand\N{\mathbb N}
\newcommand\be[1][@{\;}r@{\;}c@{\;}l@{\;}l@{\;}]{$$\everymath{\displaystyle}\renewcommand\arraystretch{1.2}\begin{array}{#1}}
\newcommand\ee{\end{array}$$}
\newcommand\midtext[1]{\quad\mbox{#1}\quad}
\newcommand\righttext[1]{\qquad\mbox{#1}}
\newcommand\matr[1]{\left(\begin{array}{*{20}{c}} #1 \end{array}\right)}
\newcommand\inverse{^{\smash-\mkern-1mu1}}
\newcommand\tfrac[2]{{\textstyle\frac{#1}{#2}}}  
\newcommand\compact{\itemsep=0cm \parskip=0cm}
\newcommand\set[1]{\left\{#1\right\}}
\newcommand\with{\,\,\vrule\,\,}
\newcommand\q[3]{\frac{#1\cdot #2}{\mult_{#3}(#2)}}  
\newcommand\tmod[1]{\;({\rm mod}~#1)}
\newenvironment{bycases}{\left\{\begin{array}{@{}l@{\quad}l}}{\end{array}\right.}
\newcommand\isom{\simeq}  
\newcommand\eqnref[1]{(\ref{#1})}
\newcommand\newop[2]{\newcommand#1{\mathop{\rm #2}\nolimits}}
\newop\Bl{Bl}
\newop\Amp{Amp}
\newop\Aut{Aut}
\newop\Nef{Nef}
\newop\NS{NS}
\newop\mult{mult}
\newop\End{End}
\newcommand\Endsym{\End^{\rm sym}}
\newcommand\eps{\varepsilon}
\begin{document}

   \title{Seshadri constants on principally polarized \\ abelian surfaces with real multiplication}
   \author{\normalsize Thomas Bauer, Maximilian Schmidt}
   \date{\normalsize September 7, 2021}
   \maketitle
   \thispagestyle{empty}
   \keywords{abelian surface, Seshadri constant, real multiplication, Cantor function}
   \subclass{14C20, 14K12, 26A30}

\begin{abstract}
   Seshadri constants on abelian surfaces are fully
   understood in the case of Picard number one.
   Little is known so far for simple abelian surfaces of higher
   Picard number. In this paper we investigate
   principally polarized abelian surfaces with real
   multiplication. They are of Picard number two and might be
   considered the
   next natural case to be studied.
   The challenge is
   to not only determine the Seshadri constants of individual line bundles, but to
   understand the whole
   \emph{Seshadri function} on these surfaces.
   Our results show on the one hand that
   this function is
   surprisingly complex:
   On surfaces with real multiplication in $\Z[\sqrt e]$ it consists
   of linear segments that are never adjacent to each other -- it behaves like the Cantor function.
   On the other hand,
   we prove that
   the Seshadri function
   is invariant
   under an infinite group of automorphisms, which shows that
   it
   does have interesting regular behavior globally.
\end{abstract}


\section*{Introduction}

   The purpose of this paper is to contribute to the study
   of Seshadri constants on abelian surfaces.
   Recall that
   for an ample line bundle $L$
   on a smooth projective variety $X$, the \emph{Seshadri constant}
   of $L$ at a point $x\in X$ is by definition
   the real number
   \be
      \eps(L,x)=\inf\set{\q LCx\with C\mbox{ irreducible curve through } x}
      \,.
   \ee
   On
   abelian varieties, where
   this invariant is independent of
   the chosen point $x$, we write simply $\eps(L)$.
   Seshadri constants are highly interesting invariants
   for numerous reasons: They
   are related
   to minimal period lengths \cite{Lazarsfeld:periods,Bauer:periods},
   to syzygies \cite{LPP:positivity-syzygies,Kuronya-Lozovanu:Reider-type-theorem},
   and they govern quite generally the geometry of linear series in many
   respects
   \cite{Ein-Kuechle-Lazarsfeld,ELMNP:restricted-volumes}
   (we refer to \cite[Chapt.~5]{Lazarsfeld:PAG} and \cite{Bauer-et-al:primer}
   for more background on Seshadri constants).

   On abelian surfaces, Seshadri constants
   are fully understood in the case
   of Picard number $\rho=1$ \cite{Bauer:sesh-alg-sf}.
   For $\rho>1$, only self-products of elliptic curves have been
   studied, while the important case of simple abelian surfaces is
   completely unexplored so far.
   In contrast to the case of $\rho=1$,
   the challenge on these surface is not only to determine
   the Seshadri constant of one ample line bundle, but
   to understand the behavior of the
   \emph{Seshadri function},
   \be
      \eps:\Amp(X)\to\R, \quad L\mapsto \eps(L)
      \,,
   \ee
   which associates
   to each ample line bundle its Seshadri constant.
   To our knowledge, there are --
   also beyond abelian surfaces --
   hardly any cases where
   this function is known explicitly,
   the exception being
   certain self-products $E\times E$ of elliptic curves \cite{Bauer-Schulz}.
   In general, the Seshadri function
   of an abelian variety
   is known to be concave and continuous
   \cite[Prop.~3.1]{Bauer-Schulz},
   but
   at present it is unclear what kind of behavior to expect
   beyond these basic properties.

   We attack this problem on abelian surfaces of Picard number
   $\rho=2$, which seems to be the natural next case to
   investigate.
   As the Seshadri
   function is homogeneous, it is completely determined by its
   values on a cross-section of $\Amp(X)$.
   So, when $\Amp(X)$ is two-dimensional, we may consider it
   as a function $\eps:I\to\R$ on an interval $I\subset\R$.
   We always take this point of view when we speak of
   the Seshadri function.

   For clarity of exposition let us introduce a piece of
   terminology:

\begin{introdefinition}
   Let $I\subset\R$ be an interval. A function
   $f:I\to\R$ is called \emph{broken linear},
   if it is continuous and there is
   a non empty and nowhere dense
   subset $M\subset I$ such that
   the following holds:
   \begin{itemize}\compact
   \item[(i)]
      Around every point of $I\setminus M$ there is an open
      interval, contained in
      $I\setminus M$,
      on which $f$ is linear.

   \item[(ii)]
      If $I_1$ and $I_2$ are maximal open subintervals of $I$ on which
      $f$ is linear, then $I_1$
      and $I_2$ are contained in $I\setminus M$, and
      $I_1$ and $I_2$ are not adjacent to each other
      (i.e., an endpoint of $I_1$ is never an endpoint of $I_2$).

   \end{itemize}
\end{introdefinition}
   Note that these conditions imply that $M$ is a perfect set (i.e., that
   every point of $M$ is an accumulation point of $M$) and, thus, $M$ is uncountable.
   More concretely, condition (ii) implies that whenever a linear piece of $f$
   ends (i.e., one of the maximal subintervals mentioned in the definition),
   then no other linear piece begins at that point, but instead there is a
   sequence of linear pieces converging to that point. And the same
   applies to the converging pieces: each of them is again approached by
   a sequence of pieces.
   The Cantor function (see e.g.~\cite{Dovgoshey-et-al:Cantor-function})
   is an example of a broken linear function
   (in which case the Cantor set is the perfect set $M$).

   Our first result shows that
   on abelian surfaces with real multiplication,
   the Seshadri function is of the same baffling complexity as the Cantor function:

\begin{introtheorem}\label{introthm:linear-segments}
   Let $X$ be a principally abelian surface, whose
   endomorphism ring is isomorphic to $\Z[\sqrt e]$ for some non-square integer $e>0$.
   Then the Seshadri function of $X$ is broken linear.
\end{introtheorem}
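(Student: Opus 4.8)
The strategy is to reduce the problem to a completely explicit description of the Seshadri function and then verify the broken‑linear axioms on that description. First I would fix the Néron–Severi lattice: since $X$ has real multiplication by $\Z[\sqrt e]$, $\NS(X)$ has rank two and one can choose a basis so that the intersection form and the action of $\sqrt e$ are given by explicit integer matrices. Parametrizing a cross‑section of $\Amp(X)$ by a real variable $t$ in an interval $I$, the self‑intersection $L_t^2$ and the numerical class of $L_t$ become explicit quadratic/linear expressions in $t$. For a principally polarized abelian surface the Seshadri constant of an ample $L$ is controlled either by the "elliptic" bound coming from curves $C$ with $C^2=0$ (these exist precisely when a certain norm form represents $0$, which it does not here since $e$ is a non‑square, so this contributes nothing) or, more relevantly, by the sub‑maximal bound: $\eps(L)=\sqrt{L^2}$ unless there is an irreducible curve $C$ with $L\cdot C<\sqrt{L^2}\cdot\mult_x C$, and on an abelian surface the only candidates are elliptic curves and their translates, i.e. classes $C$ with $C^2=0$ or small positive square. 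So the key input I would use is the known structure theorem for Seshadri constants on abelian surfaces: $\eps(L)=\min\bigl(\sqrt{L^2},\ \inf_C \tfrac{L\cdot C}{\mult_x C}\bigr)$, and on a surface with $\rho=2$ and no elliptic curves the relevant sub‑maximal curves are governed by the solutions of Pell‑type equations arising from the norm form of $\Z[\sqrt e]$.

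The heart of the argument is therefore number‑theoretic. Each curve class $C$ contributing a genuine bound gives a linear inequality $\eps(L_t)\le \ell_C(t)$ for an affine‑linear function $\ell_C$ of $t$, and $\eps$ is the lower envelope (infimum) of $\sqrt{L_t^2}$ together with all these lines. Because $\sqrt{L_t^2}$ is strictly concave, on any closed subinterval where it lies strictly below all the lines $\ell_C$ the function $\eps$ equals $\sqrt{L_t^2}$ — but $\sqrt{L_t^2}$ is concave and not linear, so such intervals cannot occur in the final graph except possibly at isolated points; in fact I expect the lines to be dense enough that $\eps$ equals the lower envelope of the $\ell_C$ alone. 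The relevant curves come from the action of $\Aut(X)$ (an infinite group, by the second main theorem alluded to in the introduction) applied to a single sub‑maximal curve: the orbit produces infinitely many lines $\ell_C$, and the combinatorics of which line is minimal where is dictated by a continued‑fraction / Stern–Brocot structure on the eigenvalues of $\sqrt e$. I would make this precise by showing: (a) the maximal linear segments of $\eps$ are exactly the pieces of individual lines $\ell_C$ that appear on the lower envelope; (b) at the right endpoint $t_0$ of such a segment, where $\ell_C$ stops being minimal, the infimum is approached by a strictly decreasing sequence of other lines $\ell_{C_n}$ with slopes accumulating at the slope dictated by $\sqrt e$, so that $t_0$ is a two‑sided accumulation point of breakpoints and no other line is minimal in a one‑sided neighbourhood of $t_0$.

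Concretely, the steps in order: (1) set up $\NS(X)$, the cross‑section $I$, and the functions $L_t^2$, $L_t\cdot C$ explicitly; (2) invoke the abelian‑surface formula for $\eps$ and identify the sub‑maximal curve classes with solutions of the norm equation over $\Z[\sqrt e]$; (3) show the set $M$ of non‑linearity points is non‑empty and nowhere dense — non‑empty because there is at least one breakpoint, nowhere dense because between any two breakpoints the envelope is a single line (finitely many lines are minimal on any compact subinterval, by a boundedness estimate on $L\cdot C/\sqrt{L^2}$); (4) verify (i): every $t\notin M$ lies in the interior of a linear piece, immediate from the definition of $M$; (5) verify (ii), the non‑adjacency: take a maximal linear segment $I_1$ with endpoint $t_0$; using the automorphism action, exhibit an explicit infinite family of curves whose lines $\ell_{C_n}$ are minimal on intervals $I_n\to t_0$, with $I_1$ and each $I_n$ separated by further segments, so $I_1$'s endpoint is not an endpoint of any other maximal linear segment; conclude that $M$ is perfect and hence uncountable. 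The main obstacle is step (5): proving non‑adjacency requires the precise arithmetic of the Pell equation — one must show that the lines coming from successive convergents genuinely appear on the lower envelope (not just that they exist) and that their breakpoints cluster exactly at the endpoints of the previously found segments, which is where the Cantor‑like self‑similarity, driven by the infinite automorphism group, must be established rather than merely suspected.
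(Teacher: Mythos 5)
Your overall framework -- parametrize a cross-section of the nef cone, write $\eps$ as the lower envelope of the concave bound $\sqrt{L_t^2}$ and of linear functions $\ell_C$ attached to submaximal curves governed by Pell-type arithmetic -- matches the paper's setup, but the proposal has a genuine gap exactly where you flag it: step (5), the non-adjacency of maximal linear segments, is never established, and the mechanism you propose for it would not work as stated. First, the relevant curves are \emph{not} a single orbit of one submaximal curve under the isometry group: the group acts transitively on subcones of $\Amp(X)$, and inside a fundamental subcone there are infinitely many distinct Seshadri curves, so a ``convergents of one orbit'' / Stern--Brocot picture is not available. Second, your parenthetical claim in step (3) that only finitely many lines are minimal on any compact subinterval is false and in fact inconsistent with the conclusion you want: near any point with $\eps(L_t)=\sqrt{L_t^2}$ the envelope requires infinitely many lines (this is precisely why segments accumulate there), and if the envelope were locally finite the function would be piecewise linear with adjacent pieces.

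The paper closes these gaps with two ideas absent from your plan. (a) A parity argument: by the restrictions $C^2-\mult_0(C)^2\in\set{-1,-4}$ and $C_1\cdot C_2=\mult_0(C_1)\mult_0(C_2)$ for two curves submaximal for the same bundle, and because the intersection form on $\NS(X)$ for $\End(X)=\Z[\sqrt e]$ takes only even values while the multiplicities forced by $C_i^2=m_i^2-1$ are odd, \emph{at most one} irreducible curve can be submaximal for any ample bundle; hence wherever $\eps<\sqrt{L_t^2}$ the function is a single linear piece, and the maximal linear segments are exactly the submaximality intervals $I_C$ of individual Seshadri curves. (b) An explicit computation of the endpoints of these intervals via the Pell solution: the left endpoint always has the form $a-b\sqrt e$ and the right endpoint the form $a'+b'\sqrt e$ with $a,a'\in\Q$ and $b,b'\in\Q^{+}$, so by the $\Q$-linear independence of $1$ and $\sqrt e$ a right endpoint can never equal a left endpoint -- this single irrationality observation is what yields non-adjacency, with no need to control which lines appear on the envelope near the endpoint. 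Finally, the broken-linear axioms also require that the exceptional set $M=\set{t:\eps(L_t)=\sqrt{L_t^2}}$ be non-empty and nowhere dense; the paper gets nowhere-density from the density of classes with $L^2\equiv 2\ (\mathrm{mod}\ 4)$ (hence $\sqrt{L^2}\notin\Q$ and $\eps<\sqrt{L^2}$ on a dense open set), and non-emptiness from an explicit Seshadri curve of $L_0$ whose submaximality interval does not exhaust $\mathcal N(X)$, so its endpoints lie in $M$. Your step (3) would need to be replaced by arguments of this kind.
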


   This result is in stark contrast to what had been observed
   so far:
   When $E$ is a general elliptic curve,
   then the restriction of the Seshadri function on $E\times E$ to any line is
   a piecewise linear function, in the usual sense that
   each piece is adjacent to another piece~\cite{Bauer-Schulz}.
   The situation in
   Theorem~\ref{introthm:linear-segments} is
   at the other extreme:
   At no point are two pieces connected to
   each other.

   The endomorphism ring of an abelian surface with
   real multiplication is an order of a quadratic number field $\Q(\sqrt d)$.
   As the integer $e$ appearing in Theorem~\ref{introthm:linear-segments}
   is not required to be square-free, the only orders not covered there
   are those of the form
   $\Z[\frac 12+\frac 12\sqrt e]$, where $e\equiv 1 \tmod 4$.
   Surfaces with these endomorphism rings
   add another level of complexity:
   We show that on certain surfaces of this type, every line bundle
   has only one submaximal curve (which then computes its Seshadri constant),
   while
   there also exist surfaces
   of this type carrying line bundles with two submaximal curves
   (see Prop.~\ref{prop:overlapping-curves-50.000} and Prop.~\ref{prop:sequence-overlapping}).
   Interestingly,
   the conclusion of Theorem~\ref{introthm:linear-segments}
   extends to
   the former surfaces (see Theorem \ref{thm:linear-segments-never-adjacent}), whereas
   on the latter surfaces
   there exist
   boundary points of linear segments which are
   accumulation points, as well as boundary points
   where linear segments meet (see Remark~\ref{rem:border-points}).

   The discussion so far has shown how complex and subtle
   the Seshadri function
   on surfaces with real multiplication is.
   Our next result states that
   globally
   it has
   more structure
   than one might expect at this point:

\begin{introtheorem}\label{introthm:cone}
   There exists a decomposition of the ample cone into infinitely
   many subcones $\mathcal{C}_k$, $k\in\Z$, such that the
   group $G$ of isometries of $\NS(X)$ that leave
   the Seshadri function on $\Amp(X)$ invariant acts transitively
   on the set of subcones.
   In particular, the values of the Seshadri function on any subcone of the subcones $\mathcal{C}_k$ completely determine the Seshadri function on the entire ample cone.

\end{introtheorem}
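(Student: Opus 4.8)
The plan is to exhibit inside $G$ a cyclic subgroup, coming from $\Aut(X)$, that already permutes the pieces of a suitable decomposition transitively, and then to read off the last assertion formally. First I record where symmetries come from: let $\varphi$ be an automorphism of the abelian surface $X$, i.e.\ a unit of $\End(X)$. Then $\varphi^*$ is an isometry of the lattice $\NS(X)$ mapping $\Amp(X)$ onto itself; moreover, for every $x\in X$ the map $\varphi$ induces a bijection between the irreducible curves through $x$ and those through $\varphi\inverse(x)$ preserving $L\cdot C$ and $\mult_x(C)$, so $\eps(\varphi^*L,\varphi\inverse(x))=\eps(L,x)$, and since Seshadri constants on abelian surfaces are point-independent, $\eps\circ\varphi^*=\eps$ on $\Amp(X)$. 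Hence $\varphi^*\in G$ for every unit $\varphi\in\End(X)$.

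Next I produce an element of infinite order. Since $\End(X)\isom\Z[\sqrt e]$ with $e>0$ non-square, Dirichlet's unit theorem gives $\End(X)^\times\isom\set{\pm1}\times\Z$; let $u$ generate the free factor, $\varphi$ the corresponding automorphism, and $\psi:=\varphi^*\in G$. By the description of $\NS(X)$, its intersection form, and the $\End(X)$-action established above --- equivalently: identifying $\NS(X)\otimes\Q$ with $\Q(\sqrt e)$ equipped with a (nonzero) multiple of its norm form, on which $u$ (being fixed by the Rosati involution, as it lies in the real multiplication) acts by multiplication by $u\cdot u^\dagger=u^2$ --- the isometry $\psi$ has the two distinct real eigenvalues $u^2$ and $\bar u^2=u^{-2}$. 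Thus $\psi$ has infinite order, it is a hyperbolic isometry of the signature-$(1,1)$ lattice $\NS(X)$, and its two eigenrays are isotropic for the intersection form. Because $X$ is simple (its endomorphism algebra being a field), $\Nef(X)=\overline{\Amp(X)}$ is exactly one of the two nappes of $\set{D\with D^2\ge 0}$, so the two boundary rays of $\Amp(X)$ are precisely the two isotropic rays, i.e.\ the eigenrays of $\psi$. Consequently $\psi(\Amp(X))=\Amp(X)$ and, on a cross-section of $\Amp(X)$ (an interval, as $\rho(X)=2$), $\psi$ moves rays strictly monotonically, fixing the two boundary rays, one attracting and one repelling.

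Now I build the decomposition. Fix any ray $\rho_0$ in the interior of $\Amp(X)$ and set $\rho_k:=\psi^k(\rho_0)$ for $k\in\Z$. By the previous paragraph the $\rho_k$ are pairwise distinct, monotone along the cross-section, and converge to the two boundary rays of $\Amp(X)$ as $k\to\pm\infty$; hence, letting $\mathcal C_k$ be the subcone bounded by $\rho_k$ and $\rho_{k+1}$, we have $\Amp(X)=\bigcup_{k\in\Z}\overline{\mathcal C_k}$, with distinct $\mathcal C_k$ meeting at most along a common boundary ray. Since $\psi(\mathcal C_k)=\mathcal C_{k+1}$, the subgroup $\langle\psi\rangle$ of $G$ acts transitively on $\set{\mathcal C_k\with k\in\Z}$, which proves the first assertion. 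For the last sentence: from $\mathcal C_k=\psi^{\,k-j}(\mathcal C_j)$ and $\eps\circ\psi=\eps$ one gets $\eps|_{\mathcal C_k}=\eps|_{\mathcal C_j}\circ\psi^{\,j-k}$ for all $k$, so the values of $\eps$ on a single $\mathcal C_j$ determine $\eps$ on every $\mathcal C_k$; continuity of the Seshadri function then pins down its values on the separating rays $\rho_k$, and since $\Amp(X)=\bigcup_k\overline{\mathcal C_k}$ this determines $\eps$ on all of $\Amp(X)$.

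The main obstacle is the middle step: one must verify, in the explicit model of $\NS(X)$ and of the $\End(X)$-action, that the fundamental unit really acts with \emph{infinite} order on $\NS(X)$ --- it would act with order at most $2$ if it acted only through the norm-one part of the representation --- and that its fixed rays are exactly the boundary of the nef cone. Everything else is soft. A subsidiary point worth checking in that model is orientation: that the generator $\varphi$ itself, and not just $\varphi^2$, preserves rather than interchanges the two nappes of the positive cone; this is automatic here because the eigenvalues $u^2,\bar u^2$ of $\psi$ are both positive.
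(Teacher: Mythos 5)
Your argument is essentially correct, but it reaches the statement by a genuinely different route than the paper. The paper first \emph{determines} the group $G$: it writes down the conditions for a matrix in $\mathrm{GL}_2(\Z)$ to be an isometry of $\NS(X)$ preserving the ample cone and then invokes Thm.~\ref{thm:coinciding-seshadri-functions} (which rests on the algorithm of Thm.~\ref{thm:algorithm}) to conclude that every such isometry preserves the Seshadri function. This yields $G=\langle\varphi_0,\tau\rangle$ (resp.\ $\langle\psi_0,\sigma\rangle$), where $\varphi_0$ is multiplication by the fundamental norm-one unit and $\tau$ is a Galois-type involution \emph{not} induced by any automorphism of $X$; the subcones are then spanned by consecutive principal polarizations $L_k=\varphi_0^kL_0$ and refined by the involution, which makes the family $G$-stable and exhibits a minimal fundamental cone (this is what is actually used for the computations in Sect.~\ref{sec:sample-plots}). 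You instead manufacture a single element of $G$ from geometry: the fundamental unit $u\in\End(X)^\times$ is an automorphism of $X$, hence preserves Seshadri constants, and acts on $\NS(X)\isom\End(X)$ as multiplication by $u^2$, a hyperbolic isometry whose isotropic eigenrays bound the nef cone; the orbit of an interior ray then tiles $\Amp(X)$. Your route is more elementary in that it needs no input from the algorithm, while the paper's route buys the full group $G$ and the smallest possible fundamental cone.

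Two caveats. First, as written you assume $\End(X)\isom\Z[\sqrt e]$, whereas the theorem also covers $\End(X)=\Z[\tfrac12+\tfrac12\sqrt e]$; your argument goes through verbatim, since Dirichlet's unit theorem applies to any real quadratic order and the Rosati involution is still trivial on it, but you should say so. Second, your family of cones is only shown to be permuted (transitively) by the cyclic subgroup $\langle\psi\rangle\subset G$; the full group $G$ is strictly larger (it contains the involution $\tau$, which for a generic choice of $\rho_0$ does not map your cones into your family), so the literal assertion that $G$ acts transitively \emph{on the set of subcones} is established only in the weaker sense that any two subcones are $G$-equivalent. This is easily repaired by choosing $\rho_0$ compatibly (e.g.\ the ray of a principal polarization, or of the $\varphi_0\circ\tau$-fixed class), which is exactly what the paper's construction does. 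Note also that when the fundamental unit has norm $+1$ your $\psi$ is the square of the paper's generator, so your cones may be coarser than the paper's; this is harmless for the statement but gives a larger fundamental domain.
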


   There are only few known cases where one has effective computational access to the
   Seshadri constants of all line bundles on the surface
   (the self-product $E\times E$ of a general elliptic curve being an exception again).
   Our methods provide such computational access
   for the surfaces studied here.

\begin{introtheorem}\label{introthm:algorithm}
   There is an algorithm that
   computes the Seshadri constant
   of every given ample line bundle
   on
   principally polarized
   abelian surfaces with real multiplication.
\end{introtheorem}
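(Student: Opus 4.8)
The plan is to package the structural results above into an effective procedure. Fix a principally polarized abelian surface $X$ with real multiplication and an ample class $L\in\NS(X)$, given by its coordinates with respect to a fixed basis of $\NS(X)\cong\Z^2$; write $x$ for the base point, noting that $\eps(L,x)$ does not depend on it. I would start from the standard alternative: either $\eps(L)=\sqrt{L^2}$, or $\eps(L)$ is computed by a \emph{submaximal curve}, i.e.\ an irreducible curve $C$ through $x$ with $\frac{L\cdot C}{\mult_x(C)}=\eps(L)<\sqrt{L^2}$. The task is therefore to decide which alternative holds and, in the second case, to locate $C$.

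The first ingredient is an a priori bound on the class of a submaximal curve. Let $m:=\mult_x(C)$. The Hodge index theorem gives $(L\cdot C)^2\ge L^2\cdot C^2$, and since $K_X=0$ adjunction yields $p_a(C)=1+\tfrac12 C^2$; as $C$ carries an $m$-fold point this forces $C^2\ge m^2-m-2$. Combining,
$$\eps(L)\ =\ \frac{L\cdot C}{m}\ \ge\ \sqrt{L^2}\cdot\frac{\sqrt{\,m^2-m-2\,}}{m}\,.$$
Hence, as soon as one has \emph{any} value $u$ with $\eps(L)\le u<\sqrt{L^2}$, the inequality above bounds the multiplicity of the curve computing $\eps(L)$ by an explicit $M=M(u,L^2)$; and because $L$ is ample, the set of classes $C$ with $\mult_x(C)\le M$, $C^2\ge 0$ and $L\cdot C\le u\cdot\mult_x(C)$ is finite and can be listed. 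So everything reduces to two effectivity questions: produce such a $u$ — equivalently, exhibit one submaximal curve, or certify that $\eps(L)=\sqrt{L^2}$ — and, for each of the finitely many candidate classes, decide whether it is represented by an irreducible curve with a point of the prescribed multiplicity, computing the ratio in that case.

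For both I would invoke the explicit classification of submaximal curves that underlies Theorem~\ref{introthm:linear-segments} (and Theorem~\ref{thm:linear-segments-never-adjacent} for the remaining orders $\Z[\tfrac12+\tfrac12\sqrt e]$). Using the group $G$ of Theorem~\ref{introthm:cone}, one first determines the subcone $\mathcal C_k$ containing $L$ by testing $L$ against the explicitly known walls, and replaces $L$ by $g\cdot L\in\mathcal C_0$ for the element $g\in G$ carrying $\mathcal C_k$ to $\mathcal C_0$; by $G$-invariance of the Seshadri function this does not change $\eps(L)$. Inside the fundamental subcone $\mathcal C_0$ the classification provides, for the given $L$, either a submaximal curve among an explicit finite list of \engqq{primary} classes — which supplies $u$ — or the information that there is no submaximal curve, in which case $\eps(L)=\sqrt{L^2}$ and we stop. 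In the first case one runs through the finitely many candidate classes bounded as above; for each class $C$ the question whether $C$ is represented by an irreducible curve with an $\mult_x(C)$-fold point is settled by the same classification (equivalently, by Riemann--Roch on $X$, which gives $h^0(\O_X(C))=\tfrac12 C^2$ for $C^2>0$, together with the count of $\tfrac12\mult_x(C)(\mult_x(C)+1)$ conditions imposed by the singularity and the irreducibility analysis already carried out). Then $\eps(L)$ is the minimum of $\sqrt{L^2}$ and the ratios $\frac{L\cdot C}{\mult_x(C)}$ over the surviving classes.

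The main obstacle is not the numerical bookkeeping but the two appeals to the classification of submaximal curves: that within $\mathcal C_0$ the existence of a submaximal curve is detected among finitely many explicit classes, and that \engqq{a given class carries an irreducible curve with a prescribed multiple point} is an effectively decidable property on these surfaces. Both rest on the detailed geometric analysis of the earlier sections — essentially on knowing exactly which complete linear systems on $X$ have irreducible members with the expected singularities — whereas the Hodge index and adjunction estimates only serve to make the ensuing search finite. Granting that analysis, the procedure terminates and returns $\eps(L)$, which is the assertion of the theorem.
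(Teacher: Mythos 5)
Your reduction to a finite search is fine as far as it goes (Hodge index plus adjunction do bound the multiplicity of a computing curve once some $u<\sqrt{L^2}$ with $\eps(L)\le u$ is known, and ampleness then leaves finitely many candidate pairs of class and multiplicity), but the two points you yourself flag as ``effectivity questions'' are exactly the crux of the theorem, and your way of discharging them is circular. You appeal to ``the explicit classification of submaximal curves that underlies Theorem~\ref{introthm:linear-segments}'' both to produce $u$ (or to certify $\eps(L)=\sqrt{L^2}$) and to decide, for each candidate class, whether it carries an irreducible curve with the prescribed multiple point. No such classification is available independently of the algorithm: in the paper, Theorem~\ref{introthm:linear-segments} is itself a consequence of the Pell-bound machinery, and nowhere is there an effective criterion deciding whether a given class contains an irreducible curve with a prescribed multiplicity at a point. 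Riemann--Roch together with the count of $\tfrac12 m(m+1)$ conditions only produces \emph{some} divisor with multiplicity at least $m$ when the count is favorable; it neither certifies irreducibility, nor pins down the actual multiplicity, nor decides non-existence otherwise. So as written the procedure is not an algorithm.

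The paper's proof is built precisely to avoid both questions. For $\sqrt{L_\lambda^2}\notin\Q$ a Pell divisor always exists, so the Pell bound $\pi_\lambda(\lambda)<\sqrt{L_\lambda^2}$ supplies the submaximal upper bound $u$ for free; and Prop.~\ref{prop:C-m} together with Prop.~\ref{prop:irred-pell-bound} shows that the Seshadri quotient function $\ell_C$ of \emph{any} irreducible submaximal curve coincides with some Pell bound $\pi_\mu$ (its multiplicity equals the expected Pell multiplicity), while every $\pi_\mu(\lambda)$ is an upper bound for $\eps(L_\lambda)$ whether or not the Pell divisor is irreducible. Hence $\eps(L_\lambda)=\min\{\pi_\mu(\lambda):\mu\in A_\lambda\}$ for the explicitly computable finite set $A_\lambda$ of Prop.~\ref{prop:crit-pell-irred2}, obtained from the length bound of Lemma~\ref{lem:interval-length} and Prop.~\ref{prop:seshadri-submaximal-interval} --- no irreducibility or multiplicity of an actual curve ever needs to be verified. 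The remaining case $\sqrt{L_\lambda^2}\in\Q$ (which your scheme does not treat separately, though there Pell divisors and your source of $u$ are unavailable) is handled in the paper by the bound $\eps(L)\le(2L^2-1)/(2\sqrt{L^2})$, valid whenever $\eps(L)<\sqrt{L^2}$ because a Seshadri curve is a component of a divisor in $|2L|^+$, combined with slope bounds coming from Seshadri curves of two nearby bundles with irrational $\sqrt{L_{\mu_i}^2}$; this yields a computable interval on which any Seshadri curve must be submaximal, again reducing to finitely many Pell bounds, and if none is submaximal at $\lambda$ one concludes $\eps(L_\lambda)=\sqrt{L_\lambda^2}$. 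If you want to repair your argument, you would need to replace the appeal to a ``classification'' by this kind of mechanism (expected multiplicities from Pell equations plus the rigidity statements of Prop.~\ref{prop:C-m}), since that is what makes the search both well-founded and purely numerical.
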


   The algorithm enables us to efficiently compute Seshadri
   functions and thus to provide a graphical representation
   for any given endomorphism ring
   (see examples in Sect.~\ref{sec:sample-plots}).
   Also, the analysis of the method underlying the proof of
   Theorem~\ref{introthm:algorithm}
   allows us to answer the question as to which
   data in fact determine
   the Seshadri function.
   A priori, the function could depend on the individual surface
   (or rather, on its isomorphism class).
   However, the numerical data entering the
   computation ultimately stems from the endomorphism ring, and this implies:

\begin{introcorollary}\label{introcor:Endomorphism}
   Let $X$ and $Y$ be principally polarized abelian surfaces
   with real multiplication, such that
   $\End(X)\isom\End(Y)$.
   Then, in suitable linear coordinates on $\NS(X)$ and $\NS(Y)$, their Seshadri functions
   coincide.
\end{introcorollary}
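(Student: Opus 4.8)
The plan is to trace through the algorithm of Theorem~\ref{introthm:algorithm} and check that every number it consumes is an invariant of $\NS(X)$ equipped with its intersection form, the class of the principal polarization, and the action of $\End(X)$ -- and that this whole structure is pinned down, up to isomorphism, by the abstract ring $\End(X)$ alone.

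First I would recall which data actually feed the computation of $\eps(L)$. For a principally polarized abelian surface $X$ with real multiplication, $\NS(X)$ is a rank-two lattice, and via the principal polarization $\lambda$ together with the Rosati involution one has the canonical identification $L\mapsto\lambda\inverse\circ\phi_L$ of $\NS(X)_\Q$ with the space $\Endsym_\Q(X)$ of symmetric endomorphisms; under it the principal polarization corresponds to $1\in\End(X)$ and the self-intersection pairing becomes a fixed quadratic form determined by the norm of the order $\End(X)$. Choosing an integral basis in this picture, the Gram matrix of the intersection form, the coordinate vector of the principal polarization, and the matrix by which a generator of $\End(X)$ acts on $\NS(X)$ are all explicit functions of the order $\End(X)$. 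The algorithm only ever refers to this intersection form, to the ample cone (which is the component of the positive cone containing the principal polarization, hence likewise determined), and to a family of candidate numerical classes -- the classes that can possibly carry a submaximal curve -- together with, for each, its self-intersection, its intersection number with $L$, and the multiplicity at the origin that a curve in that class must have in order to be submaximal. For the surfaces in question this candidate family is described through solutions of Pell-type equations attached to $\End(X)$, and the attached numerical quantities are again arithmetic in $\End(X)$.

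Next I would observe that an abstract ring isomorphism $\phi\colon\End(X)\to\End(Y)$ induces an isomorphism of all of this structure: it restricts to an isomorphism of quadratic orders, preserves norm and trace, carries symmetric endomorphisms to symmetric endomorphisms and $1$ to $1$, and therefore descends to a lattice isometry $\psi\colon\NS(X)\to\NS(Y)$ that matches the intersection forms, sends the principal polarization of $X$ to that of $Y$, matches the ample cones, intertwines the $\End$-actions, and hence carries the candidate family for $X$ onto the one for $Y$ with all attached numerical data preserved. Since the algorithm of Theorem~\ref{introthm:algorithm} outputs $\eps(L)$ as a function of these data alone, $\psi$ transports it, i.e. $\eps_Y(\psi(L))=\eps_X(L)$ for every ample $L$ on $X$; picking bases of $\NS(X)$ and $\NS(Y)$ in which $\psi$ is the identity matrix then yields linear coordinates in which the two Seshadri functions are literally the same function.

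The step requiring genuine care -- and the only real obstacle -- is the claim that the candidate family captures \emph{all} curves relevant to the Seshadri constant and that effectivity of such a numerical class, together with the value of $\mult_0$ on the curve it contains, is forced by the numerical data rather than by the individual surface $X$. This is precisely the input one must extract from the analysis behind Theorems~\ref{introthm:linear-segments} and~\ref{introthm:algorithm}: on a principally polarized abelian surface with real multiplication a numerical class in the relevant range contains an irreducible submaximal curve exactly when the arithmetic condition coming from $\End(X)$ holds, so that nothing finer than $\End(X)$ can enter the final answer.
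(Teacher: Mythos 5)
Your proposal takes essentially the same route as the paper: Thm.~\ref{thm:coinciding-seshadri-functions} deduces the corollary exactly by observing that every number consumed by the algorithm of Thm.~\ref{thm:algorithm} (the intersection form in the basis $(L_0,L_\infty)=(\varphi\inverse(1),\varphi\inverse(\sqrt e))$ resp.\ $\varphi\inverse(\tfrac12+\tfrac12\sqrt e)$, the description of the ample cone, and the Pell equations and Pell bounds) is determined by $\End(X)$ alone, so the Seshadri functions agree in these coordinates. The step you flag as needing genuine care is precisely what Prop.~\ref{prop:irred-pell-bound}, Prop.~\ref{prop:crit-pell-irred2} and Prop.~\ref{prop:crit-pell-seshadri} supply -- every Seshadri curve's linear function coincides with a Pell bound and $\varepsilon(L_\lambda)=\min\set{\pi_\mu(\lambda)\with \mu\in A_\lambda}$ is a purely numerical formula -- so nothing finer than $\End(X)$ enters and your sketch closes without further input.
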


   Note that the assumption that $\End(X)$ and $\End(Y)$
   be isomorphic is strictly weaker than requiring
   that $X$ and $Y$ be isomorphic. In fact, the corollary shows that
   only countably many Seshadri functions occur, while
   the surfaces vary in two-dimensional families.

   Concerning the organization of this paper, we start in
   Sect.~\ref{sec:pell-curves-abelian} by establishing crucial properties
   of Pell divisors and submaximal curves.
   We study in Sect.~\ref{sec:local} the intervals on which curves
   can be submaximal.
   Sect.~\ref{sec:abelian-surfaces-real-mult} is devoted to the proofs of Thms.~\ref{introthm:linear-segments} and \ref{introthm:algorithm}, as well as Cor.~\ref{introcor:Endomorphism}.
   In Sect.~\ref{sec:sample-plots} we study the decomposition
   of the ample cone and prove Thm.~\ref{introthm:cone}.
   Finally, in Sect.~\ref{sec:2-submaximal}
   we investigate
   in terms of $\End(X)$
   the question
   on which surfaces there are line bundles with two submaximal
   curves.

   We would like to thank Robert Lazarsfeld for valuable suggestions
   concerning the
   exposition.

   Throughout we work over the field of complex numbers.


\section{Pell divisors and submaximal curves on abelian surfaces}\label{sec:pell-curves-abelian}
   As in the introduction, we refer to \cite[Chapt.~5]{Lazarsfeld:PAG} and \cite{Bauer-et-al:primer}
   for background on Seshadri constants. Let us just fix a few
   matters of terminology here. When we speak of the
   \emph{general upper bound}, then we mean the bound
   $\eps(L,x)\le\sqrt{L^2}$, which is valid for every ample line bundle $L$ on a
   smooth projective
   surface $S$ and for every point $x\in S$. An effective divisor
   $D$ on $S$ is called \emph{submaximal} (for $L$ at $x$), if
   $L\cdot D/\mult_x D<\sqrt{L^2}$.
   If an
   irreducible curve $C\subset S$ satisfies the equation
   $L\cdot C/\mult_x C=\eps(L,x)$, then we say that
   $C$ \emph{computes} $\eps(L,x)$.
   An irreducible curve which computes $\eps(L,x)$ for
   some ample line bundle $L$ on $S$
   will be called a \emph{Seshadri curve} on $S$.

   It was shown in \cite{Bauer:sesh-alg-sf} that
   on an abelian surface of Picard number one,
   with ample generator $L$ of the Néron-Severi group,
   there is for suitable $k\ge 1$
   a divisor $D\in|2kL|$ that computes
   the Seshadri constant of $L$.
   The number $k$ and the multiplicity of $D$ at $0$
   are governed by a Pell equation.
   We will see that a suitable notion of
   \emph{Pell divisors} (in the sense of the subsequent definition)
   also play a crucial role in the present investigation.
   The results in this section
   work on all abelian surfaces and
   do not require that the surface has real
   multiplication.

\begin{definition}
   Let $A$ be an abelian surface, and let $L$ be an ample primitive
   symmetric line bundle such that $\sqrt{L^2}\notin\Z$.
   Consider the Pell equation
   \be
      \ell^2 - L^2\cdot k^2 = 1
   \ee
   and let $(\ell,k)$ be its primitive solution.
   A divisor $D\in|2kL|^+$ with $\mult_0 D\ge 2\ell$
   is called a \emph{Pell divisor} for $L$.
\end{definition}

   Here $|2kL|^+$ denotes the linear subsystem
   of even divisors in $|2kL|$, i.e, those defined by even theta functions
   (see \cite[Sect.~4.7]{BL:CAV}).
   It will be convenient to extend the notion of Pell divisors to
   non-primitive bundles, and even to $\Q$-divisors:

\begin{definition}
   Let $A$ be an abelian surface, and let $M$
   be any ample $\Q$-line bundle on
   $A$ such that $\sqrt{M^2}\notin\Q$.
   Write $M=q L$ with a primitive ample line
   bundle $L$ and $q\in\Q$.
   A \emph{Pell divisor} for $M$ is then by definition
   a Pell divisor for $L$.
\end{definition}

   It was shown in \cite[Theorem~A.1]{Bauer-Szemberg:periods-appendix} that
   Pell divisors exist for every ample line bundle $L$ with $\sqrt{L^2}\notin\Z$.
   Their
   crucial feature is that they are {submaximal} for $L$.
   By contrast,
   the existence of submaximal divisors
   is not guaranteed when $\sqrt{L^2}\in\Z$.
   However, a dimension count shows that
   for such bundles
   there exist
   divisors
   $D\in|2L|^+$ satisfying in any event
   the weak inequality
   $L\cdot D / \mult_0D\le\sqrt{L^2}$.

   We will see that
   on abelian surfaces with real multiplication
   it is
   almost never true that $\eps(L)$ is computed by a Pell divisor of
   $L$. However, it will turn out that
   $\eps(L)$ is always computed
   by a Pell divisor of \emph{some} ample bundle on $L$, whenever
   $\sqrt{L^2}$ is irrational.
   It is for this reason that Pell divisors are crucial players
   in the present investigation.

   The following statement will prove to be a valuable tool,
   as it provides
   strong restrictions on submaximal curves.
   Also, it exhibits a situation where Pell divisors are unique.

\begin{proposition}\label{prop:C-m}
   Let $A$ be an abelian surface, and let $C\subset A$ be
   an irreducible curve that is submaximal for some ample
   line bundle. Then, putting $m=\mult_0C$, one has
   \be
      C^2-m^2=-1 \midtext{or} C^2-m^2=-4
      \,.
   \ee
   Furthermore, suppose that $C$ is not an elliptic curve,
   and write $\O_A(C)=pM$ with a primitive ample bundle $M$
   and an integer $p>0$.
   Then $\sqrt{M^2}$ is irrational, and letting
   $(\ell_0,k_0)$ be the primitive solution of the Pell
   equation $\ell^2-M^2 k^2=1$, we have:
   \begin{itemize}
   \item[\rm(i)]
      If $C^2-m^2=-1$, then the divisor $2C$ is the
      only Pell divisor for $M$ and $(\ell_0,k_0)=(m,p)$.
   \item[\rm(ii)]
      If $C^2-m^2=-4$, then the curve $C$ is the
      only Pell divisor for $M$ and $(2\ell_0,2k_0)=(m,p)$.
      In this case, the origin is the only halfperiod that
      lies on $C$.
   \end{itemize}
\end{proposition}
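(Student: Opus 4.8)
The plan is to work on the blow-up of $A$ at a point and to combine a Hodge-index argument with the structure theory of symmetric (even) divisors and with the minimality of the fundamental solution of the relevant Pell equation.

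\smallskip
Since $A$ is abelian I may translate $C$ and assume it is submaximal for $L$ at the origin, so $m=\mult_0C\ge 1$. Let $\pi:\tilde A\to A$ be the blow-up of $0$, let $E$ be the exceptional curve, and let $\tilde C=\pi^*C-mE$ be the strict transform, so that $\tilde C^2=C^2-m^2$ and $K_{\tilde A}=E$. For the inequality $C^2-m^2\le -1$ I would argue as follows: the class $\pi^*L-\sqrt{L^2}\,E$ has square $0$ and meets $\pi^*L$ positively, hence lies on the boundary of one half of the positive cone; submaximality of $C$ says it meets $\tilde C$ negatively. But $\tilde C$ is effective with $\pi^*L\cdot\tilde C=L\cdot C>0$, so $\tilde C$ lies in the closure of the same half of the positive cone, and two such classes pair nonnegatively -- forcing $\tilde C^2<0$. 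If $C$ is an elliptic curve through $0$ it is an elliptic subgroup, so $C^2=0$ and $m=1$, whence $C^2-m^2=-1$; thus from now on I assume $C$ is not elliptic.

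\smallskip
Every irreducible curve on an abelian surface has nonnegative self-intersection, with $C^2=0$ only for elliptic subgroups; hence $C^2>0$ and $\O_A(C)$ is ample, and I write $\O_A(C)=pM$ with $M$ primitive ample, $p\ge 1$. Since $C^2<m^2$ we get $\O_A(C)\cdot C/m=C^2/m<\sqrt{C^2}$, so $C$ is submaximal for $\O_A(C)$ and $\eps(M)<\sqrt{M^2}$. If some irreducible curve $B\ne C$ computed $\eps(\O_A(C))$, then $C\cdot B\ge m\cdot\mult_0B$ together with $C\cdot B=\eps(\O_A(C))\cdot\mult_0B\le(C^2/m)\cdot\mult_0B$ would give $m^2\le C^2$; hence $C$ is the \emph{unique} irreducible curve computing $\eps(\O_A(C))$, and likewise the unique one computing $\eps(M)$. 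Applying this to $(-1)^*C$, which is numerically equivalent to $C$, has $\mult_0$ equal to $m$, and also computes $\eps(\O_A(C))$, I conclude $(-1)^*C=C$, i.e.\ $C$ is a symmetric divisor. Moreover $\sqrt{M^2}$ must be irrational: otherwise $\sqrt{M^2}\in\Z$, and by the known structure of Seshadri constants of primitive polarizations with integral $\sqrt{M^2}$, $\eps(M)$ would be computed by an elliptic curve, contradicting uniqueness of the non-elliptic $C$. So the Pell equation $\ell^2-M^2k^2=1$ has a fundamental solution $(\ell_0,k_0)$, and by \cite[Theorem~A.1]{Bauer-Szemberg:periods-appendix} there is a Pell divisor $P\in|2k_0M|^+$ with $\mult_0P\ge 2\ell_0$, submaximal for $M$.

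\smallskip
From $P$ we get $\eps(M)\le M\cdot P/\mult_0P\le k_0M^2/\ell_0$, and among all solutions $(\ell,k)$ of $\ell^2-M^2k^2=1$ the number $kM^2/\ell$ is smallest for the fundamental one; since $\eps(M)=M\cdot C/m=pM^2/m$, this forces $\eps(M)=k_0M^2/\ell_0$, hence $p\ell_0=mk_0$ and therefore $(m,p)=p''(\ell_0,k_0)$ with $p''=\gcd(m,p)$, so that
$$C^2-m^2=p^2M^2-m^2=-p''^{\,2}(\ell_0^2-k_0^2M^2)=-p''^{\,2}.$$
It remains to show that $p''\in\{1,2\}$, and this is the heart of the proposition and the step I expect to be the main obstacle: here one uses that $2k_0M$, being divisible by $2$, carries a totally symmetric structure, so that even divisors in $|2k_0M|^+$ have even multiplicity at every halfperiod, and one feeds the symmetric submaximal curve $C$ -- or the even divisor $2C$ when $\mult_0C$ is odd -- into the same dimension count for even divisors with prescribed multiplicity at the halfperiod $0$ that produces the Pell equation in \cite{Bauer-Szemberg:periods-appendix}; irreducibility of $C$ together with minimality of $(\ell_0,k_0)$ then leaves only $p''=1$ (so $(m,p)=(\ell_0,k_0)$, $\mult_0C=\ell_0$ odd, $C^2-m^2=-1$) or $p''=2$ (so $(m,p)=(2\ell_0,2k_0)$, $\mult_0C=2\ell_0$ even, $C^2-m^2=-4$). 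Finally the uniqueness assertions: in case (i), $2C$ lies in $|2pM|^+=|2k_0M|^+$ with $\mult_0(2C)=2\ell_0$, hence is a Pell divisor, and conversely any Pell divisor $D\in|2k_0M|^+$ is submaximal for $M$, so a component of $D$ through $0$ realizing the minimal quotient computes $\eps(M)$ and therefore equals $C$ -- comparing classes ($D\equiv 2k_0M=2\O_A(C)$) and multiplicities ($\mult_0D\ge 2m$) then shows $D-2C$ is effective and numerically trivial, so $D=2C$; case (ii) is identical with $C$ in place of $2C$. That $0$ is the only halfperiod on $C$ in case (ii) follows by one more step: any other halfperiod $\tau$ on $C$ would have $\mult_\tau C$ positive and even, and then $t_\tau^*C$ would be another even divisor in $|2k_0M|^+$; pushing $C$ to the quotient $A/\langle\tau\rangle$ exhibits $\O_A(C)$ as the pullback of a polarization under a $2$-isogeny, which is incompatible with $M$ being primitive (alternatively, when $\mult_\tau C=m$ one gets $t_\tau^*C\ne C$ computing $\eps(\O_A(C))$, against uniqueness).
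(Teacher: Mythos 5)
Your opening step (negativity of $C^2-m^2$ via the blow-up) and your uniqueness-plus-symmetry argument for the curve computing $\eps(\O_A(C))$ are fine, but the proof breaks down exactly where you flag it. First, the assertion that minimality of the fundamental solution ``forces $\eps(M)=k_0M^2/\ell_0$, hence $p\ell_0=mk_0$'' is a non sequitur: the Pell divisor only gives $pM^2/m=\eps(M)\le k_0M^2/\ell_0$, and the reverse inequality would require knowing beforehand that $(m,p)$ (or $(m/2,p/2)$) is itself a solution of $x^2-M^2y^2=1$ --- which is precisely what is to be proved. Minimality of $(\ell_0,k_0)$ among Pell solutions says nothing about the quotient $p/m$ of a pair not known to solve the equation, so your identity $C^2-m^2=-p''^{\,2}$ is unestablished. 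Second, the reduction $p''\in\{1,2\}$, which you yourself call the heart of the matter, is only a sketch: the dimension count for even divisors with prescribed multiplicity at $0$ (as in Bauer--Szemberg) produces submaximal divisors, but it imposes no upper bound on $m^2-C^2$ for an irreducible curve, and ``irreducibility together with minimality'' does not by itself exclude $p''\ge 3$.

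The missing input is geometric: since $C$ is symmetric (your argument for that is sound), it descends to the smooth Kummer surface of $A$, where its image is a $(-2)$-curve; this gives $C^2-\sum_{i=1}^{16}m_i^2=-4$ with $m_i$ the multiplicities at the sixteen halfperiods, and from this one obtains both the dichotomy $C^2-m^2\in\{-1,-4\}$ (after excluding $-2,-3$ as in Bauer--Grimm--Schmidt) and, in the case $C^2-m^2=-4$, that the origin is the only halfperiod on $C$ --- a statement your quotient-by-$\tau$ sketch does not deliver, since $C$ passing through a halfperiod $\tau$ does not make it invariant under translation by $\tau$, so it need not descend to $A/\langle\tau\rangle$. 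Once the dichotomy is in hand, $(m,p)$ resp.\ $(m/2,p/2)$ does solve the Pell equation (evenness of $p$ in the second case coming from total symmetry of $\O_A(C)$), and then your comparison with the Pell-divisor bound and minimality pins down $(\ell_0,k_0)$ exactly as in the paper. Note also that your irrationality argument rests on the unproved claim that integral $\sqrt{M^2}$ forces an elliptic Seshadri curve; in the paper irrationality is an immediate consequence of the dichotomy, so it cannot be used on the way to proving it.
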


\begin{proof}
   The first half of the
   following argument is implicit in the proof of
   \cite[Thm.~2]{Bauer-Grimm-Schmidt:integral}.
   To provide easier access,
   we briefly make it explicit here.
   As the claim on $C^2-m^2$ is
   certainly true for elliptic curves,
   we may assume that $C$ is non-elliptic, and hence that
   $\O_A(C)$ is ample.
   The assumption that $C$ is submaximal for
   some ample line bundle $L$ then implies that $C$ is submaximal
   also for $\O_A(C)$, and in fact computes $\eps(\O_A(C))$
   (see \cite[Prop. 1.2]{Bauer-Schulz}).
   Therefore $C$ must be symmetric and therefore descends
   to a $(-2)$-curve on the smooth Kummer surface of $A$
   (cf.~proof of \cite[Thm. 6.1]{Bauer:sesh-alg-sf}).
   The multiplicities $m_i=\mult_{e_i}(C)$ at the sixteen
   halfperiods $e_i$ of $A$
   therefore satisfy the equation
   \begin{equation}\label{eqn:minus-four}
      C^2 - \sum_{i=1}^{16} m_i^2 = -4
      \,.
   \end{equation}
   Putting $m=m_1$,
   one shows
   as in the proof of \cite[Thm.~1.2]{Bauer-Grimm-Schmidt:integral}
   that only the two cases
   \begin{equation}\label{eqn:differences}
      C^2 - m^2 = -1 \quad\mbox{or}\quad
      C^2 - m^2 = -4
   \end{equation}
   are possible. This proves the first statement in the proposition.

   Write now $\O_A(C)=pM$ with a primitive ample bundle $M$
   and $p>0$.
   It follows from Eqn.~\eqnref{eqn:differences}
   that $C^2$ cannot be a perfect square, and hence that
   $\sqrt{M^2}$ is irrational.
   In the first case
   of \eqnref{eqn:differences},
   the pair $(m,p)$ satisfies the Pell
   equation
   $m^2-M^2p^2=1$.
   The minimality of the solution $(\ell_0,k_0)$ implies then that
   $m\ge\ell_0$ and $p\ge k_0$.
   On the other hand,
   as $C$ computes $\eps(M)$, we have
   for every Pell divisor $P\in|2k_0M|$ of $L$,
   \be
      \frac{M\cdot C}m \le \frac{M\cdot P}{\mult_0 P}\le \frac{M\cdot P}{2\ell_0}
   \ee
   This implies
   $\frac pm \le \frac{k_0}{\ell_0}$.
   Using the fact that both pairs $(m,p)$ and $(\ell_0,k_0)$
   solve the Pell equation, we find $m\le\ell_0$, and hence
   $(m,p)=(\ell_0,k_0)$.
   So we have $P=2C$ in this case.

   In the second case
   of \eqnref{eqn:differences},
   the number $m$ is clearly even.
   But also $p$ is even in this case, because
   all multiplicities $m_i$ are
   even (since we have $(m_1,\dots,m_{16})=(m,0,\dots,0)$).
   Therefore $\O(C)$ is totally symmetric,
   and it can therefore be written as an even multiple of
   another bundle
   (see \cite[Sect. 2, Cor. 4]{BL:CAV}).
   The upshot of this argument is that the pair $(\frac m2,\frac p2)$
   satisfies the Pell equation
   $(\frac m2)^2-M^2(\frac p2)^2=1$.
   The minimality assumption implies then that
   $\frac m2\ge\ell_0$ and $\frac p2\ge k_0$.
   But $C$ must be a component of any Pell divisor $P\in |2k_0M|$ by
   \cite[Lemma~6.2]{Bauer:sesh-alg-sf}, and so $p=2k_0$ and
   $m=2\ell_0$.
   So we have
   $P=C$ in this case.
\end{proof}

   Further, we show how two curves can intersect if they are
   submaximal for the same bundle:

\begin{proposition}\label{prop:C1C2-m1m2}
   Let $A$ be an abelian surface, and let $C_1$ and $C_2$
   be two irreducible curves on $A$ that are submaximal
   for the same ample line
   bundle $L$ on $A$, i.e.,
   \be
      \q L{C_i}0<\sqrt{L^2}
   \ee
   for $i=1,2$. Then, putting $m_i=\mult_0(C_i)$, we have
   \be
      C_1\cdot C_2 = m_1 m_2
      \,,
   \ee
   i.e., the curves $C_1$ and $C_2$ meet only at the origin, and
   their tangent cones have no common components there.
\end{proposition}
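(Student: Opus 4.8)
Since each $C_i$ is submaximal for $L$ it passes through the origin (otherwise $\mult_0 C_i=0$ and the ratio defining submaximality is not finite), so $m_i\ge 1$; and because $C_1$ and $C_2$ are distinct irreducible curves (as the statement presupposes) we have $C_1\cdot C_2\ge I_0(C_1,C_2)\ge m_1m_2$, where $I_0$ denotes the local intersection number at the origin. So only the reverse inequality needs proof; put $t:=C_1\cdot C_2-m_1m_2\ge 0$, the goal being $t=0$.

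The key step is to exploit submaximality of $L$ not only against $C_1$ and $C_2$ but against the whole family $a_1C_1+a_2C_2$ with $a_1,a_2>0$. Adding the inequalities $L\cdot C_i<m_i\sqrt{L^2}$ gives $L\cdot(a_1C_1+a_2C_2)<(a_1m_1+a_2m_2)\sqrt{L^2}$, and since $L$ is ample (so $L^2>0$) the Hodge index theorem yields $(a_1C_1+a_2C_2)^2\le (L\cdot(a_1C_1+a_2C_2))^2/L^2<(a_1m_1+a_2m_2)^2$. By Proposition~\ref{prop:C-m}, whose first assertion holds also for elliptic curves (for which $C_i^2=0$, $m_i=1$), we may write $C_i^2=m_i^2-\delta_i$ with $\delta_i\in\{1,4\}$. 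Substituting and cancelling the terms $a_i^2m_i^2$ and $2a_1a_2m_1m_2$ turns the inequality into $-\delta_1a_1^2+2t\,a_1a_2-\delta_2a_2^2<0$ for all $a_1,a_2>0$; evaluating at $(a_1,a_2)=(t,\delta_1)$ (we may assume $t>0$, else we are done) gives $\delta_1(t^2-\delta_1\delta_2)<0$, i.e.\ $t^2<\delta_1\delta_2$. In particular $t\le 3$, and if $\delta_1=\delta_2=1$ then $t=0$ already.

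For the remaining cases I would bring in the arithmetic part of Proposition~\ref{prop:C-m}(ii): when $\delta_i=4$ the bundle $\O_A(C_i)$ is an even multiple of a primitive bundle and $m_i$ is even. Writing $C_1\cdot C_2=\O_A(C_1)\cdot\O_A(C_2)$ in $\NS(A)$, one sees that if one $\delta_i=4$ then $2\mid C_1\cdot C_2$, and $2\mid m_1m_2$ as well (since $m_i$ is even), so $t$ is even; together with $t^2<\delta_1\delta_2=4$ this forces $t=0$. If both $\delta_i=4$, then $4\mid C_1\cdot C_2$ and $4\mid m_1m_2$, so $4\mid t$, and with $t^2<16$ again $t=0$. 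Hence $C_1\cdot C_2=m_1m_2$ in every case, and then $m_1m_2=C_1\cdot C_2\ge I_0(C_1,C_2)\ge m_1m_2$ forces $I_0(C_1,C_2)=m_1m_2$ with no further intersection points, which is precisely the assertion that $C_1$ and $C_2$ meet only at the origin and that their tangent cones there have no common component.

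The computation is short; the point that needs care is that the Hodge index estimate by itself only delivers $t\le 3$, not $t=0$. Closing that gap rests essentially on the parity and divisibility information carried by Proposition~\ref{prop:C-m}(ii) in the $(-4)$-case, and making this interplay precise is, I expect, the heart of the argument.
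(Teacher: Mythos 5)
Your proof is correct, and its second half (the parity/divisibility endgame) is exactly the paper's: in the cases where some $C_i^2-m_i^2=-4$ you use $m_i$ even and $C_i\equiv 2B_i$ from Proposition~\ref{prop:C-m}(ii) to force the nonnegative integer $t=C_1\cdot C_2-m_1m_2$ to be divisible by $2$ (resp.\ $4$), which together with $t^2<\delta_1\delta_2$ gives $t=0$; this is precisely how the paper closes the gap, and your remark that the Hodge-type estimate alone only gives $t\le 3$ identifies the right crux. Where you differ is in the derivation of the key inequality $t^2<\delta_1\delta_2$, i.e.\ $(C_1\cdot C_2-m_1m_2)^2<(C_1^2-m_1^2)(C_2^2-m_2^2)$: the paper blows up the origin, notes that $f^*L-tE$ is big for suitable $t<\sqrt{L^2}$ and meets both proper transforms negatively, places $C_1',C_2'$ in the negative part of the Zariski decomposition, and reads off the inequality from negative definiteness of their intersection matrix; you instead stay on $A$, apply the Hodge index inequality $(L\cdot D)^2\ge L^2D^2$ to the effective classes $D=a_1C_1+a_2C_2$, add the two strict submaximality inequalities, and optimize over $(a_1,a_2)$ (the evaluation at $(t,\delta_1)$ is exactly the check that the form $-\delta_1a_1^2+2ta_1a_2-\delta_2a_2^2$ is negative on the positive quadrant, which for $t\ge0$ is equivalent to negative definiteness). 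So the two arguments prove the same numerical statement, but yours avoids the blow-up and Zariski decomposition entirely and is more elementary and self-contained, at the cost of being special to this two-curve situation, whereas the paper's Zariski-decomposition mechanism is the standard tool that generalizes (e.g.\ it underlies the bound on the number of submaximal curves used elsewhere in the paper). Two small points you handle correctly but that deserve the explicit remark you half-make: distinctness of $C_1,C_2$ is needed both for $I_0(C_1,C_2)\ge m_1m_2$ and, in the paper's version, for the $2\times2$ negative-definite matrix; and Proposition~\ref{prop:C-m}(ii) is only stated for non-elliptic curves, which suffices because an elliptic curve on an abelian surface is smooth, hence has $C^2-m^2=-1$ and never lands in the $\delta=4$ case.
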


\begin{proof}
   Consider the blow-up $f:Y=\Bl_0(A)\to A$, let
   $E$ be its exceptional divisor, and
   let $C_i'\subset Y$ be
   the proper
   transform of $C_i$.
   For rational numbers $t<\sqrt{L^2}$,
   the $\Q$-divisor
   \be
      B := f^*L-tE
   \ee
   is big, because $B^2=L^2-t^2>0$ and $B\cdot f^*L=L^2>0$.
   If we take $t$ strictly between
   $\max\set{\frac{L\cdot C_1}{m_1},\frac{L\cdot C_2}{m_2}}$ and $\sqrt{L^2}$,
   then we moreover have
   \be
      B\cdot C_i' = (f^*L-tE)\cdot(f^*C_i-m_iE)
      =L\cdot C_i - tm_i < 0
      \,.
   \ee
   As a consequence,
   both $C_1'$ and $C_2'$ must be contained in the negative
   part of the Zariski decomposition of $B$, and hence their
   intersection matrix is negative definite.
   This implies that
   $C_1'^2 C_2'^2 > (C_1'C_2')^2$, i.e.,
   \begin{equation}\label{eqn:C1C2-inequality}
      (C_1^2-m_1^2)(C_2^2-m_2^2) > (C_1\cdot C_2 - m_1m_2)^2
      \,.
   \end{equation}

   We know from Prop.~\ref{prop:C-m} that
   $C_i^2-m_i^2\in\set{-1,-4}$.
   Let us first consider the case
   $C_1^2-m_1^2=C_2^2-m_2^2=-1$. Then
   inequality \eqnref{eqn:C1C2-inequality} directly implies
   $C_1\cdot C_2-m_1 m_2=0$.
   Suppose next that
   $C_1^2-m_1^2=-4$ and $C_2^2-m_2^2=-1$.
   In that case
   inequality \eqnref{eqn:C1C2-inequality} tells us that
   \be
      C_1\cdot C_2 - m_1m_2 < 2
      \,,
   \ee
   since we have in any event
   $C_1\cdot C_2 - m_1m_2 \ge 0$
   because of the intersection inequality.
   Using now
   Prop.~\ref{prop:C-m},
   we see
   that $m_1$ is an even number and
   that $C_1\equiv 2B_1$ for some line bundle
   $B_1$ on $A$.
   So we obtain
   $
      B_1\cdot C_2 - \frac{m_1}2 m_2 < 1
   $
   and hence $B_1\cdot C_2=\frac{m_1}2 m_2$, which implies
   $C_1\cdot C_2 = m_1m_2 = 0$, as claimed.
   Finally,
   if
   both $C_1^2-m_1^2$ and $C_2^2-m_2^2$ equal $-4$, then
   we get
   \be
      C_1\cdot C_2 - m_1m_2 < 4
   \ee
   from inequality \eqnref{eqn:C1C2-inequality},
   and we have
   $C_1\equiv 2B_1$,
   $C_2\equiv 2B_2$. As both $m_1$ and $m_2$ are even, this
   yields
   $B_1\cdot B_2 - \frac{m_1}2 \frac{m_2}2 = 0$,
   and this implies the assertion.
\end{proof}


\section{Submaximal curves on intervals}\label{sec:local}

\paragraph{Abelian surfaces with real multiplication.}
   Let $X$ be a simple abelian surface
   with real multiplication, i.e., such that
   $\End_\Q(X)=\Q(\sqrt d)$ for some square-free integer $d\ge 2$.
   The endomorphism ring is an order in $\End_\Q(X)$ and hence
   of the form $\End(X)=\Z+f\omega\Z$, where $f\ge 1$ is an
   integer and
   \be
      \omega=
      \begin{bycases}
         \sqrt d            & \mbox{ if } d\equiv 2,3 \tmod 4 \\
         \tfrac12(1+\sqrt d) & \mbox{ if } d\equiv 1 \tmod 4\,.
      \end{bycases}
   \ee
   For our purposes an alternative distinction of
   the possible
   cases will be
   more convenient:
   \begin{itemize}\compact
   \item
      \emph{Case 1:} $\End(X)=\Z[\sqrt{e}]$, with a non-square integer $e>0$.
   \item
      \emph{Case 2:} $\End(X)=\Z[\frac12+\frac12\sqrt{e}]$ with a non-square integer $e>0$ such that $e\equiv 1\tmod 4$.
   \end{itemize}

   If $X$ carries a principal polarization $L_0$, then we have an isomorphism
   $\varphi:\NS(X)\to\Endsym(X)=\End(X)$. It provides
   us with a lattice basis of $\NS(X)$, given by
   $L_0=\varphi\inverse(1)$ and $L_{\infty}:=\varphi\inverse(\sqrt{e})$ (resp.\ $\varphi\inverse(\tfrac12+\tfrac12\sqrt{e})$).
   The intersection matrix of this basis is
   \begin{equation}\label{eq:intersection-matrices}
      \matr{2 & 0 \\
      0       & -2e}
      \mbox{ in Case 1, and}
      \matr{2 & 1 \\
      1       & \tfrac{1-e}{2}}
      \mbox{ in Case 2.}
   \end{equation}
   This follows by considering the characteristic polynomials
   of $\sqrt{e}$ and $\tfrac12+\tfrac12\sqrt{e}$ in $\Q(\sqrt e)$
   (which coincides with the analytic characteristic polynomial
   of the endomorphism)
   and applying
   \cite[Prop.~5.2.3]{BL:CAV}.

Using the Nakai-Moishezon criterion (in the version of \cite[Cor.~4.3.3]{BL:CAV}),
and the fact that $X$ does not contain any elliptic curves,
we find:

\begin{lemma}\label{lem:ample-criterion}
Let $L$ be a line bundle on $X$ with numerical class given by $L=aL_0+bL_\infty$ for $a,b\in\Z$.
If $\End(X)=\Z[\sqrt{e}]$, then $L$ is ample if and only if
$$
      a>0 \midtext{and} a^2-eb^2>0\,,
$$
and if $\End(X)=\Z[\tfrac12+\tfrac12\sqrt{e}]$, then $L$ is ample if and only if
   $$
      \qquad\qquad a>0 \midtext{and} a^2+ab+\tfrac{1-e}{4}\,b^2>0\,.
   $$
   In either case, $L$ is ample if and only if
   $|L|\ne\emptyset$.
\end{lemma}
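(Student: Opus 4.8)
The plan is to reduce ampleness on $X$ to two numerical conditions, valid on any simple abelian surface, and then to specialise these to the coordinates $(a,b)$ using the intersection matrices in \eqnref{eq:intersection-matrices}. The starting point is that, since $X$ carries no rational curve and no elliptic curve while $K_X=0$, adjunction forces $C^2=2p_a(C)-2\ge 2$, hence $C^2>0$, for every irreducible curve $C\subset X$. By the Hodge index theorem (the intersection form on the rank-two lattice $\NS(X)$ has signature $(1,1)$) a nonzero effective divisor $D=\sum_i n_iC_i$ then satisfies $D^2\ge\sum_i n_i^2C_i^2>0$, and $D\cdot C>0$ for \emph{every} irreducible curve $C$: this is clear when $C$ lies in the support of $D$, and if $C$ lies outside the support then $D\cdot C=0$ would exhibit a two-dimensional positive-definite subspace of $\NS(X)\otimes\R$, which is impossible. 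Hence, by the Nakai--Moishezon criterion in the form of \cite[Cor.~4.3.3]{BL:CAV}, every nonzero effective divisor on $X$ is ample; conversely an ample $L$ has $h^0(L)=\chi(L)=\tfrac12L^2>0$ by Riemann--Roch and Mumford vanishing, so $|L|\neq\emptyset$. This already proves the last sentence of the lemma, and reduces the rest to deciding in coordinates when $L=aL_0+bL_\infty$ is effective.

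For that I would use Riemann--Roch once more: for any class with $L^2>0$ one has $h^0(L)+h^0(-L)\ge\chi(L)=\tfrac12L^2>0$, while by Hodge index $L\cdot L_0\neq 0$; since a nonzero effective divisor meets the ample class $L_0$ strictly positively, it follows that exactly one of $L,-L$ is a nonzero effective divisor, namely the one with positive $L_0$-degree. Combined with the first paragraph this gives: $L$ is ample if and only if $L^2>0$ and $L\cdot L_0>0$. Substituting \eqnref{eq:intersection-matrices} in Case 1 yields $L^2=2(a^2-eb^2)$ and $L\cdot L_0=2a$, which is precisely the stated criterion. In Case 2 one obtains $L^2=2\bigl(a^2+ab+\tfrac{1-e}{4}b^2\bigr)$ and $L\cdot L_0=2a+b$.

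To finish Case 2 I would record the identity $a^2+ab+\tfrac{1-e}{4}b^2=\tfrac14\bigl((2a+b)^2-eb^2\bigr)$, so that $L^2>0$ means $(2a+b)^2>eb^2$; in particular $L^2>0$ forces $a\neq 0$ and $2a+b\neq 0$. A short sign analysis then shows that, granted $L^2>0$, the quantities $a$ and $2a+b$ have the same sign, so that $L\cdot L_0>0$ may be replaced by $a>0$: after the normalisation $(a,b)\mapsto(-a,-b)$ one may assume $a>0$, and if one had $2a+b<0$ then $b<-2a<0$ and $-(2a+b)>\sqrt e\,|b|$, i.e.\ $-2a>(1-\sqrt e)b$, whose right-hand side is positive since $e>1$ and $b<0$ --- a contradiction. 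This yields the asserted criterion in Case 2.

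The computational parts --- the Riemann--Roch bookkeeping and the substitution of the intersection matrices --- are routine, and I expect no serious obstacle. The single step that needs a genuine (if small) argument, rather than a direct read-off, is the passage in Case 2 from the natural condition ``$L\cdot L_0>0$'' to the cleaner ``$a>0$''. The one thing to be careful about throughout is to use simplicity (the absence of elliptic curves) exactly where it is needed, namely to guarantee $C^2>0$ for all irreducible curves $C$ on $X$; this is what makes ``effective'' and ``ample'' coincide and drives both halves of the lemma.
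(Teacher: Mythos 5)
Your argument is correct and follows essentially the same route as the paper, which simply invokes the Nakai--Moishezon criterion together with the absence of elliptic curves on the simple surface $X$ and the intersection matrices \eqnref{eq:intersection-matrices}; you merely supply the details the paper leaves to the citation (adjunction giving $C^2>0$, Riemann--Roch/Hodge index to reduce ampleness to $L^2>0$ and $L\cdot L_0>0$, and the sign analysis replacing $2a+b>0$ by $a>0$ in Case 2), and these are all handled correctly.
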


   From now on we will assume that
   $X$ is a principally polarized abelian surface with real multiplication.
   We are interested in its \emph{Seshadri function}
   \be
      \varepsilon :\Nef(X)\to \R,\qquad L\mapsto \varepsilon(L)=\varepsilon(L,0)
      \,.
   \ee
   Thanks to homogeneity, it
   is enough to consider this
   function on a compact cross-section of the nef cone.
   Any non-trivial nef class $L\in\NS_\R(X)$
   is a positive multiple of a class of the form
$L_t:=L_0+tL_\infty$ with suitable $t\in\R$.
Applying Lemma~\ref{lem:ample-criterion}, we see that if
$\End(X)=\Z[\sqrt{e}]$, then the line bundle $L_t$ is nef if and
only if $|t|\leq \frac{1}{\sqrt{e}}$, and if $\End(X)=\Z[\tfrac12
+ \tfrac12 \sqrt{e}]$, then $L_t$ is nef if and only if
$-\tfrac{2}{\sqrt{e}+1}\leq t\leq \tfrac{2}{\sqrt{e}-1}$.
Ampleness holds when the inequalities are strict.
We denote by $\mathcal N(X)=[-\frac{1}{\sqrt{e}},\frac{1}{\sqrt{e}}]$ and $\mathcal N(X)=[-\tfrac{2}{\sqrt{e}+1}, \tfrac{2}{\sqrt{e}-1}]$, respectively, the interval where $L_t$ is nef.
This interval $\mathcal N(X)$ is a model for the cross-section of
the nef cone, and therefore we will also write $L_t\in \mathcal N(X)$
instead of $t\in \mathcal N(X)$.
So for every nef $\R$-line bundle, the ray $\R_{> 0}L$ has a unique representative in $\mathcal N(X)$, and we
may consider the Seshadri function as
\be
\varepsilon:\mathcal N(X)\to\R,\qquad t\mapsto \varepsilon(L_t)\,.
\ee

Any effective divisor $D$ defines a linear function
\be
\ell_D:\R\to \R,\qquad t\mapsto \frac{D\cdot L_t}{\mult_0 D}
\ee
which computes the Seshadri quotient of the divisor $D$ for any line bundle $L_t$.
We denote the open subset containing all ample line bundles $L_t$, whose Seshadri quotient with $D$ is submaximal, by $I_D$, i.e.,
\be
I_D:= \left\{ t\in \mathcal N(X)\,\,\vrule\,\, \ell_D(t)<\sqrt{L_t^2}\right\}\,,
\ee
and we
call $I_D$ the \textit{submaximality interval} of $D$.

It is a result of Szemberg \cite[Prop.~1.8]{Szemberg} that on any smooth projective surface $S$ an ample line bundle can have at most $\rho(S)$ (two, in our case) submaximal curves at any given point.
Using the restrictions derived from Prop.~\ref{prop:C-m} and \ref{prop:C1C2-m1m2} we show that in many cases only \emph{one} curve can exist:

\begin{theorem}\label{thm:one-submax-curve-lb}
Let $L$ be any ample line bundle on $X$ with $\varepsilon(L)<\sqrt{L^2}$.
Suppose that either
\begin{itemize}\compact
\item $\End(X)=\Z[\sqrt{e}]$ for a non-square integer $e>0$, or
\item $\End(X)=\Z[\tfrac12 + \tfrac12 \sqrt{e}]$ for a non-square integer $e>0$, such that $e\equiv 1$ modulo $4$ and $e$ has a prime factor $p$ with $p\equiv 5$ or $7$ modulo $8$,
\end{itemize}
holds.
Then there exists exactly one irreducible curve $C$ that is submaximal for $L$.
\end{theorem}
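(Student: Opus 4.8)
The plan is to argue by contradiction. Since $\varepsilon(L)<\sqrt{L^2}$, the description of $\varepsilon(L)=\varepsilon(L,0)$ as an infimum over irreducible curves through the origin already produces a curve $C$ with $L\cdot C/\mult_0 C<\sqrt{L^2}$, so at least one submaximal curve for $L$ exists; by Szemberg's bound there are at most $\rho(X)=2$ of them. It therefore suffices to derive a contradiction from the assumption that $L$ has two distinct submaximal curves $C_1$ and $C_2$. Put $m_i=\mult_0 C_i\ge 1$; since $X$ is simple it contains no elliptic curve, so $\O_X(C_i)$ is ample and we may write $\O_X(C_i)=p_iM_i$ with $M_i$ primitive ample and $p_i\ge 1$.

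The first step is to feed $C_1$ and $C_2$ into Propositions \ref{prop:C-m} and \ref{prop:C1C2-m1m2}. Prop.~\ref{prop:C-m} gives $\delta_i:=C_i^2-m_i^2\in\{-1,-4\}$; setting $c_i=1$ when $\delta_i=-1$ and $c_i=2$ when $\delta_i=-4$, the pair $(n_i,q_i):=(m_i/c_i,\,p_i/c_i)$ is the primitive solution of $\ell^2-M_i^2k^2=1$. Comparing $C_i^2=m_i^2+\delta_i$ with $C_i^2=p_i^2M_i^2=c_i^2q_i^2M_i^2=c_i^2(n_i^2-1)$ yields the two bookkeeping identities $\delta_i=-c_i^2$ and $n_i^2=1+M_i^2q_i^2$. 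Prop.~\ref{prop:C1C2-m1m2} gives $C_1\cdot C_2=m_1m_2$, and since also $C_1\cdot C_2=p_1p_2\,(M_1\cdot M_2)$, dividing by $c_1c_2$ yields $n_1n_2=q_1q_2\,(M_1\cdot M_2)$.

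Next I would split according to the form of $\End(X)$. If $\End(X)=\Z[\sqrt e]$, then the first intersection matrix in \eqnref{eq:intersection-matrices} shows that $M_i^2$ and $M_1\cdot M_2$ are even, so $n_i^2=1+M_i^2q_i^2$ is odd, $n_i$ is odd, and $n_1n_2$ is odd --- contradicting that $n_1n_2=q_1q_2(M_1\cdot M_2)$ is even. If $\End(X)=\Z[\tfrac12+\tfrac12\sqrt e]$, I first note that $C_1$ and $C_2$ are linearly independent in $\NS(X)$: from $C_1\cdot C_2=m_1m_2$ one gets $C_1^2C_2^2-(C_1\cdot C_2)^2=\delta_1m_2^2+\delta_2m_1^2+\delta_1\delta_2=-c_1^2c_2^2(n_1^2+n_2^2-1)<0$. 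Hence $\Z C_1+\Z C_2$ has finite index $N$ in $\NS(X)$, and equating this Gram determinant with $N^2$ times the discriminant $-e$ of $\NS(X)$ (read off from the second matrix in \eqnref{eq:intersection-matrices}) gives $-c_1^2c_2^2(n_1^2+n_2^2-1)=-eN^2$. As $e$ is odd while $c_1^2c_2^2$ is a power of $2$, we get $c_1c_2\mid N$, and writing $N=c_1c_2N'$ we land on the clean identity $n_1^2+n_2^2-1=eN'^2$; in particular $n_1^2+n_2^2\equiv 1\tmod{p}$ for the prime $p\mid e$ supplied by the hypothesis.

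The last step brings in the congruence condition on $p$, which says exactly that $-2$ is a quadratic non-residue modulo $p$ (indeed $\left(\tfrac{-2}{p}\right)=-1$ precisely when $p\equiv 5$ or $7\tmod{8}$). Writing $M_i=\alpha_iL_0+\beta_iL_\infty$ and reducing modulo $p\mid e$ using the second matrix in \eqnref{eq:intersection-matrices}, one finds $2M_i^2\equiv(2\alpha_i+\beta_i)^2\tmod{p}$; combined with $M_i^2q_i^2=n_i^2-1$ this yields $(\gamma_iq_i)^2\equiv 2(n_i^2-1)\tmod{p}$, where $\gamma_i:=2\alpha_i+\beta_i$. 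Since $n_1^2+n_2^2\equiv 1\tmod{p}$, the numbers $n_1,n_2$ are not both divisible by $p$; assuming $p\nmid n_2$ we have $n_1^2-1\equiv -n_2^2\tmod{p}$, hence $(\gamma_1q_1)^2\equiv -2n_2^2\tmod{p}$, so $-2$ is a square modulo $p$ --- contradicting the choice of $p$. I expect the second case to be the genuine obstacle: the first is settled by a one-line parity argument, whereas the second requires organizing the discriminant/index computation, controlling the powers of $2$ so as to reach the identity $n_1^2+n_2^2-1=eN'^2$ exactly, and recognizing the residue condition on $p$ as the non-residue condition for $-2$ that makes the two congruences modulo $p$ incompatible.
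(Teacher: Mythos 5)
Your proposal is correct and follows the paper's own strategy: the inputs are Propositions \ref{prop:C-m} and \ref{prop:C1C2-m1m2}, Case 1 is the identical parity argument, and Case 2 ends, exactly as in the paper, with the contradiction that $-2$ would be a quadratic residue modulo the prime $p\equiv 5,7 \tmod 8$. The only difference is the bookkeeping in Case 2: the paper reduces the three equations modulo $p$ and runs an explicit kernel computation for the form $x^2-2y^2$ over $\mathbb{F}_p$, whereas you first derive the integral identity $c_1^2c_2^2(n_1^2+n_2^2-1)=eN^2$ from the discriminant/index formula for the full-rank sublattice $\Z C_1+\Z C_2\subset\NS(X)$ (using $\det\NS(X)=-e$) and then combine $n_1^2+n_2^2\equiv 1\tmod p$ with $(\gamma_iq_i)^2\equiv 2(n_i^2-1)\tmod p$. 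Since your $\gamma_iq_i$ is precisely the paper's substitution $c_i=2a_i+b_i$ and the kernel step is just the $\mathbb{F}_p$-shadow of the same determinant comparison, the two arguments are essentially equivalent; your packaging has the small advantage of avoiding the auxiliary parameter $\lambda$ and making the role of the N\'eron--Severi discriminant explicit.
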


\begin{proof}
We will show that the restrictions given in Prop.~\ref{prop:C-m} and Prop.~\ref{prop:C1C2-m1m2} cannot hold for two submaximal curves.
In fact, we will show that the following equations can never be satisfied by \textit{any} two ample line bundles $L_1$ and $L_2$ and two positive integers $m_1$ and $m_2$:
\begin{enumerate}\compact

\item[(i)] $L_1^2=m_1^2-1$ for $m_1>1$\,,
\item[(ii)] $L_2^2=m_2^2-1$ for $m_2>1$\,,
\item[(iii)] $L_1\cdot L_2=m_1m_2$\,.
\end{enumerate}
Note that this also includes the case $C^2=m^2-4$ from Prop.~\ref{prop:C-m}, because in this case $\O_X(C)$ is an even multiple of another line bundle and dividing the equation by $4$ leads to an equation of the form (i).

First we treat the more immediate case: $\End(X)=\Z[\sqrt{e}]$, where $e$ is a non-square positive integer.
Assume that there exist ample line bundles $L_1$ and $L_2$ satisfying (i) and (ii), with their numerical classes given by $L_i\equiv a_iL_0+b_iL_\infty\,$ for $i=1,2$.
Then $m_1$ and $m_2$ must be odd, since $L_i^2$ is even.
But the intersection number for any two line bundles on $X$ is even,
\be
(a_1L_0 + b_1L_\infty)\cdot(a_2L_0+b_2L_\infty)=2a_1a_2 -2eb_1b_2
\,,
\ee
and hence it can never equal $m_1m_2$.

Next we treat the more subtle case: $\End(X)=\Z[\tfrac12 + \tfrac12 \sqrt{e}]$, where $e$ is a non-square positive integer with $e\equiv 1$ modulo $4$, which has a prime factor $p$ with $p\equiv 5$ or $7$ modulo $8$.
The crucial idea in this case is to consider the three equations modulo $p$.
Assume that $L_1$ and $L_2$ are two line bundles satisfying (i)--(iii), with their numerical classes given by $L_i\equiv a_iL_0+b_iL_\infty\,$ for $i=1,2$.
If we consider the equations
\begin{enumerate}\compact
\item[(i)]   $2L_1^2=4a_1^2+4a_1b_1 +(1-e)b_1^2  = 2m_1^2-2$\,,
\item[(ii)]  $2L_2^2=4a_2^2+4a_2b_2 +(1-e)b_2^2  = 2m_2^2-2$\,,
\item[(iii)] $2L_1\cdot L_2 = 4a_1a_2 + 2a_1b_2 + 2a_2b_1 + (1-e)b_1b_2 = 2m_1m_2$\,,
\end{enumerate}
modulo $p$ and replace $2a_i+b_i$ by $c_i$ for $i=1,2$, then the equations can be expressed by bilinear forms over the finite field $\mathbb{F}_{p}$.
For (i) and (ii) we obtain
\begin{enumerate}
\item[(I)] $\qquad 	\left(
		\begin{array}{c}
            c_1 \\
            m_1
		\end{array}
      	\right)^T
      	\left(
         \begin{array}{cc}
            1 & 0  \\
            0 & -2
         \end{array}
		\right)
      	\left(
		\begin{array}{c}
            c_1    \\
            m_1
		\end{array}
		\right)=c_1^2-2m_1^2=-2$\,,
\item[(II)] $\qquad 	\left(
		\begin{array}{c}
            c_2 \\
            m_2
		\end{array}
      	\right)^T
      	\left(
         \begin{array}{cc}
            1 & 0  \\
            0 & -2
         \end{array}
		\right)
      	\left(
		\begin{array}{c}
            c_2    \\
            m_2
		\end{array}
		\right)=c_2^2-2m_2^2=-2$\,.
\end{enumerate}
It follows that $(c_i,m_i)\neq (0,0)\in \mathbb{F}_{p}^2$.
For equation (iii) we find that
\begin{enumerate}
\item[(III)] $\qquad
		\left(
		\begin{array}{c}
            c_1 \\
            m_1
		\end{array}
      	\right)^T
      	\left(
         \begin{array}{cc}
            1 & 0  \\
            0 & -2
         \end{array}
		\right)
      	\left(
		\begin{array}{c}
            c_2    \\
            m_2
		\end{array}
		\right)=\left(
		\begin{array}{c}
            c_1 \\
            -2m_1
		\end{array}
      	\right)^T
      	\cdot
      	\left(
		\begin{array}{c}
            c_2    \\
            m_2
		\end{array}
		\right)=0$
\end{enumerate}
and, therefore, we obtain
\be
		\left(
		\begin{array}{c}
            c_2    \\
            m_2
		\end{array}
		\right)\in \ker
		\left(
		\begin{array}{c}
            c_1 \\
            -2m_1
		\end{array}
      	\right)^T
		=\,
		\left\{\lambda\left(
		\begin{array}{c}
            -2m_1    \\
            c_1
		\end{array}
		\right)\,\vrule\, \lambda\in \mathbb F_p\right\}\,,
\ee
i.e., $c_2=-2m_1\lambda$ and $m_2=c_1\lambda$ for some $\lambda\in \mathbb{F}_{p}$.
Using (I) and (II) we obtain
\be
		-2=\left(
		\begin{array}{c}
            c_2 \\
            m_2
		\end{array}
      	\right)^T
      	\left(
         \begin{array}{cc}
            1 & 0  \\
            0 & -2
         \end{array}
		\right)
      	\left(
		\begin{array}{c}
            c_2    \\
            m_2
		\end{array}
		\right)=
		\lambda^2
		(4m_1^2-2c_1^2)
		=4\lambda^2\,.
\ee
This implies that $-2$ is a quadratic residue modulo $p$.
But as $p\equiv 5$ or $7$ modulo $8$,
this is impossible, and thus we arrive at a contradiction.
\end{proof}

As a consequence of Thm.~\ref{thm:one-submax-curve-lb} we observe:

\begin{corollary}\label{cor:linear-case}
Let $\End(X)$ be as in Thm.~\ref{thm:one-submax-curve-lb}.
Then for every ample $\R$-line bundle $L_\lambda$ with $\varepsilon(L_\lambda)<\sqrt{L_\lambda^2}$ the Seshadri function is given by a linear function in a neighborhood of $L_\lambda$.
\end{corollary}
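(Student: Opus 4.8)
The plan is to show that in a neighbourhood of $L_\lambda$ the Seshadri function coincides with the affine function $\ell_C$ attached to the \emph{unique} submaximal curve $C$ of $L_\lambda$, whose existence and uniqueness are furnished by Theorem~\ref{thm:one-submax-curve-lb}.

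First I would exhibit $C$. Since $\varepsilon(L_\lambda)$ is the infimum of the quotients $\ell_{C'}(\lambda)$ over all irreducible curves $C'$ through $0$, and this infimum is $<\sqrt{L_\lambda^2}$ by hypothesis, there is an irreducible curve $C$ with $\ell_C(\lambda)<\sqrt{L_\lambda^2}$; that is, $C$ is submaximal for $L_\lambda$, and by Theorem~\ref{thm:one-submax-curve-lb} it is the only such curve. Its submaximality interval $I_C=\{\,t:\ell_C(t)<\sqrt{L_t^2}\,\}$ contains $\lambda$, and it is genuinely an open interval: by the intersection matrices in~\eqnref{eq:intersection-matrices} the function $t\mapsto L_t^2$ is a downward parabola on $\mathcal N(X)$, so $\sqrt{L_t^2}$ is concave, $\ell_C-\sqrt{L_t^2}$ is convex, and its negativity locus is an open interval. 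Thus $I_C$ is a neighbourhood of $\lambda$.

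The heart of the matter is the claim that $\varepsilon(L_t)=\ell_C(t)$ for every $t\in I_C$, which at once gives that $\varepsilon$ is (affine) linear on the neighbourhood $I_C$ of $\lambda$. Since $C$ is one of the curves competing in the definition of $\varepsilon(L_t)$, we have $\varepsilon(L_t)\le\ell_C(t)$, and because $t\in I_C$ this is moreover $<\sqrt{L_t^2}$. Conversely, if $\varepsilon(L_t)<\ell_C(t)$ held for some $t\in I_C$, then, writing $\varepsilon(L_t)$ once more as an infimum over curves, there would be an irreducible curve $C'$ with $\ell_{C'}(t)<\ell_C(t)<\sqrt{L_t^2}$; then $C'$ would be submaximal for $L_t$ and distinct from $C$, so $L_t$ would have two submaximal curves, contradicting uniqueness. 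Hence $\varepsilon(L_t)=\ell_C(t)$ throughout $I_C$.

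The only point requiring care is that Theorem~\ref{thm:one-submax-curve-lb} is stated for ample \emph{line} bundles, whereas $L_t$ is in general only a real class. I would bridge this by a small perturbation: the condition $\ell_{C'}(t)<\sqrt{L_t^2}$ is open in $t$, so two curves submaximal for $L_t$ remain simultaneously submaximal for $L_{t'}$ with $t'$ in a small interval around $t$; taking $t'$ rational and replacing $L_{t'}$ by an integral multiple (which leaves all Seshadri quotients unchanged) puts us in the situation of the theorem and produces the contradiction. (Alternatively, one may note that the proof of Theorem~\ref{thm:one-submax-curve-lb}, through Propositions~\ref{prop:C-m} and~\ref{prop:C1C2-m1m2}, already excludes the existence of any two curves with the pertinent self- and mutual intersection numbers, regardless of the bundle for which they are submaximal.) Beyond this, the argument is entirely formal; concavity and continuity of $\varepsilon$ are not even needed here.
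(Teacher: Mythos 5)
Your argument is correct and follows essentially the same route as the paper: take the submaximal curve $C$, observe that $\varepsilon=\ell_C$ on the open interval $I_C\ni\lambda$ because a second submaximal curve for some class in $I_C$ would, by openness of submaximality, yield a nearby $\Q$-line bundle with two submaximal curves, contradicting Theorem~\ref{thm:one-submax-curve-lb}. Your explicit perturbation to rational classes is precisely the continuity step in the paper's proof, so there is nothing genuinely different here.
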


\begin{proof}
Let $L_\lambda$ be an ample $\R$-line bundle with $\varepsilon(L_\lambda)<\sqrt{L_\lambda^2}$ and let $C$ be any Seshadri curve of $L_\lambda$.
By the previous Thm.~\ref{thm:one-submax-curve-lb} the curve $C$ is the only submaximal curve for every $\Q$-line bundle in $I_C$.
Assume now that there exists an ample $\R$-line bundle $L_t\in I_C$ with two submaximal curves.
By continuity both curves remain submaximal in a neighborhood of $L_t$ and, thus, there also exist $\Q$-line bundles which also have two submaximal curves.
This, however, is impossible by Thm.~\ref{thm:one-submax-curve-lb}.
\end{proof}

We will see that the assumption $\varepsilon(L_\lambda)<\sqrt{L_\lambda^2}$ is essential for the validity of the statement in the corollary,
and in fact we will show that the local behavior in the remaining case $\varepsilon(L_\lambda)=\sqrt{L_\lambda^2}$ is surprisingly intricate (see Cor.~\ref{cor:upper-bound-case}).

Computer-assisted calculations suggest that Thm.~\ref{thm:one-submax-curve-lb} is in fact an \engqq{if and only if} statement, which means that in the remaining cases there should always exist a line bundle with two submaximal curves.
In Sect.~\ref{sec:2-submaximal} we will show how the existence of a line bundle with two submaximal curves can be verified using computer-assisted calculations.
Furthermore, we will provide a sequence of numbers $e_n$ with the property that there exists a line bundle with two submaximal curves on any abelian surface with $\End(X)=\Z[\tfrac12+\tfrac12\sqrt{e_n}]$. \\

Before we continue studying the local behavior of the Seshadri function in the case where line bundles can have two submaximal curves, we prove a useful relation between submaximality intervals and reducibility of effective divisors:

\begin{lemma}\label{lem:reducible-criterion}
Let $D$ be an effective divisor on $X$ which is submaximal for some ample line bundle.
If there exists another effective divisor $D'$ whose submaximality interval $I_{D'}$ satisfies $I_D\subsetneq I_{D'}$, then $D$ is reducible.
\end{lemma}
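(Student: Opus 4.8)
The plan is to argue by contraposition: if $D$ is irreducible, then $I_D$ is not properly contained in the submaximality interval of any effective divisor. So assume $D$ is irreducible, and write $D=aC$ with $C$ a prime divisor; since $\ell_D=\ell_C$, and hence $I_D=I_C$, we may assume from the outset that $D=C$ is a prime divisor (which is then again submaximal for some ample line bundle). Because $X$ is simple it contains no elliptic curve, so $C$ is non-elliptic and $\O_X(C)$ is ample; moreover, exactly as at the beginning of the proof of Prop.~\ref{prop:C-m} (via \cite[Prop.~1.2]{Bauer-Schulz}), the curve $C$ is submaximal for $\O_X(C)$ and in fact computes $\eps(\O_X(C))$.

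The decisive step is to exhibit an \emph{interior} point of $I_C$ at which $C$ itself computes the Seshadri constant. Let $L_\tau\in\mathcal N(X)$ be the representative of the ray spanned by $\O_X(C)$, say $\O_X(C)=cL_\tau$ with $c>0$. By homogeneity of $\eps$ and of $\ell_C$, the equality $\eps(\O_X(C))=\O_X(C)\cdot C/\mult_0 C$ rescales to $\eps(L_\tau)=\ell_C(\tau)$, and the submaximality of $C$ for $\O_X(C)$ rescales to $\ell_C(\tau)<\sqrt{L_\tau^2}$; thus $\tau\in I_C$ and $C$ computes $\eps(L_\tau)$. I would also use two standard facts: (a) for every effective divisor $D'$ (automatically with $\mult_0 D'>0$, since $I_{D'}$ is defined) and every $t$ one has $\eps(L_t)\le\ell_{D'}(t)$, because the Seshadri constant at a point is bounded above by $L\cdot D'/\mult_0 D'$ for any effective $D'$; and (b) since $t\mapsto\sqrt{L_t^2}$ is concave while each $\ell_D$ is affine, and $\sqrt{L_t^2}$ vanishes at the two endpoints of $\mathcal N(X)$ whereas $\ell_D$ is positive on $\mathcal N(X)$ (an effective divisor on a simple abelian surface is ample, hence meets every nonzero nef class positively), every nonempty $I_D$ is an open interval $(\alpha_D,\beta_D)$ with $\alpha_D<\beta_D$ in the interior of $\mathcal N(X)$, and $\ell_D$ agrees with $\sqrt{L_t^2}$ at $\alpha_D$ and at $\beta_D$.

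Now suppose $I_C\subsetneq I_{D'}$ for some effective $D'$, and write $\overline{I_C}=[\alpha,\beta]$ using (b). From $I_C\subseteq I_{D'}$ we get $\tau\in I_{D'}$ and $\alpha,\beta\in\overline{I_{D'}}$; combining (a) at $\tau$ with the description of the endpoints of $I_C$ in (b) yields $\ell_{D'}(\tau)\ge\eps(L_\tau)=\ell_C(\tau)$, while $\ell_{D'}(\alpha)\le\sqrt{L_\alpha^2}=\ell_C(\alpha)$ and $\ell_{D'}(\beta)\le\sqrt{L_\beta^2}=\ell_C(\beta)$ by continuity on $\overline{I_{D'}}$. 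Hence the affine function $\ell_{D'}-\ell_C$ is $\le0$ at the two distinct points $\alpha$ and $\beta$ but $\ge0$ at the intermediate point $\tau\in(\alpha,\beta)$, and therefore vanishes identically. Consequently $\ell_{D'}=\ell_C$ and $I_{D'}=I_C$, contradicting $I_C\subsetneq I_{D'}$; so $D$ must be reducible.

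The only place that calls for genuine input rather than bookkeeping is the second paragraph — producing the point $\tau$ and checking that $C$ computes $\eps(L_\tau)$ there. This is precisely where irreducibility enters: an irreducible curve submaximal for some ample bundle computes the Seshadri constant of its own polarization, a property that fails for a reducible $D$ in general. Once $\tau$ is available, the conclusion is just the elementary remark that an affine function which is dominated at two points by a concave function, yet dominates it at an intermediate point, must coincide with it there.
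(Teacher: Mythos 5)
Your proof is correct and follows essentially the same route as the paper's: both rest on the observation that $\ell_D$ is the chord of the concave bound $F(t)=\sqrt{L_t^2}$ over $I_D$, combined with the fact (via \cite[Prop.~1.2]{Bauer-Schulz}) that an irreducible curve which is submaximal for some ample bundle computes the Seshadri constant of its own polarization, so that the would-be irreducible $D$ computes $\eps(L_\tau)$ at an interior point $\tau\in I_D$ while $\ell_{D'}$ forbids this. The only (harmless) difference is cosmetic: where the paper invokes strict concavity to get $\ell_{D'}<\ell_D$ on the inner interval, you compare the affine function $\ell_{D'}-\ell_C$ at the two endpoints of $I_C$ and at $\tau$ and derive the contradiction $\ell_{D'}=\ell_C$, $I_{D'}=I_C$.
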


\begin{proof}
Let $I_{D}=(a,b)$ and $I_{D'}=(c,d)$ be the submaximality intervals of $D$ and $D'$, respectively.
Denoting by $F$ the general upper bound function $t\mapsto \sqrt{L_t^2}$,
the linear function $\ell_D$ is given by the straight line joining the points $(a,F(a))$ and $(b,F(b))$ and, respectively, $\ell_{D'}$ by joining the points $(c,F(c))$ and $(d,F(d))$.
Since $F$ is strictly concave, the linear function $\ell_{D'}$ is strictly smaller than $\ell_{D}$ in $I_{D'}$.
Therefore $D$ can never compute the Seshadri constant for any line bundle.
However, if $D$ were irreducible, then $D$ would compute its own Seshadri constant (see \cite[Prop. 1.2]{Bauer-Schulz}), which is a contradiction.
\end{proof}

By \cite[Prop.~1.8]{Szemberg}, the number of curves that can be submaximal for an individual ample line bundle $L_t$ is bounded.
We will now show that the number remains bounded even when all line bundles in an open neighborhood of $L_t$ are considered, provided that
$\varepsilon(L_t)<\sqrt{L_t^2}$. This is a consequence of the following lemma.

\begin{lemma}\label{lem:4-curves-submaxinterval}
Let $D$ be an effective divisor on $X$ which is submaximal for an ample line bundle $L_t$.
Then there exists at most four irreducible curves which are submaximal for some line bundles in $I_D$.

Moreover, if $D$ is irreducible, then there exists at most three irreducible curves which are submaximal for some line bundles in $I_D$.
\end{lemma}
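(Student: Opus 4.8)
The plan is to combine the numerical restrictions from Prop.~\ref{prop:C-m} and Prop.~\ref{prop:C1C2-m1m2} with the geometry of the submaximality interval $I_D$, exactly as in the proof of Lemma~\ref{lem:reducible-criterion}. Write $I_D=(a,b)$, let $F(t)=\sqrt{L_t^2}$ be the strictly concave general upper bound function, and recall that the linear function $\ell_D$ is the chord of the graph of $F$ over $[a,b]$. Suppose $C$ is an irreducible curve that is submaximal for some $L_s$ with $s\in I_D$; then $C$ computes $\eps(L_s)$, so $\ell_C(s)=\eps(L_s)\le\ell_D(s)$, and since $\ell_C$ is a chord of $F$ lying under $\ell_D$ at the interior point $s$, strict concavity of $F$ forces the submaximality interval $I_C$ to be \emph{wider} than $I_D$ on at least one side — more precisely $I_C\not\subset I_D$ unless $I_C=I_D$. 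The key structural input is therefore: any irreducible $C$ submaximal somewhere in $I_D$ has an endpoint of $I_C$ outside $[a,b]$ (on the left, on the right, or both), or $I_C=I_D$.

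Next I would bound how many such curves can share a given "overhang". Two irreducible curves $C_1,C_2$ that are both submaximal for the \emph{same} line bundle $L_t$ satisfy $C_1\cdot C_2=m_1m_2$ by Prop.~\ref{prop:C1C2-m1m2}; I want to argue that at most two curves can have submaximality intervals all containing a common sub-interval near the left endpoint $a$. Indeed, if $C_1,C_2,C_3$ all have $I_{C_j}\supset(a-\delta,a+\delta)$ for small $\delta$, pick $t$ in this common interval: all three are submaximal for $L_t$, contradicting Szemberg's bound $\rho(X)=2$ \cite[Prop.~1.8]{Szemberg}. Hence at most two curves "overhang on the left" of $a$ and at most two "overhang on the right" of $b$; if a curve overhangs on both sides its interval strictly contains $I_D$, but by Lemma~\ref{lem:reducible-criterion} this would make $D$ reducible, which is only excluded in the "moreover" part. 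This already gives at most four curves in general: two associated to the left endpoint, two to the right. For the "moreover" clause, when $D$ is itself irreducible, Lemma~\ref{lem:reducible-criterion} forbids any curve $C$ with $I_C\supsetneq I_D$, so every submaximal $C$ has an endpoint of $I_C$ in the \emph{closed} interval $[a,b]$; combined with the overhang count and the observation that $D$ itself occupies one of the (at most two) "left slots" or is $C$ itself when $I_C=I_D$, one squeezes the bound down to three. I would spell this bookkeeping out carefully: the at-most-two-left and at-most-two-right counts overlap in the curve(s) with $I_C=I_D$, and when $D$ is irreducible $D$ is one of these, eliminating one slot.

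The main obstacle I expect is the boundary bookkeeping: making precise "overhang on the left of $a$" so that the Szemberg bound really applies at a single point, and handling curves $C$ whose interval $I_C$ shares exactly the endpoint $a$ (or $b$) with $I_D$ but lies on the far side — these need the strict-concavity argument of Lemma~\ref{lem:reducible-criterion} to be pushed a little, showing $\ell_C>\ell_D$ on $I_D$ so $C$ cannot be submaximal for any $L_t$ with $t\in I_D$ at all, hence such $C$ simply do not count. Once one checks that a curve submaximal for \emph{some} $L_t$, $t\in I_D$, must have $I_C\cap I_D\ne\emptyset$ with $I_C$ properly protruding past $a$ or past $b$ (or $I_C=I_D$), the three/four count is a finite case analysis driven purely by the pigeonhole at the two endpoints. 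I would also remark that the passage from "$I_D$" to the set of curves submaximal "for some line bundle in $I_D$" uses that $I_D$ is open, so protrusion is detected at interior points of the protruding intervals where the $\rho(X)=2$ bound bites.
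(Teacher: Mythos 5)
Your strategy --- the dichotomy ``$I_C=I_D$ or $I_C$ protrudes past an endpoint of $I_D$'' (which indeed follows from Lemma~\ref{lem:reducible-criterion}) combined with Szemberg's bound applied at a common point near an endpoint --- is a genuinely different route from the paper's proof, which instead orders the representatives $t_i$ of the classes $\O_X(C_i)$, uses that $t_i\notin I_{C_j}$ for $i\neq j$, and traps the middle curve's interval inside $I_D$. However, as written your counting does not close, and the gap is exactly the case $I_C=I_D$. A curve with $I_C=I_D=(a,b)$ contains neither $a$ nor $b$ in its open submaximality interval, so it lies in \emph{neither} your ``left'' group nor your ``right'' group; your assertion that the two groups ``overlap in the curve(s) with $I_C=I_D$'' is backwards (the curves counted twice are those with $I_C\supsetneq I_D$). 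Hence your tally only gives ``at most four \emph{plus} the curves with $I_C=I_D$'', and the reduction to three in the irreducible case also fails, because its key step --- ``$D$ itself occupies one of the left slots'' --- is false: $\ell_D(a)=\sqrt{L_a^2}$ and $\ell_D(b)=\sqrt{L_b^2}$, so $D$ is never submaximal at an endpoint and never sits in an endpoint slot. A secondary inaccuracy: a curve submaximal for $L_s$ need not compute $\eps(L_s)$, so the opening claim $\ell_C(s)=\eps(L_s)\le\ell_D(s)$ is unjustified; fortunately the dichotomy you want follows directly from Lemma~\ref{lem:reducible-criterion} without any concavity argument.

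The gap is repairable inside your framework, but it needs an argument you did not make. First, at most one irreducible curve can have $I_C=I_D$: two such curves would satisfy $\ell_{C_1}=\ell_{C_2}$ (both are the chord of $F$ over $[a,b]$), hence have proportional numerical classes, which contradicts $C_1\cdot C_2=m_1m_2$ from Prop.~\ref{prop:C1C2-m1m2} together with $C_i^2-m_i^2\in\{-1,-4\}$ from Prop.~\ref{prop:C-m}. Second, if such a curve $C_0$ exists, apply Szemberg's bound at a point just inside $a$ (resp.\ just inside $b$): there $C_0$ and every left-overhanging (resp.\ right-overhanging) curve are simultaneously submaximal, so each endpoint then carries at most \emph{one} overhanging curve. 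This gives $\le 2+2=4$ when no curve has $I_C=I_D$ and $\le 1+1+1=3$ when one does; and when $D$ is irreducible, $D$ itself is that unique curve with $I_C=I_D$, yielding the bound three. Until these cases are handled, the proposal does not prove the lemma.
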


\begin{proof}
Assume, there exists five pairwise distinct irreducible curves $C_1,...,C_5$ which are submaximal for some line bundles in $I_D=(a,b)$.
Let $I_{C_i}=(a_i,b_i)$ be the submaximality interval of $C_i$ and let $L_{t_i}\in \mathcal N(X)$ be the unique representative of $\O_X(C_i)$.
We will show that the submaximality interval of $C_3$ is contained in $I_D$, which by Lemma~\ref{lem:reducible-criterion} would imply that $C_3$ is reducible.

Since $C_i$ is submaximal for some ample line bundle, $C_i$ is submaximal for $\O_X(C_i)$ by \cite[Prop. 1.2]{Bauer-Schulz}.
Therefore, $C_i$ is submaximal for $L_{t_i}$ and, thus, $t_i\in I_{C_i}$.
Moreover, since $C_i$ is the only submaximal curve for $\O_X(C_i)$, we have $t_i\notin I_{C_j}$ for $i\neq j$.
By assuming $t_1<t_2<t_3<t_4<t_5$ we deduce for $i=2,3,4$ that
$$
(a_i,b_i)\subset (t_{i-1},t_{i+1}) \qquad \mbox{ and }\qquad t_i\in (b_{i-1},a_{i+1})\,. \eqno(\ast)
$$
The submaximality intervals $(a_1,b_1)$ and $(a_5,b_5)$ have to intersect with $(a,b)$, because by assumption $C_1$ and $C_5$ are submaximal for some line bundles in $I_D$ and, thus, we have $a<b_1$ and $a_5<b$.
Furthermore, $(\ast)$ implies that $t_2,t_3,t_4\in (b_1,a_5)$ and, as a consequence, the interval $(t_{2},t_{4})$ is contained in $(b_1,a_5)$ and, therefore, in $I_D$.
Since $(a_3,b_3)$ is contained in $(t_{2},t_{4})$, it is also contained in $I_D$.
This, however, implies that $C_3$ is reducible by Lemma~\ref{lem:reducible-criterion}, which is a contradiction.

For the second statement, we assume there exists three irreducible curves.
Using the same notation and arguments as above, it follows that $t_2\in (b_1,a_3)$, $a<b_1$, and $a_3<b$.
Hence, we have $t_2\in  (b_1,a_3)\subset I_D$.
But this means that $L_{t_2}$ has $C_2$ and $D$ as submaximal curves, which is a contradiction.
\end{proof}

Hence, we conclude for the local structure of the Seshadri function:

\begin{corollary}
For every ample $\R$-line bundle $L_t$ with $\varepsilon(L_t)<\sqrt{L_t^2}$ the Seshadri function is locally a piecewise linear function, i.e., it is locally the minimum of at most two linear functions.
\end{corollary}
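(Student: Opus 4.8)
The plan is to leverage Lemma~\ref{lem:4-curves-submaxinterval}, which reduces the a priori infinite supply of competing curves to a finite list, and then to prune that list down to two using Szemberg's bound \cite[Prop.~1.8]{Szemberg} together with the continuity of the Seshadri function. Concretely, let $L_t$ be an ample $\R$-line bundle with $\varepsilon(L_t)<\sqrt{L_t^2}$. By hypothesis $L_t$ has a Seshadri curve $C$, which is irreducible and, since $\ell_C(t)=\varepsilon(L_t)<\sqrt{L_t^2}$, submaximal for $L_t$; hence $t\in I_C$ and $I_C$ is a nonempty open interval. Applying the second part of Lemma~\ref{lem:4-curves-submaxinterval} to the irreducible divisor $C$ yields at most three irreducible curves $C_1=C,C_2,C_3$ that are submaximal for some line bundle in $I_C$, and these are the only irreducible curves that can realize a submaximal Seshadri quotient for any bundle in $I_C$.

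Next I would localize. Since $\varepsilon$ and the general upper bound function $F(s)=\sqrt{L_s^2}$ are continuous and $\varepsilon(L_t)<F(t)$, there is an open subinterval $J\ni t$ with $J\subset I_C$ on which $\varepsilon<F$ holds throughout. For each $s\in J$ we then have $\varepsilon(L_s)<\sqrt{L_s^2}$, so $L_s$ has a Seshadri curve; that curve is submaximal for $L_s$ and, since $s\in I_C$, it is one of $C_1,C_2,C_3$. Combined with the trivial bound $\varepsilon(L_s)\le\ell_{C_i}(s)$, valid for every $i$ and every $s$ by the definition of the Seshadri constant as an infimum, this gives
$$
\varepsilon(L_s)=\min\bigl(\ell_{C_1}(s),\ell_{C_2}(s),\ell_{C_3}(s)\bigr)\qquad\mbox{for all }s\in J\,,
$$
so $\varepsilon|_J$ is already a minimum of at most three linear functions.

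Finally I would discard one of the three curves. By \cite[Prop.~1.8]{Szemberg}, at most $\rho(X)=2$ of $C_1,C_2,C_3$ can be submaximal for the single bundle $L_t$; as $C_1$ is, after relabelling $C_3$ is not, so that $\ell_{C_3}(t)\ge F(t)>\varepsilon(L_t)$. Continuity then provides an open subinterval $J'\ni t$ with $J'\subset J$ on which $\ell_{C_3}>\varepsilon$, and there the third term in the displayed minimum is redundant; hence $\varepsilon|_{J'}=\min(\ell_{C_1},\ell_{C_2})|_{J'}$, a minimum of at most two linear functions (and if $I_C$ supports only one or two submaximal curves to begin with, the corresponding steps are simply omitted). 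I do not expect a genuine obstacle beyond Lemma~\ref{lem:4-curves-submaxinterval}; the one point demanding care is the localization step: one must shrink to a neighborhood on which $\varepsilon<F$ before invoking the existence of Seshadri curves, since that existence can fail exactly on the locus $\varepsilon=\sqrt{L^2}$ excluded by hypothesis --- which, as indicated after Cor.~\ref{cor:linear-case}, is precisely where the Seshadri function ceases to be locally piecewise linear.
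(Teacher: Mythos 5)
Your proof is correct and follows essentially the route the paper intends (the corollary is stated there without a separate proof, as the immediate consequence of Lemma~\ref{lem:4-curves-submaxinterval} combined with Szemberg's pointwise bound): the lemma limits the curves that can be submaximal anywhere on $I_C$ to at most three, and Szemberg's bound at $t$ together with continuity of $\varepsilon$ and of the linear functions discards one of them on a neighborhood of $t$. As a minor simplification, you do not need to invoke the existence of a Seshadri curve for the real classes $L_s$: for $s\in I_C$ every irreducible curve $C'$ outside $\{C_1,C_2,C_3\}$ satisfies $\ell_{C'}(s)\ge\sqrt{L_s^2}>\ell_{C_1}(s)$, so $\varepsilon(L_s)=\min_i\ell_{C_i}(s)$ holds directly from the definition of the Seshadri constant as an infimum.
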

As before, the assumption $\varepsilon(L_t)<\sqrt{L_t^2}$ is essential for this statement to be true (see Remark~\ref{rem:border-points}).

Clearly, if a line bundle $L$ has two submaximal curves, then there exists a neighborhood of $L$ such that every line bundle has two submaximal curves, since any submaximal curve will remain submaximal in a neighborhood of $L$.
On the other hand, we show that every submaximal curve gives rise to an open interval, in which it is the only submaximal curve:
\begin{proposition}\label{prop:sesh-curve-1-submaximal}
Let $C\equiv qL_0+pL_\infty$ be an irreducible curve that is submaximal for some ample line bundle $L$ on $X$.
Then there exists a neighborhood $U$ of $L_{\frac{p}{q}}$ in $\mathcal N(X)$ such that $C$ is the only submaximal curve for all line bundles in $U$.
In particular, the Seshadri function coincides with $\ell_C$ in $U$.
\end{proposition}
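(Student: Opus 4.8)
The plan is to pin down the point $t_0:=p/q\in\mathcal N(X)$, which represents the class of $\O_X(C)$, and to show that in a small interval around $t_0$ no second submaximal curve can survive. First I would record that $\O_X(C)$ represents the same nef ray as $L_{t_0}$ and is ample, since $X$ is simple and hence contains no elliptic curve (cf.\ the proof of Prop.~\ref{prop:C-m}); by \cite[Prop.~1.2]{Bauer-Schulz} the curve $C$ is then submaximal for $\O_X(C)$ and in fact computes $\varepsilon(\O_X(C))$, so $t_0$ lies in the open submaximality interval $I_C$. Next, Lemma~\ref{lem:4-curves-submaxinterval} applied to $D=C$ shows that only finitely many irreducible curves --- at most three, one of which is $C$ --- are submaximal for line bundles in $I_C$; call this finite set $\mathcal S$.

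The crux is to rule out each $C'\in\mathcal S\setminus\{C\}$ near $t_0$. Since $C'\in\mathcal S$, there is some $s\in I_{C'}\cap I_C$, so $C$ and $C'$ are simultaneously submaximal for $L_s$; Proposition~\ref{prop:C1C2-m1m2} then forces $C\cdot C'=mm'$, with $m=\mult_0C$ and $m'=\mult_0C'$. Using $L_{t_0}=\tfrac1q\O_X(C)$ in $\NS_\Q(X)$, a short computation gives $\ell_{C'}(t_0)=\frac{C\cdot C'}{qm'}=\frac mq$, whereas $\sqrt{L_{t_0}^2}=\frac{\sqrt{C^2}}{q}$. By Proposition~\ref{prop:C-m} we have $C^2-m^2\in\{-1,-4\}$, so $\sqrt{C^2}<m$ and hence $\ell_{C'}(t_0)>\sqrt{L_{t_0}^2}$. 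By continuity of $t\mapsto\ell_{C'}(t)-\sqrt{L_t^2}$ there is therefore an open neighborhood $V_{C'}$ of $t_0$ on which this difference stays positive, i.e.\ with $V_{C'}\cap I_{C'}=\emptyset$.

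Finally I would take $U:=I_C\cap\bigcap_{C'\in\mathcal S\setminus\{C\}}V_{C'}$, a finite intersection of open neighborhoods of $t_0$. On $U$ the curve $C$ is submaximal; and if some irreducible $\tilde C$ is submaximal for $L_t$ with $t\in U$, then $t\in I_C$ gives $\tilde C\in\mathcal S$, and if $\tilde C\ne C$ we would have $t\in I_{\tilde C}\cap V_{\tilde C}=\emptyset$, which is absurd --- so $\tilde C=C$. Thus $C$ is the only submaximal curve on $U$, and since $\varepsilon(L_t)<\sqrt{L_t^2}$ there is then computed by an irreducible curve, necessarily $C$, the Seshadri function coincides with $\ell_C$ on $U$. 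I expect the main obstacle to be exactly the evaluation at $t_0$: it is precisely because $t_0$ represents $\O_X(C)$ that the identity $C\cdot C'=mm'$ together with $C^2<m^2$ yields a \emph{strict} violation of submaximality of the competitor there; this rigidity is present only at that one point, so one really has to test there and then use continuity together with the finiteness from Lemma~\ref{lem:4-curves-submaxinterval} to spread it over a neighborhood.
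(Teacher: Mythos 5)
Your argument is correct, but it handles the decisive step differently from the paper. Both proofs start the same way: by \cite[Prop.~1.2]{Bauer-Schulz} the curve $C$ computes $\varepsilon(\O_X(C))$, so $t_0=p/q$ lies in $I_C$, and Lemma~\ref{lem:4-curves-submaxinterval} reduces everything to finitely many potential competitors that are submaximal somewhere in $I_C$. The paper then argues that if no neighborhood as claimed existed, one of these competitors $C'$ would have to satisfy the borderline equality $\O_X(C)\cdot C'/\mult_0(C')=\sqrt{C^2}$ at $t_0$, and it excludes this by quoting \cite[Lemma~5.2]{Bauer:sesh-alg-sf} (any such curve is a component of $C$) — the same lemma it also uses to note that $C$ is the only submaximal curve for $\O_X(C)$. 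You instead stay inside the paper's own toolkit: for each competitor $C'$ you pick a common point of $I_C\cap I_{C'}$, apply Prop.~\ref{prop:C1C2-m1m2} to get $C\cdot C'=mm'$, and combine this with $C^2-m^2\in\{-1,-4\}$ from Prop.~\ref{prop:C-m} to obtain the \emph{strict} inequality $\ell_{C'}(t_0)=m/q>\sqrt{C^2}/q=\sqrt{L_{t_0}^2}$, so the borderline case never arises and continuity plus finiteness give the neighborhood. This buys a more self-contained proof (no second appeal to the external Lemma~5.2) and makes the rigidity at $t_0$ explicit; the paper's route is shorter because the quoted lemma disposes of the weak inequality in one stroke. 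Two small points you should make explicit: since Prop.~\ref{prop:C1C2-m1m2} is stated for an ample line bundle, choose the common parameter $s$ rational (possible as $I_C\cap I_{C'}$ is open) and clear denominators before applying it; and for the final sentence, for irrational $t$ it is cleaner to argue that every irreducible curve other than $C$ has Seshadri quotient $\ge\sqrt{L_t^2}>\ell_C(t)$, so the infimum defining $\varepsilon(L_t)$ equals $\ell_C(t)$ without invoking that the constant is computed by a curve.
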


\begin{proof}
Since $C$ is submaximal for some ample line bundle $L$, we know that $C$ is also submaximal for $\O_X(C)$ by \cite[Prop. 1.2]{Bauer-Schulz}, and
in fact $C$ is the only submaximal curve for $\O_X(C)$, since every $\O_X(C)$-submaximal curve has to be a component of $C$ by \cite[Lemma~5.2]{Bauer:sesh-alg-sf}.
Thus, $C$ is the only submaximal curve for $L_{\frac{p}{q}}$.
Applying Lemma.~\ref{lem:4-curves-submaxinterval}, there exist at most two other curves, which are submaximal for some line bundle $L'\in I_C$.
Thus, the only possibility in which no such neighborhood of $L_\frac{p}{q}$ exists, is the case where one of the other curves $C'$ satisfies $\O_X(C)\cdot C'/\mult_0(C')=\sqrt{C^2}$.
This, however, implies that $C'$ is a component of $C$ by \cite[Lemma~5.2]{Bauer:sesh-alg-sf}.
\end{proof}

\section{Seshadri function on abelian surfaces with real multiplication}\label{sec:abelian-surfaces-real-mult}

In this section we will develop a method to algorithmically compute the Seshadri constant for any ample $\Q$-line bundle on $X$, proving Theorem~\ref{introthm:algorithm} stated in the introduction.
Furthermore, we will see that the local structure of the Seshadri function has unexpected behavior at $L_\lambda$ if $\varepsilon(L_\lambda)=\sqrt{L_\lambda^2}$.
Our strategy is to make use of Pell divisors in such a way that it is not necessary to explicitly know their multiplicity, but to use their \textit{expected multiplicity} given by the Pell solution.

\begin{definition}\label{def:pell-bound}
Let $L_\lambda$ be an ample $\Q$-line bundle with $\sqrt{L_\lambda^2}\notin\Q$ and let $q\in \N$ be the unique integer such that $qL_\lambda$ is a primitive $\Z$-line bundle, i.e., $q$ is the denominator of a coprime representation of $\lambda=\frac{p}{q}$.
Denote by $(l,k)$ the primitive solution of the Pell equation $x^2-(qL_\lambda)^2 y^2=1$.
We call
\be
\pi_\lambda:\R\to\R,\qquad t\mapsto \frac{kqL_\lambda\cdot L_t}{l}
\ee
the \textit{Pell bound} at $L_\lambda$, and \be
J_\lambda=\{t\in\mathcal N(X)\,\,\vrule\,\, \pi_\lambda(t)<\sqrt{L_t^2}\}
\ee
the \textit{submaximality interval} of $\pi_\lambda$.
\end{definition}
So if $\sqrt{L_\lambda^2}\notin\Q$ and $P$ is a Pell divisor of $L_\lambda$, then we have the following chain of inequalities:
\be
     \eps(L_\lambda) \le \frac{L_\lambda\cdot P}{\mult_0P}
             \le \pi_\lambda(\lambda)
             < \sqrt{L_\lambda^2}\,.
\ee
Moreover,
$\pi_\lambda$ is an upper bound for the Seshadri function in the submaximality interval~$J_\lambda$:
\be
   \eps(L_t)\le \frac{L_t\cdot P}{\mult_0P} \le \pi_\lambda(t)              < \sqrt{L_\lambda^2} \righttext{for all $t\in J_\lambda$.}
\ee

We will now establish two important connections between submaximal curves and Pell bounds.
First we prove that every submaximal curve has a unique representative in the set of Pell bounds.
Secondly, we will exhibit a relation between the submaximality interval of a Seshadri curve $C$ of $L_\lambda$ and the submaximality interval of the Pell bound $\pi_\lambda$.

\begin{proposition}\label{prop:irred-pell-bound}
Let $L_\lambda$ be an ample $\Q$-line bundle with $\sqrt{L_\lambda^2}\notin\Q$ and let $C\equiv qL_0+pL_\infty$ be an irreducible curve that is submaximal for some ample line bundle $L$ on $X$.
Then the following are equivalent:
\begin{enumerate}\compact
\item[\rm(i)] Either $C$ or $2C$ is the unique Pell divisor of $L_\lambda$.
\item[\rm(ii)] The linear functions $\ell_C$ and $\pi_\lambda$ coincide.
\item[\rm(iii)] We have $\lambda=\frac pq$.
\end{enumerate}
\end{proposition}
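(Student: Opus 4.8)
The plan is to prove the equivalence by showing $(iii)\Rightarrow(i)\Rightarrow(ii)\Rightarrow(iii)$, exploiting the uniqueness part of Prop.~\ref{prop:C-m} and the fact that $C$ computes $\eps(\O_X(C))$.

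First I would establish $(iii)\Rightarrow(i)$. Assume $\lambda=\frac pq$, so that $L_\lambda$ and $\O_X(C)=C$ lie on the same ray in $\Nef(X)$; writing $C\equiv qL_0+pL_\infty$, the primitive line bundle underlying $L_\lambda$ is exactly the primitive bundle $M$ with $\O_X(C)=pM$ (in Prop.~\ref{prop:C-m}'s notation), up to the factor $q$. Since $C$ is submaximal for some ample bundle, $C$ is submaximal for $\O_X(C)$ and computes $\eps(\O_X(C))$, so Prop.~\ref{prop:C-m} applies: in the case $C^2-m^2=-1$ the divisor $2C$ is the unique Pell divisor of $M$, hence of $L_\lambda$; in the case $C^2-m^2=-4$ the curve $C$ itself is the unique Pell divisor of $M$, hence of $L_\lambda$. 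Note $\sqrt{M^2}$ is irrational by Prop.~\ref{prop:C-m}, consistent with the hypothesis $\sqrt{L_\lambda^2}\notin\Q$. This gives (i). The slightly delicate point here is bookkeeping the scalars: matching the denominator $q$ from Def.~\ref{def:pell-bound} with the multiplicities $(\ell_0,k_0)$ or $(2\ell_0,2k_0)$ from Prop.~\ref{prop:C-m}, and checking that $qL_\lambda$ being primitive forces $q$ to be the $p$ of Prop.~\ref{prop:C-m}. I expect this to be the main obstacle — it is not conceptually hard, but requires care about which bundle is primitive and keeping the factors of $2$ straight in the $C^2-m^2=-4$ case.

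Next, $(i)\Rightarrow(ii)$. If $C$ (resp.\ $2C$) is the unique Pell divisor of $L_\lambda$, then by Prop.~\ref{prop:C-m} the Pell solution $(l,k)$ of $x^2-(qL_\lambda)^2y^2=1$ equals $(m,p)$ (resp.\ is obtained from $(m,p)=(2\ell_0,2k_0)$ after dividing by $2$), so that $\mult_0 P = 2l$ and $\O_X(P)=2kqL_\lambda$ with $P=2C$ or $P=C$ accordingly. In either case a direct computation gives $\pi_\lambda(t)=\dfrac{kq\,L_\lambda\cdot L_t}{l}=\dfrac{\O_X(C)\cdot L_t}{\mult_0 C}=\ell_C(t)$ for all $t$, since $\O_X(P)/\mult_0 P$ equals $\O_X(C)/\mult_0 C$ as a $\Q$-divisor class divided by a number. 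Thus the two linear functions coincide.

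Finally, $(ii)\Rightarrow(iii)$. Suppose $\ell_C=\pi_\lambda$ as linear functions on $\R$. Both are of the form $t\mapsto \alpha\,(L_0+tL_\infty)$ for a positive $\Q$-class, namely $\ell_C$ corresponds to the ray of $\O_X(C)$ and $\pi_\lambda$ to the ray of $L_\lambda$; equality of the linear functions forces these rays to coincide, hence $\O_X(C)$ and $L_\lambda$ are positive multiples of one another. Since $C\equiv qL_0+pL_\infty$, its normalized representative in $\mathcal N(X)$ is $L_{p/q}$, and the normalized representative of $L_\lambda$ is $L_\lambda$; therefore $\lambda=\frac pq$. (Here one uses that $L_0$ and $L_\infty$ are linearly independent in $\NS_\R(X)$, so a linear function $t\mapsto (aL_0+bL_\infty)\cdot L_t$ determines the pair $(a,b)$ up to scaling, and in particular determines the ratio $b/a$.) This closes the cycle of implications and completes the proof. $\Box$
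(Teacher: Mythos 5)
Your proposal is correct, and the implications (iii)$\Rightarrow$(i)$\Rightarrow$(ii) are handled exactly as in the paper, namely as direct consequences of Prop.~\ref{prop:C-m} (the paper records them in one sentence; your only slip is notational: in (i)$\Rightarrow$(ii) the $q$ of Def.~\ref{def:pell-bound} is the denominator of $\lambda$, which differs from the coefficient $q$ of $C$ when $C$ is not primitive, but since $\O_X(C)$ is in any case a multiple of the primitive bundle underlying $L_\lambda$, your matching of $k/l$ with $\mult_0 C$ via Prop.~\ref{prop:C-m} goes through unchanged). Where you genuinely diverge from the paper is in (ii)$\Rightarrow$(iii). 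The paper argues geometrically: it uses Prop.~\ref{prop:sesh-curve-1-submaximal} to see that $\ell_C$ computes the Seshadri function near $L_{p/q}$, deduces $\ell_P=\ell_C$ for any Pell divisor $P$ of $L_\lambda$, shows that every component of $P$ defines the same linear function, and then invokes the uniqueness of the submaximal curve for $\O_X(C)$ (\cite[Lemma~5.2]{Bauer:sesh-alg-sf}) to conclude $P=kC$, whence $L_\lambda$ and $L_{p/q}$ lie on the same ray. You instead note that $\ell_C$ and $\pi_\lambda$ are affine functions of $t$ whose two coefficients are the intersection numbers of $C/\mult_0 C$, respectively of the relevant multiple of $L_\lambda$, with the basis $(L_0,L_\infty)$; equality of the functions together with non-degeneracy of the intersection form (the matrices in \eqnref{eq:intersection-matrices} have determinant $-4e$, resp.\ $-e$) forces equality of the two classes in $\NS_\R(X)$, so the rays coincide and $\lambda=\tfrac pq$. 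This is a purely numerical, strictly shorter argument that needs no information about components of Pell divisors or about uniqueness of submaximal curves, and it even shows that agreement of $\ell_C$ and $\pi_\lambda$ at two values of $t$ suffices; what the paper's longer route yields in addition --- that the Pell divisor of $L_\lambda$ is a multiple of $C$ --- is already contained in the equivalence (i)$\Leftrightarrow$(iii), so nothing essential is lost by your approach.
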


\begin{proof}
The equivalence of (i) and (iii) is an immediate consequence of Prop.~\ref{prop:C-m}.
Furthermore, the implication (i) $\Rightarrow$ (ii) also follows from Prop.~\ref{prop:C-m}, since it shows that the multiplicity of $C$ coincides with the expected multiplicity given by the Pell solution,
and therefore the linear functions $\ell_C$ and $\pi_\frac{p}{q}$ coincide.

For the implication (ii) $\Rightarrow$ (iii) we have to show that $\ell_C=\pi_\lambda$ implies $\lambda=\frac{p}{q}$.
By Prop.~\ref{prop:sesh-curve-1-submaximal} the linear function $\ell_C$ coincides with the Seshadri function in an open neighborhood $U$ of $L_\frac{p}{q}$.
For any Pell divisor $P$ of $L_\lambda$ we have
\be
\varepsilon(L_t)=\ell_C(t)\leq \ell_P(t)\leq \pi_\lambda(t)\qquad \mbox{ for all } t\in U\,,
\ee
and hence the linear function $\ell_P$ coincides with $\ell_C$, since by assumption $\pi_\lambda=\ell_C$.

We claim that for every component $C'$ of $P$ the linear functions $\ell_{C'}$ and $\ell_C$ also coincide.
For this, assume that there exists a $t_0\in U$ such that $\ell_C(t_0)<\ell_{C'}(t_0)$.
Then, upon writing $P=C'+R$, we have
\be
\frac{C\cdot L_{t_0}}{\mult_0 C}=\ell_C({t_0})=\ell_P({t_0})=\frac{(C'+R)\cdot L_{t_0}}{\mult_0 C' + \mult_0 R}\,.
\ee
This, however, implies that
\be
\frac{C\cdot L_{t_0}}{\mult_0 C}>\frac{R\cdot L_{t_0}}{\mult_0 R}\,,
\ee
which is impossible, since $C$ computes the Seshadri constant $\varepsilon(L_{t_0})$.

So we have shown that $C$ and any component $C'$ of the Pell divisor $P$ define the same linear function.
This means, in particular, that any component $C'$ of $P$ is also submaximal for the line bundle $\O_X(C)$.
But $\O_X(C)$ has only $C$ as a submaximal curve by \cite[Lem.~5.2]{Bauer:sesh-alg-sf}, and, therefore, $P=kC$ for $k\in \N$.
This implies that $L_\frac{p}{q}$ and $L_\lambda$ are rational multiples of each other.
But in $\mathcal N(X)$ this is only possible if $\lambda=\frac{p}{q}$.
\end{proof}

\begin{proposition}\label{prop:seshadri-submaximal-interval}
Let $L_\lambda$ be an ample $\Q$-line bundle with $\sqrt{L_\lambda^2}\notin\Q$ and let $J_\lambda=(t_1,t_2)$ be the submaximality interval of the Pell bound $\pi_\lambda$.
Then every Seshadri curve $C$ of $L_\lambda$ is submaximal on $(t_1,\lambda)$ or on $(\lambda,t_2)$.
\end{proposition}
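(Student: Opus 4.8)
The plan is to argue by contradiction, combining the reducibility criterion of Lemma~\ref{lem:reducible-criterion} with the fact that a Pell divisor of $L_\lambda$ realizes the linear bound $\pi_\lambda$ up to the inequality on its multiplicity at the origin.

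First I would fix a Pell divisor $P$ of $L_\lambda$, which exists since $\sqrt{L_\lambda^2}\notin\Q$, and note that, writing $(l,k)$ for the primitive solution of $x^2-(qL_\lambda)^2y^2=1$ as in Definition~\ref{def:pell-bound}, one has $P\equiv 2k\,qL_\lambda$ and $\mult_0 P\ge 2l$. Since $L_t$ is nef for every $t\in\mathcal N(X)$, this gives $\ell_P(t)=\frac{P\cdot L_t}{\mult_0 P}\le\frac{2k\,qL_\lambda\cdot L_t}{2l}=\pi_\lambda(t)$ on all of $\mathcal N(X)$. Comparing both sides with the strictly concave function $F(t)=\sqrt{L_t^2}$, this inequality yields the inclusion $J_\lambda\subseteq I_P$ of submaximality intervals.

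Next I would use that, by the chain $\eps(L_\lambda)\le\pi_\lambda(\lambda)<\sqrt{L_\lambda^2}$ recorded after Definition~\ref{def:pell-bound}, any Seshadri curve $C$ of $L_\lambda$ --- irreducible by definition, with $\ell_C(\lambda)=\eps(L_\lambda)$ --- satisfies $\lambda\in I_C$, and likewise $\lambda\in J_\lambda$. Since $\ell_C$ is linear and $F$ strictly concave, $I_C$ is an open subinterval of $\mathcal N(X)$; write $I_C=(a,b)$ and $J_\lambda=(t_1,t_2)$, so $a<\lambda<b$ and $t_1<\lambda<t_2$. Now assume the conclusion fails, i.e. $(t_1,\lambda)\not\subseteq I_C$ and $(\lambda,t_2)\not\subseteq I_C$. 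Because $b>\lambda$, the first gives $a>t_1$; because $a<\lambda$, the second gives $b<t_2$. Hence $I_C=(a,b)\subsetneq(t_1,t_2)=J_\lambda\subseteq I_P$, so $I_C\subsetneq I_P$ with $P$ effective. Since $C$ is also submaximal for $L_\lambda$, Lemma~\ref{lem:reducible-criterion} forces $C$ to be reducible, contradicting its irreducibility; this contradiction proves the proposition.

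The individual steps are all very short, and I expect the only real bookkeeping to be in the last paragraph: the translation between the geometric statement ``$C$ is submaximal on the half-interval $(t_1,\lambda)$'' and the inclusion of the open intervals $I_C$, $J_\lambda$, keeping careful track that both contain $\lambda$ so that failure on each side pins down one endpoint of $I_C$. There is no structural obstacle: the one substantive input is the existence of a Pell divisor of $L_\lambda$ with class $2k\,qL_\lambda$ and $\mult_0\ge 2l$ --- exactly the data encoded in $\pi_\lambda$ --- which makes the inequality $\ell_P\le\pi_\lambda$, hence $J_\lambda\subseteq I_P$, immediate.
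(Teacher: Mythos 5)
Your argument is correct, but it takes a genuinely different route from the paper's. You prove the statement by contradiction: from $\ell_P\le\pi_\lambda$ on $\mathcal N(X)$ (an inequality the paper records right after Definition~\ref{def:pell-bound}) you get $J_\lambda\subseteq I_P$ for a Pell divisor $P$ of $L_\lambda$; then, since $\lambda\in I_C$, failure of submaximality on both half-intervals pins $I_C=(a,b)$ strictly inside $J_\lambda$, so $I_C\subsetneq I_P$ and Lemma~\ref{lem:reducible-criterion} would make the Seshadri curve $C$ reducible, which is absurd. The paper instead argues directly with slopes: if $C$ is not submaximal on $(\lambda,t_2)$, then $\ell_C(t_2)>\sqrt{L_{t_2}^2}=\pi_\lambda(t_2)$, while $\ell_C(\lambda)=\eps(L_\lambda)\le\pi_\lambda(\lambda)$; hence $\ell_C$ has strictly larger slope than $\pi_\lambda$, so $\ell_C(t_1)<\pi_\lambda(t_1)=\sqrt{L_{t_1}^2}$, and strict concavity of $t\mapsto\sqrt{L_t^2}$ then gives submaximality of $C$ on all of $(t_1,\lambda)$. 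Both proofs rest on the same input, namely that a Pell divisor makes $\pi_\lambda$ an upper bound at $\lambda$; yours additionally uses the irreducibility of $C$ and the reducibility criterion, which makes it slightly heavier but yields the clean intermediate inclusion $J_\lambda\subseteq I_P$, whereas the paper's slope comparison is more elementary and self-contained, needing only the single inequality $\eps(L_\lambda)\le\pi_\lambda(\lambda)$ together with concavity. Your interval bookkeeping (failure on each side forcing $t_1<a$ and $b<t_2$) is carried out correctly, so there is no gap.
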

\begin{proof}
Assume that $C$ is not submaximal on $(\lambda,t_2)$, i.e., $\ell_C(t_2)>\sqrt{L_{t_2}^2}=\pi_\lambda(t_2)$.
Furthermore, since $C$ is a Seshadri curve of $L_\lambda$, we have $\ell_C(\lambda)\leq \pi_\lambda(\lambda)$.
Therefore
the slopes $m_\lambda$ of $\pi_\lambda$ and $m_C$ of $\ell_C$ satisfy $m_\lambda < m_C$.
But this implies that $\ell_C(t_1)<\pi_\lambda(t_1)$
and therefore $C$ is submaximal on $(t_1,\lambda)$.
\end{proof}

We will need the
submaximality intervals of Pell bounds in the following explicit form:

\begin{lemma}\label{lem:interval-borders}
Let $L_\lambda$ be an ample $\Q$-line bundle with $\sqrt{L_\lambda^2}\notin\Q$, and let $l,k$ and $q$ be as in Def.~\ref{def:pell-bound}.
If $\End(X)=\Z[\sqrt{e}]$, then the submaximality interval $J_\lambda$ of $\pi_\lambda$ is given by
\be
J_\lambda=\left(\frac{2ek^2q^2\lambda - l \sqrt{e}}{e(2k^2q^2+1)}, \frac{2ek^2q^2\lambda + l \sqrt{e}}{e(2k^2q^2+1)} \right)\,,
\ee
and, if $\End(X)=\Z[\tfrac12 + \frac12 \sqrt{e}]$, then the submaximality interval $J_\lambda$ of $\pi_\lambda$ is given by
\be
J_\lambda=\left(\frac{2+2ek^2q^2\lambda - 2l \sqrt{e}}{(e-1)+2eq^2k^2},
\frac{2+2ek^2q^2\lambda + 2l \sqrt{e}}{(e-1)+2eq^2k^2}
\right)\,.
\ee
 \end{lemma}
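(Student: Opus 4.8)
\textit{Proof proposal.}
The plan is to exhibit $J_\lambda$ as the open interval cut out by the equation $\pi_\lambda(t)=\sqrt{L_t^2}$ and to solve this equation by hand. First I would pin down that $J_\lambda$ really is an open subinterval of $\mathcal N(X)$ with exactly two endpoints. Since $\pi_\lambda(\lambda)<\sqrt{L_\lambda^2}$ (the last inequality in the chain displayed after Def.~\ref{def:pell-bound}) and $t\mapsto\sqrt{L_t^2}$ is strictly concave on $\mathcal N(X)$ while $\pi_\lambda$ is linear, the set $J_\lambda$ is a non-empty open subinterval of $\mathcal N(X)$. Moreover $\pi_\lambda(t)=\frac{kqL_\lambda\cdot L_t}{l}>0$ for every $t\in\mathcal N(X)$, because $qL_\lambda$ is ample and $L_t$ is a non-zero nef class, whereas $\sqrt{L_t^2}$ vanishes at the two boundary points of $\mathcal N(X)$; hence the two endpoints of $J_\lambda$ lie in the interior of $\mathcal N(X)$ and are precisely the two roots of the quadratic polynomial $g(t):=l^2L_t^2-k^2(qL_\lambda\cdot L_t)^2$. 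Its leading coefficient $l^2L_\infty^2-k^2(qL_\lambda\cdot L_\infty)^2$ is negative since $L_\infty^2<0$, so $g>0$ exactly between its roots, and $J_\lambda$ is that open interval.

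Next I would compute $g$ in the basis $L_0,L_\infty$. Writing $\lambda=\frac pq$ in lowest terms, so that $qL_\lambda=qL_0+pL_\infty$, and using the intersection matrices \eqnref{eq:intersection-matrices}, one writes $L_t^2$, $qL_\lambda\cdot L_t$ and $(qL_\lambda)^2$ as explicit expressions (quadratic, linear, constant in $t$) with coefficients in $q,p,e$, and substitutes into $g$ to obtain $g(t)=\alpha t^2+\beta t+\gamma$ with $\alpha,\beta,\gamma$ polynomial in $q,p,k,l,e$. At this point I would use the Pell relation $l^2=1+(qL_\lambda)^2k^2$ to eliminate every occurrence of $(qL_\lambda)^2$ (equivalently of the combination $q^2-ep^2$, or its Case~2 analogue) in favour of $l^2$, and replace $p$ by $q\lambda$. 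The endpoints of $J_\lambda$ are then $t=\frac{-\beta\pm\sqrt{\beta^2-4\alpha\gamma}}{2\alpha}$, so the statement reduces to evaluating the discriminant.

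The heart of the proof, and the only real obstacle, is that $\beta^2-4\alpha\gamma$ must collapse: applying the Pell relation once more, I expect it to reduce to $16el^2$ in Case~1 (and to $4el^2$ after the corresponding normalisation in Case~2), so that $\sqrt{\beta^2-4\alpha\gamma}$ is the clean quantity $4l\sqrt e$ (resp.\ $2l\sqrt e$) and the quadratic formula delivers exactly the two endpoints listed in the lemma. Everything away from this cancellation is bookkeeping with the fixed $2\times2$ intersection matrices; the two cases are proved by the same argument, Case~2 merely carrying the extra off-diagonal entry $L_0\cdot L_\infty=1$ through the computation, which makes the coefficients bulkier but changes nothing structurally, and one checks directly that the discriminant again reduces to a constant multiple of $el^2$.
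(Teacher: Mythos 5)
Your proposal is correct and follows essentially the same route as the paper: both solve $\pi_\lambda(t)=\sqrt{L_t^2}$ as a quadratic in $t$ and use the Pell relation $l^2-(qL_\lambda)^2k^2=1$ to simplify the resulting roots (your discriminant values $16el^2$ and $4el^2$ do check out). Your additional remarks justifying that $J_\lambda$ is precisely the open interval between the two roots inside $\mathcal N(X)$ are a harmless elaboration of what the paper leaves implicit.
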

\begin{proof}
The interval limits of the submaximality interval $J_\lambda=(t_1,t_2)$ are the solutions $t$ of the equation
\be
\sqrt{L_t^2}=\pi_\lambda(t)\,.
\ee
In the case $\End(X)=\Z[\sqrt{e}]$, the solutions are
\be
t_{1,2}=\frac{2ek^2q^2\lambda \mp l \sqrt{e}\sqrt{l^2-(qL_\lambda)^2k^2}}{e(2ek^2q^2\lambda^2+l^2)}\,.
\ee
Upon applying the Pell equation $l^2-(qL_\lambda)^2 k^2=1$, these solutions can be expressed by
\be
t_{1,2}=\frac{2ek^2q^2\lambda \mp l \sqrt{e}}{e(2k^2q^2+1)}\,.
\ee
The case $\End(X)=\Z[\tfrac12 + \tfrac12 \sqrt{e}]$ is computed analogously.
\end{proof}
As the linear function $\ell_C$ of a submaximal curve $C$ coincides with a Pell bound, the interval borders for Seshadri curves have the same structure.
This reveals an interesting behavior of submaximality intervals of Seshadri curves:
\begin{proposition}\label{prop:adjacent}
Let $C_1$ and $C_2$ be two submaximal curves on $X$.
Then the submaximality intervals $I_{C_1}$ and $I_{C_2}$ are never adjacent to each other, i.e., if $I_{C_1}=(t_1,t_2)$ and $I_{C_2}=(s_1,s_2)$, then $t_1\neq s_2$ and $t_2\neq s_1$.
\end{proposition}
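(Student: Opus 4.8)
The plan is to read off the two endpoints of each submaximality interval from the formulas already established and then to exploit that $\sqrt e$ is irrational.

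First I would pass from the curves to Pell bounds. For $i=1,2$ let $\lambda_i\in\mathcal N(X)$ be the representative of the ray spanned by $\O_X(C_i)$, and write $\O_X(C_i)=p_iM_i$ with $M_i$ primitive and ample (note $\O_X(C_i)$ is ample since $X$ carries no elliptic curves). By Prop.~\ref{prop:C-m} the number $\sqrt{M_i^2}$ is irrational, hence so is $\sqrt{L_{\lambda_i}^2}$, as $L_{\lambda_i}$ is a positive rational multiple of $M_i$. Therefore Prop.~\ref{prop:irred-pell-bound} applies and gives $\ell_{C_i}=\pi_{\lambda_i}$, so that $I_{C_i}=J_{\lambda_i}$. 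Lemma~\ref{lem:interval-borders} then provides the two endpoints of $I_{C_i}$ in closed form. (This interval is not truncated by $\partial\mathcal N(X)$: on the boundary of the nef cone $L_t^2=0<\pi_{\lambda_i}(t)$, so $\partial\mathcal N(X)$ lies outside the closure of $I_{C_i}$, and the endpoints are genuine solutions of $\sqrt{L_t^2}=\pi_{\lambda_i}(t)$ -- which is the content of Lemma~\ref{lem:interval-borders}.)

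Next I would record the shape of the endpoints. In the case $\End(X)=\Z[\sqrt e]$, Lemma~\ref{lem:interval-borders} gives
$$
   I_{C_i}=\bigl(u_i-v_i\sqrt e,\ u_i+v_i\sqrt e\bigr),\qquad u_i\in\Q,\quad v_i=\frac{l_i}{e\,(2k_i^2q_i^2+1)}>0\,,
$$
where $(l_i,k_i)$ is the fundamental Pell solution and $q_i$ the denominator attached to $\lambda_i$ as in Def.~\ref{def:pell-bound}; the inequality $v_i>0$ is immediate since $l_i>0$ and every factor of the denominator is positive. In the case $\End(X)=\Z[\tfrac12+\tfrac12\sqrt e]$ the same holds with $v_i=2l_i/\bigl((e-1)+2eq_i^2k_i^2\bigr)>0$, where $e\equiv 1\tmod 4$ and $e$ non-square force $e\ge 5$ and hence make the denominator positive. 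Thus, for every submaximal curve, the left endpoint of its submaximality interval has a strictly negative $\sqrt e$-coefficient in its representation as an element of $\Q+\Q\sqrt e$, while the right endpoint has a strictly positive one.

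Finally I would conclude by linear independence. Since $e$ is a non-square positive integer, $\sqrt e\notin\Q$, so an element of $\Q+\Q\sqrt e$ is uniquely determined by its two rational coordinates. Writing $I_{C_1}=(t_1,t_2)$ and $I_{C_2}=(s_1,s_2)$ in the form above, an equality $t_1=s_2$ would force $-v_1=v_2$ upon comparing $\sqrt e$-coefficients, contradicting $v_1,v_2>0$; and $t_2=s_1$ is ruled out in the same way. Hence $I_{C_1}$ and $I_{C_2}$ are never adjacent. (In fact the argument yields the stronger statement that no left endpoint of any submaximality interval ever coincides with a right endpoint of any submaximality interval.) I do not anticipate a real obstacle here: the only steps requiring care are the reduction $I_{C_i}=J_{\lambda_i}$ through Prop.~\ref{prop:irred-pell-bound} (which relies on $\sqrt{L_{\lambda_i}^2}\notin\Q$) and the sign of the common denominator in each of the two cases.
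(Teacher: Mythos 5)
Your proposal is correct and takes essentially the same route as the paper: read off the interval endpoints from Lemma~\ref{lem:interval-borders} (after identifying $\ell_{C_i}$ with a Pell bound via Prop.~\ref{prop:irred-pell-bound}), observe that left endpoints have the form $a-b\sqrt e$ and right endpoints the form $a'+b'\sqrt e$ with $b,b'\in\Q^{+}$, and conclude by the linear independence of $1,\sqrt e$ over $\Q$. Your additional checks (irrationality of $\sqrt{L_{\lambda_i}^2}$ via Prop.~\ref{prop:C-m}, positivity of the $\sqrt e$-coefficients, and non-truncation at $\partial\mathcal N(X)$) are details the paper leaves implicit, and they are handled correctly.
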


\begin{proof}
Using the computation of the interval limits of Lemma~\ref{lem:interval-borders}, it follows that the left-hand side of the interval is always of the form $a-b\sqrt{e}$ for some $a\in \Q$ and $b\in \Q^+$, whereas the right-hand side is of the form $a'+b'\sqrt{e}$ for some $a'\in \Q$ and $b'\in \Q^+$.
Since $1$ and $\sqrt{e}$ form a basis of the $\Q$-vector space $\Q(\sqrt{e})$, they can never coincide.
\end{proof}

\begin{corollary}\label{cor:upper-bound-case}
Let $L_\lambda$ be any ample $\R$-line bundle such that $\varepsilon(L_\lambda)=\sqrt{L_\lambda^2}$.
For every neighborhood $U$ of $\lambda$, the Seshadri function is the pointwise infimum of infinitely many linear functions $\pi_\mu$, but it is not a piecewise linear function on $U$.
\end{corollary}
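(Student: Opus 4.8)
To prove this corollary, the plan is to combine the density of submaximal points near $\lambda$ with the rigidity of the endpoints of submaximality intervals coming from Lemma~\ref{lem:interval-borders} and Prop.~\ref{prop:adjacent}. First I would fix the ambient picture: since $L_\lambda$ is ample, $\lambda$ lies in the interior of $\mathcal N(X)$, so on a neighbourhood of $\lambda$ the upper bound function $F(t):=\sqrt{L_t^2}$ is strictly positive, smooth, and \emph{strictly} concave (its square is a downward parabola, see \eqnref{eq:intersection-matrices}), and $\lambda$ stays a positive distance away from $\partial\mathcal N(X)$. Next I would record that admissible rationals abound near $\lambda$: if $\mu=p/q$ in lowest terms with $p$ even and $q$ odd, then $(qL_\mu)^2$ is twice an odd integer, hence not a perfect square, so $\sqrt{L_\mu^2}\notin\Q$; and for $\mu$ close to $\lambda$ the bundle $L_\mu$ is ample. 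For such $\mu$ a Pell divisor exists and is submaximal \cite[Thm.~A.1]{Bauer-Szemberg:periods-appendix}, so the chain of inequalities following Definition~\ref{def:pell-bound} gives $\varepsilon(L_\mu)\le\pi_\mu(\mu)<F(\mu)$; such $\mu$ are dense, so (by continuity of $\varepsilon$) the set $S:=\{t\in U:\varepsilon(L_t)<F(t)\}$ is dense in every neighbourhood $U$ of $\lambda$, while $\lambda\in U\setminus S$.

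For the first assertion I would take the countable family of \emph{all} Pell bounds $\pi_\mu$ attached to admissible rationals $\mu$ (i.e.\ ample $\Q$-classes $L_\mu$ with $\sqrt{L_\mu^2}\notin\Q$) and show that its pointwise infimum on $U$ is $\varepsilon$. Each $\pi_\mu$ satisfies $\pi_\mu\ge\varepsilon$ throughout $\mathcal N(X)$ — on $J_\mu$ by the Pell estimate after Definition~\ref{def:pell-bound}, and off $J_\mu$ because there $\pi_\mu\ge F\ge\varepsilon$ — so the infimum is $\ge\varepsilon$. For the reverse inequality, at a point $t\in S$ a Seshadri curve $C\equiv qL_0+pL_\infty$ of $L_t$ exists, is irreducible and submaximal, so $p/q$ is admissible and $\ell_C=\pi_{p/q}$ by Prop.~\ref{prop:irred-pell-bound}, whence $\pi_{p/q}(t)=\ell_C(t)=\varepsilon(L_t)$. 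At a point $t_0\in U\setminus S$, where $\varepsilon(L_{t_0})=F(t_0)$, I would choose admissible $\mu_n\to t_0$ with denominators $q_n\to\infty$; then $l_n^2\ge 1+(q_nL_{\mu_n})^2=1+q_n^2L_{\mu_n}^2\to\infty$, a brief computation with the Pell equation gives $\pi_{\mu_n}(\mu_n)=F(\mu_n)\sqrt{1-1/l_n^2}$, which tends to $F(t_0)$, and the slope of $\pi_{\mu_n}$ is bounded by a constant depending only on the distance from $\lambda$ to $\partial\mathcal N(X)$ (from $l^2\ge(qL_\mu)^2k^2$ in Definition~\ref{def:pell-bound}); hence $\pi_{\mu_n}(t_0)\to F(t_0)=\varepsilon(L_{t_0})$, so the infimum equals $\varepsilon$ at $t_0$ as well.

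For the statement that $\varepsilon|_U$ is not piecewise linear I would argue by contradiction, assuming a decomposition into finitely many affine pieces. No piece lies inside $\{\varepsilon=F\}$, since $F$ is strictly concave; hence on each piece $\varepsilon<F$ outside a finite set, and choosing an interior point $s$ of the piece with $\varepsilon(L_s)<F(s)$ together with a Seshadri curve $C$ of $L_s$, one has $s\in I_C$, $\varepsilon\le\ell_C$ on $I_C$ (as $C$ is submaximal for each $L_t$, $t\in I_C$), and $\varepsilon(L_s)=\ell_C(s)$, which forces the piece's affine function to coincide with $\ell_C$ (two affine functions with $\ell\le\ell_C$ and equality at an interior point must agree). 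Now $\lambda$ cannot be interior to a piece: there $\varepsilon$ would be affine with $\varepsilon(L_\lambda)=F(\lambda)$, so $F-\varepsilon$ would be a non-negative strictly concave function vanishing at the interior point $\lambda$ — impossible. Therefore $\lambda$ is the common endpoint of the piece to its left, $\ell_{C^-}$, and the piece to its right, $\ell_{C^+}$; since on each side $\varepsilon$ equals an affine function lying strictly below $F$ outside a finite set and equal to $F$ at $\lambda$, the point $\lambda$ is the right endpoint of $I_{C^-}$ and the left endpoint of $I_{C^+}$. As $I_{C^-}$ and $I_{C^+}$ are non-empty open intervals, we must have $C^-\neq C^+$, so these two submaximality intervals are adjacent, contradicting Prop.~\ref{prop:adjacent}. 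Consequently $\varepsilon|_U$ is not piecewise linear; and since a pointwise infimum of finitely many affine functions is piecewise linear, no finite subfamily of the $\pi_\mu$ can have infimum $\varepsilon$ on $U$, so infinitely many are genuinely needed.

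The step I expect to be the main obstacle is the treatment of the points of $U\setminus S$ in the first assertion: to see that the infimum of the Pell bounds descends all the way to $F$ there, one needs the density of admissible rationals near $\lambda$ \emph{together with} a uniform bound on the slopes of the relevant $\pi_\mu$, and this slope bound is precisely where the ampleness of $L_\lambda$ — which keeps $\lambda$ off $\partial\mathcal N(X)$, where Pell-bound slopes blow up — is used. The remaining ingredients (Prop.~\ref{prop:irred-pell-bound}, the shape of $I_C$, strict concavity of $F$, and Prop.~\ref{prop:adjacent}) then go together fairly directly.
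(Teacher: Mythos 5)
Your proposal is correct and follows essentially the same route as the paper's proof: density of rational points $\mu$ with $\varepsilon(L_\mu)<\sqrt{L_\mu^2}$ together with continuity yields the infimum representation, and piecewise linearity near $\lambda$ is excluded because it would force two linear segments meeting at $\lambda$ with value $\sqrt{L_\lambda^2}$, i.e.\ adjacent submaximality intervals, contradicting Prop.~\ref{prop:adjacent}. Your write-up merely makes explicit some steps the paper leaves implicit (the uniform slope bound for the Pell bounds near $\lambda$ and the identification of each linear piece with some $\ell_C$), which is fine.
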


\begin{proof}
In any neighborhood $U$ of $\lambda$, the rational numbers $\mu\in U$ with $\varepsilon(L_\mu)<\sqrt{L_\mu^2}$ are dense in $U$.
Thus, by continuity of the Seshadri function we can express the Seshadri constant for any value $t\in U$ as an infimum of linear functions $\pi_\mu$.
The only possibility for the Seshadri function to be a piecewise linear function in a neighborhood of $\lambda$ is, if the Seshadri function is computed near $\lambda$ by two linear functions $\ell_1$ and $\ell_2$ with $\ell_1(\lambda)=\ell_2(\lambda)=\sqrt{L_\lambda^2}$.
This, however, is impossible by Prop.~\ref{prop:adjacent}.
\end{proof}

By combining Cor.~\ref{cor:linear-case} and \ref{cor:upper-bound-case} we deduce the following more general version of Theorem~\ref{introthm:linear-segments} stated in the introduction.

\begin{theorem}\label{thm:linear-segments-never-adjacent}
Let $\End(X)$ be as in Thm.~\ref{thm:one-submax-curve-lb}.
Then the Seshadri function is broken linear.
\end{theorem}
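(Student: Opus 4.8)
The plan is to verify that the Seshadri function $\varepsilon:\mathcal N(X)\to\R$ satisfies the two requirements in the definition of \emph{broken linear}: that there is a nonempty nowhere dense set $M$ off of which $\varepsilon$ is locally linear, and that no two maximal linear segments are adjacent. The natural candidate for $M$ is the set of $\lambda\in\mathcal N(X)$ with $\varepsilon(L_\lambda)=\sqrt{L_\lambda^2}$, i.e.\ the points where the general upper bound is attained. First I would dispose of condition~(i): for $\lambda\in\mathcal N(X)\setminus M$ we have $\varepsilon(L_\lambda)<\sqrt{L_\lambda^2}$, so Cor.~\ref{cor:linear-case} gives an open interval around $\lambda$ on which $\varepsilon$ is linear; in particular such a neighborhood stays inside $\mathcal N(X)\setminus M$, which is exactly (i). Next I would check that $M$ is nonempty and nowhere dense. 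Nonemptiness is immediate since the endpoints of $\mathcal N(X)$ (and, more to the point, any $\lambda$ that is not a submaximality point of any effective divisor) lie in $M$; nowhere density follows because every point of $\mathcal N(X)\setminus M$ has a whole neighborhood in the complement of $M$, so $M$ has empty interior, and $M$ is closed by continuity of $\varepsilon$ together with continuity of $t\mapsto\sqrt{L_t^2}$ — a closed set with empty interior is nowhere dense.

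For condition~(ii) I would argue as follows. Let $I_1,I_2$ be two distinct maximal open subintervals of $\mathcal N(X)$ on which $\varepsilon$ is linear. Each is contained in $\mathcal N(X)\setminus M$ by what was just shown (if a linear piece reached a point of $M$ it could, by Cor.~\ref{cor:upper-bound-case}, not extend through it, but more importantly a maximal linear piece cannot have an endpoint at which $\varepsilon=\sqrt{L^2}$ and still be approached by another linear piece — this is the content to be extracted below). On the interior of $I_j$, Prop.~\ref{prop:sesh-curve-1-submaximal} together with Thm.~\ref{thm:one-submax-curve-lb} shows that $\varepsilon$ is computed by a single submaximal curve $C_j$, so $\varepsilon|_{I_j}=\ell_{C_j}$ and hence $I_j$ is contained in the submaximality interval $I_{C_j}$; by maximality, in fact $I_j=I_{C_j}$. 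Now if $I_1$ and $I_2$ were adjacent, say the right endpoint of $I_{C_1}$ equalled the left endpoint of $I_{C_2}$, this would contradict Prop.~\ref{prop:adjacent}, which says the submaximality intervals of two submaximal curves are never adjacent. That settles (ii), and with (i) it proves the theorem.

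The one gap I would need to close carefully is the dichotomy between the two types of endpoints of a linear segment. A maximal linear piece $I_{C}$ can end either because some \emph{other} submaximal curve takes over (an interior point of $\mathcal N(X)\setminus M$, ruled out as adjacency by Prop.~\ref{prop:adjacent}) or because $\varepsilon$ meets the upper bound $\sqrt{L_t^2}$ at that endpoint (a point of $M$). In the first situation I have the previous paragraph; in the second, I must confirm that the endpoint genuinely lies in $M$ and that no linear piece begins there — but this is exactly Cor.~\ref{cor:upper-bound-case}, which states that near such a point $\varepsilon$ is the pointwise infimum of infinitely many distinct linear functions $\pi_\mu$ and is \emph{not} piecewise linear. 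So in neither case is $I_C$ adjacent to another linear piece, and moreover a point of $M$ is always an accumulation point of $\mathcal N(X)\setminus M$ (again by density of the $\mu$ with $\varepsilon(L_\mu)<\sqrt{L_\mu^2}$, used in Cor.~\ref{cor:upper-bound-case}), so $M$ is perfect. The main obstacle is thus purely bookkeeping: making sure the definition's clauses (i) and (ii) are matched precisely to the cases "endpoint in $\mathcal N(X)\setminus M$" versus "endpoint in $M$", and invoking Cor.~\ref{cor:linear-case}, Cor.~\ref{cor:upper-bound-case}, and Prop.~\ref{prop:adjacent} for the respective cases; no new geometric input beyond these earlier results is required.
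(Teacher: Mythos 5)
Your overall route is the paper's: take $M=\set{t\in\mathcal N(X)\with \varepsilon(L_t)=\sqrt{L_t^2}}$, obtain condition (i) from Cor.~\ref{cor:linear-case}, identify each maximal linear piece with the submaximality interval $I_C$ of a single Seshadri curve (using Thm.~\ref{thm:one-submax-curve-lb}), and exclude adjacency by Prop.~\ref{prop:adjacent}, with Cor.~\ref{cor:upper-bound-case} handling endpoints lying in $M$. That part is fine and matches the paper.

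The genuine gap is in your argument that $M$ is nowhere dense. You infer that ``$M$ has empty interior'' from the fact that every point of $\mathcal N(X)\setminus M$ has a neighborhood inside the complement of $M$. That only says the complement is open, i.e.\ that $M$ is closed; it does not prevent $M$ from containing a whole subinterval. What is actually needed is that the complement of $M$ is \emph{dense}, i.e.\ that arbitrarily close to every $t$ there are parameters $\mu$ with $\varepsilon(L_\mu)<\sqrt{L_\mu^2}$. You do appeal to exactly this density later (for perfectness of $M$), citing Cor.~\ref{cor:upper-bound-case} --- but in the paper that density is merely asserted in the proof of Cor.~\ref{cor:upper-bound-case}; its justification is supplied precisely in the proof of the present theorem, by exhibiting the dense family of bundles $L=qL_0+4pL_\infty$ with $q$ odd, for which $L^2\equiv 2\tmod 4$, hence $\sqrt{L^2}\notin\Q$, so a Pell divisor is submaximal and $\varepsilon(L_{4p/q})<\sqrt{L_{4p/q}^2}$; continuity then gives an open neighborhood of each such parameter avoiding $M$. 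Without some such construction your proposal does not establish nowhere density (and, strictly speaking, not perfectness either). A smaller divergence: for nonemptiness you put the endpoints of $\mathcal N(X)$ into $M$ (where $\varepsilon=\sqrt{L_t^2}=0$); this is acceptable because $\mathcal N(X)$ is closed and $\varepsilon$ is continuous, but the paper instead checks by explicit computation that the Seshadri curve $C\in|4L_0|$ of $L_0$ (with $\mult_0C=6$) is not submaximal on all of $\mathcal N(X)$, so that the borders of $I_C$ yield points of $M$ in the interior of the ample range --- a stronger statement than bare nonemptiness.
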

\begin{proof}
It follows from Cor.~\ref{cor:linear-case} that for every point $t\in\mathcal N(X)$ with $\varepsilon(L_t)<\sqrt{L_t^2}$ the Seshadri function is a linear function in a neighborhood of $t$.
By Prop.~\ref{prop:adjacent} the maximal intervals, on which the Seshadri function is linear are never adjacent to each other.
Lastly, we have to argue that the set $M(X)=\set{t\in\mathcal N(X)\,\,\vrule\,\, \varepsilon(L_t)=\sqrt{L_t^2}}$ is nowhere dense and non empty.
For this we consider ample line bundles of the form $L=qL_0+4qL_\infty$ for odd $q\in\N$ and $p\in \Z$.
In this cases $L^2$ can never be a square number as $L^2\equiv 2\tmod 4$.
This yields a dense subset of lines bundles $L_{4q/p}$ in $\mathcal N(X)$ with $\varepsilon(L_{4p/q})<\sqrt{L_{4p/q}^2}$.
As the Seshadri function is continuous, we get for each line bundle $L_{4q/p}$ an open neighborhood on which the Seshadri function is submaximal.
Thus, $M(X)$ is a nowhere dense subset of $\mathcal N(X)$.
Explicit computations show that the Seshadri curve $C\in |4L_0|$ of $L_0$ with $\mult_0 C=6$ is not submaximal on $\mathcal N(X)$ and, therefore, the interval borders of the submaximality interval $I_C$ are contained in $M(X)$.
\end{proof}

%

\begin{remark}\label{rem:border-points}
Suppose that on $X$ there is a line bundle $L_\lambda$ with two submaximal curves.
Then there exists a neighborhood of $L_\lambda$ in which every line bundle has two submaximal curves, and thus there exist linear segments of the Seshadri function that are adjacent to each other.
On the other hand, we have seen in Prop.~\ref{prop:sesh-curve-1-submaximal} that there are also neighborhoods, in which only one submaximal curve exists.
Furthermore, using Prop.~\ref{prop:C1C2-m1m2} one can show that every line bundle in the submaximality interval $I_0$ of $L_0$ has only one submaximal curve $C$, which is the unique Pell divisor of $L_0$.
Consequently, the limit points of this submaximal interval are accumulation points of (piecewise) linear segments.
So in this case, as in the situation of Thm.~\ref{thm:linear-segments-never-adjacent}, the Seshadri function
does not consist of only finitely many linear pieces.
\end{remark}

We return to the submaximality interval $J_\lambda=(t_1,t_2)$ of a Pell bound $\pi_\lambda$ by
providing an upper bound for its length:

\begin{lemma}\label{lem:interval-length}
Let $L_\lambda$ be an ample $\Q$-line bundle with $\sqrt{L_\lambda^2}\notin\Q$, and let $l,k$ and $q$ be as in Def.~\ref{def:pell-bound}.
Then the interval length of $J_\lambda=(t_1,t_2)$ is bounded by
\be
t_2-t_1 <\frac{\sqrt{11}}{q\sqrt{e}}\,.
\ee
\end{lemma}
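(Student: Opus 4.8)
The plan is to use the explicit formulas for the interval endpoints $t_1, t_2$ from Lemma~\ref{lem:interval-borders} and bound the resulting expression. From those formulas, in the case $\End(X)=\Z[\sqrt e]$ we get
\be
   t_2 - t_1 = \frac{2l\sqrt e}{e(2k^2q^2+1)} = \frac{2l}{\sqrt e\,(2k^2q^2+1)}\,,
\ee
and in the case $\End(X)=\Z[\tfrac12+\tfrac12\sqrt e]$ we get
\be
   t_2-t_1 = \frac{4l\sqrt e}{(e-1)+2eq^2k^2}\,.
\ee
So the whole matter reduces to bounding $l$ in terms of $k$, $q$, and $e$. First I would bring in the Pell equation $l^2 - (qL_\lambda)^2 k^2 = 1$ that defines $(l,k)$, which gives $l^2 = (qL_\lambda)^2k^2 + 1$, so $l = k\,qL_\lambda\sqrt{1 + 1/((qL_\lambda)^2 k^2)}$, and hence $l$ is only marginally larger than $k\cdot qL_\lambda\cdot\sqrt{\ }$—in any case $l \le \sqrt 2\, k\, qL_\lambda$ already for $k qL_\lambda \ge 1$ (and $k qL_\lambda = 1$ forces $(qL_\lambda)^2 = l^2-1$, handled directly).

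The key remaining step is to relate $(qL_\lambda)^2 = (qL_\lambda)^2$ to $e$ and $q$. Writing $qL_\lambda \equiv qL_0 + pL_\infty$ with $\gcd(p,q)=1$ (by the very definition of $q$ in Def.~\ref{def:pell-bound}), the intersection matrices in \eqnref{eq:intersection-matrices} give $(qL_\lambda)^2 = 2q^2 - 2ep^2$ in Case 1 and $(qL_\lambda)^2 = 2q^2 + 2pq + \tfrac{1-e}2 p^2$ in Case 2. The point is that $(qL_\lambda)^2$ is \emph{positive} (the bundle is ample) but also cannot be too large relative to $q^2$: since ampleness forces $q^2 - ep^2 > 0$ in Case 1, we get $0 < (qL_\lambda)^2 < 2q^2$; a similar two-sided bound holds in Case 2 using the ampleness inequality $a^2 + ab + \tfrac{1-e}4 b^2 > 0$. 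Substituting $l \le \sqrt 2\,k\,\sqrt{(qL_\lambda)^2} \le \sqrt 2\, k\,\sqrt{2}\,q = 2kq$ into the length formula, the $k^2 q^2$ in the denominator dominates, and one reads off $t_2-t_1 < \tfrac{c}{q\sqrt e}$ for an explicit constant $c$; the bookkeeping should produce something comfortably below $\sqrt{11}$, with the constant $\sqrt{11}$ presumably chosen to cover the Case-2 arithmetic (the coefficient $4$ in the numerator and the $e-1$ in the denominator) uniformly.

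I expect the main obstacle to be the Case~2 bookkeeping: there the numerator carries a factor $4$ rather than $2$, the denominator is $(e-1) + 2eq^2k^2$ rather than $e(2k^2q^2+1)$, and $(qL_\lambda)^2 = 2q^2 + 2pq + \tfrac{1-e}2 p^2$ is not manifestly bounded by a clean multiple of $q^2$, so one has to use the ampleness inequality carefully (and perhaps separate out the small cases $k=q=1$ or $kqL_\lambda$ small, where the crude estimate $l \le \sqrt 2\, k\, qL_\lambda$ is too weak and one instead uses $l^2 = (qL_\lambda)^2k^2+1$ directly). Once the bound $(qL_\lambda)^2 \le 2q^2$ (or the analogous Case-2 version) is in hand, the rest is a one-line substitution, and choosing the constant $\sqrt{11}$ generously absorbs the worst case.
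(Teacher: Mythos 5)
Your skeleton --- start from the endpoint formulas of Lemma~\ref{lem:interval-borders}, control $l$ through the Pell equation, and bound $L_\lambda^2$ by an absolute constant --- is the same route the paper takes (the paper packages the Pell input as $\frac{l}{kq}=\sqrt{L_\lambda^2+\frac{1}{k^2q^2}}$, which is your estimate in disguise), and your Case~1 computation goes through. The gap is in Case~2, which is exactly where the constant $\sqrt{11}$ is decided and which you leave as bookkeeping that \engqq{should produce something comfortably below $\sqrt{11}$}. First, the bound $(qL_\lambda)^2\le 2q^2$ that you substitute is false in Case~2: there $L_\lambda^2=2+2\lambda-\frac{e-1}{2}\lambda^2$, which for $e=5$, $\lambda=\frac12$ equals $\frac52$, so $(qL_\lambda)^2=10>2q^2=8$. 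Ampleness only gives the lower bound $L_\lambda^2>0$; the upper bound one actually needs is $L_\lambda^2\le 2+\frac{2}{e-1}\le\frac52$, obtained by maximizing the quadratic in $\lambda$ and using $e\ge 5$ --- both steps the paper performs explicitly.

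Second, even with the correct bound $L_\lambda^2\le\frac52$, your crude estimate $l\le\sqrt2\,k\sqrt{(qL_\lambda)^2}\le\sqrt5\,kq$ yields only $t_2-t_1<\frac{4\sqrt5\,kq\sqrt e}{2ek^2q^2}=\frac{2\sqrt5}{kq\sqrt e}$, which for $kq=1$ is $\frac{\sqrt{20}}{q\sqrt e}>\frac{\sqrt{11}}{q\sqrt e}$; and at $e=5$, $kq=2$ the sharp version of the argument lands exactly on $\sqrt{11}$, so there is no room to spare and nothing \engqq{comfortable} to absorb slack. The missing ingredient is precisely the inequality $\frac{1}{k^2q^2}\le\frac14$, i.e.\ ruling out $k=q=1$, which you only flag as a \engqq{perhaps}. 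The paper treats this as a separate verification: for $q=1$, ampleness and integrality force $L_\lambda^2=2$ (in both cases), so the Pell equation is $x^2-2y^2=1$ and $k=2$. With $kq\ge 2$ and $L_\lambda^2\le 2+\frac{2}{e-1}$ one gets $t_2-t_1<\frac{2\sqrt{L_\lambda^2+\frac{1}{k^2q^2}}}{kq\sqrt e}\le\frac{2\sqrt{2+\frac12+\frac14}}{q\sqrt e}=\frac{\sqrt{11}}{q\sqrt e}$, which is the paper's computation. So the plan is right, but the two quantitative inputs that actually produce the constant $\sqrt{11}$ --- the Case-2 bound $L_\lambda^2\le 2+\frac{2}{e-1}$ with $e\ge 5$, and $kq\ge 2$ --- are absent from your argument, and your expectation about the outcome of the crude estimate is incorrect.
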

\begin{proof}
Using the fact that the Pell equation is equivalent to
\be
\frac{l}{kq}=\sqrt{L_\lambda^2+\frac{1}{k^2q^2}}\,,
\ee
the interval length can be determined via
Lemma~\ref{lem:interval-borders}: In the
case $\End(X)=\Z[\sqrt{e}]$ we get
$$
t_2 - t_1 = \frac{2\tfrac{l}{kq}}{\sqrt{e}(2kq+\tfrac{1}{kq})}
=\frac{2\sqrt{L_\lambda^2+\tfrac{1}{k^2q^2}}}{\sqrt{e}(2kq+\tfrac{1}{kq})}
=\frac{2\sqrt{2-\frac{2ep^2k^2-1}{k^2q^2}}}{\sqrt{e}(2kq+\tfrac{1}{kq})}
< \frac{\sqrt{2}}{q\sqrt{e}}\,,
$$
and in the case $\End(X)=\Z[\tfrac12 + \tfrac12 \sqrt{e}]$ we obtain
$$
t_2 - t_1 = \frac{4\frac{l}{kq} \sqrt{e}}{\frac{e-1}{kq}+2ekq}=\frac{4\sqrt{e}\sqrt{2+2\lambda-\frac{e-1}{2}\lambda^2+\frac{1}{k^2q^2}} }{\frac{e-1}{kq}+2ekq}< \frac{2\sqrt{2+\frac{2}{e-1}+\frac{1}{k^2q^2}} }{q\sqrt{e}}<\frac{\sqrt{11}}{q\sqrt{e}}\,.
$$
In the third step we replaced the term $2\lambda - \tfrac{e-1}{2}\lambda^2$ with its maximum value $\frac{2}{e-1}$, and we use $e\geq 5$ in the last step.
Also, we made use of the inequality $\tfrac{1}{k^2q^2}\leq \tfrac{1}{4}$, which can be verified by explicitly considering all possible Pell solutions for $q=1$.
\end{proof}
\begin{remark}
In the last step of the proof we could have used the more direct estimate $\tfrac{1}{k^2q^2}\leq 1$, which implies
\be
\frac{2\sqrt{2+\frac{2}{e-1}+\frac{1}{k^2q^2}} }{q\sqrt{e}}<\frac{\sqrt{14}}{q\sqrt{e}}\,
\ee
However, it turns out that this upper bound is not sufficient for our purposes (in particular, for the proof of Prop.~\ref{prop:sequence-overlapping}).
\end{remark}

The previous Lemma yields the following:

\begin{corollary}\label{cor:interval-finite-pell-bounds}
For any given interval $I\subset \mathcal N(X)$ there exist only finitely many Pell bounds $\pi_\lambda$ that are submaximal on $I$.
\end{corollary}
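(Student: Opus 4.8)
The plan is to deduce the statement directly from the length bound of Lemma~\ref{lem:interval-length}; throughout we take $I$ to be a nondegenerate interval, so that $|I|>0$. Suppose $\pi_\lambda$ is submaximal on $I$, and write $\lambda=p/q$ in lowest terms, so that $q$ is the integer attached to $\lambda$ in Definition~\ref{def:pell-bound}. By the definition of the submaximality interval $J_\lambda$, the hypothesis that $\pi_\lambda(t)<\sqrt{L_t^2}$ holds for every $t\in I$ says precisely that $I\subseteq J_\lambda$. Hence
\[
   |I|\;\le\;|J_\lambda|\;<\;\frac{\sqrt{11}}{q\sqrt{e}}
\]
by Lemma~\ref{lem:interval-length}, and therefore $q<\sqrt{11}/(|I|\sqrt{e})$. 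In particular only finitely many denominators $q$ can occur among the $\lambda$ for which $\pi_\lambda$ is submaximal on $I$.

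It remains to bound, for each such admissible $q$, the number of possible numerators $p$. For this I would invoke that $\lambda$ itself lies in $J_\lambda$: the chain of inequalities recorded right after Definition~\ref{def:pell-bound} gives $\eps(L_\lambda)\le\pi_\lambda(\lambda)<\sqrt{L_\lambda^2}$, i.e.\ $\lambda\in J_\lambda$. Since also $I\subseteq J_\lambda$ and $|J_\lambda|<\sqrt{11}/\sqrt{e}$ (using $q\ge 1$), both $\lambda$ and every point of $I$ lie in the interval obtained by enlarging $I$ by $\sqrt{11}/\sqrt{e}$ on each side. This is a fixed bounded interval depending only on $I$ and $e$, and for a fixed denominator $q$ it contains only finitely many rationals of the form $p/q$. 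Combining this with the finiteness of the admissible denominators established above yields the claim.

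The argument involves no real obstacle: its substance lies entirely in Lemma~\ref{lem:interval-length}. The feature to highlight is that the bound there decays like $1/q$, which is exactly what is needed on both counts — an interval $J_\lambda$ wide enough to contain the fixed interval $I$ forces $q$ to be bounded, and then the shortness of $J_\lambda$, which contains both $\lambda$ and $I$, keeps $\lambda$ within bounded distance of $I$. A uniform bound on $|J_\lambda|$ not involving $q$ would only yield the second of these conclusions and would not suffice; likewise the nondegeneracy of $I$ is essential in passing from the length bound to the bound on $q$.
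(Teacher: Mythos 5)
Your proposal is correct and follows essentially the same route as the paper: the submaximality of $\pi_\lambda$ on $I$ gives $I\subseteq J_\lambda$, so Lemma~\ref{lem:interval-length} bounds the denominator $q$ by $\sqrt{11}/(|I|\sqrt{e})$, leaving only finitely many admissible $\lambda$. The only (immaterial) difference is in bounding the numerators: the paper simply uses that $\lambda$ lies in the bounded set $\mathcal N(X)$, whereas you confine $\lambda$ to a bounded enlargement of $I$ via $\lambda\in J_\lambda$ — both are fine, and your explicit remark that $I$ should be nondegenerate matches the paper's implicit use of $s=|I|>0$.
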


\begin{proof}
Let $s$ be the length of $I$ and let $\pi_\lambda$ be a Pell bound that is submaximal on $I$.
Then $s$ is at most the length of $J_\lambda$, so that it
follows from Lemma~\ref{lem:interval-length} that the denominator of $\lambda=\frac{p}{q}$ satisfies
\be
s \leq \frac{\sqrt{11}}{q\sqrt{e}}\,.
\ee
Thus, $\lambda$ has to be contained in the finite set
\be
\set{\frac{a}{b} \in \mathcal N(X) \with 1\leq b\leq \frac{\sqrt{11}}{s\sqrt{e}}\,, \, \gcd(a,b)=1 }\,.
\ee
\end{proof}

By combining Prop.~\ref{prop:seshadri-submaximal-interval} and Cor.~\ref{cor:interval-finite-pell-bounds} we will obtain a purely numerical method to compute the Seshadri constant and determine the
Seshadri curves of $L_\lambda$:

\begin{proposition}\label{prop:crit-pell-irred2}
Let $L_\lambda$ be an ample $\Q$-line bundle with $\sqrt{L_\lambda^2}\notin\Q$, and $J_\lambda=(t_1,t_2)$ be the submaximality interval of $\pi_\lambda$.
Let $s(\lambda)=\min\{\lambda-t_1,t_2-\lambda\}$, and consider the finite set $A_\lambda:=\set{\frac{a}{b} \in \mathcal N(X) \with 1\leq b\leq \frac{\sqrt{11}}{s(\lambda)\sqrt{e}}\,, \, \gcd(a,b)=1  }$.
Then the Seshadri constant of $L_\lambda$ is given by
\be
\varepsilon(L_\lambda)=\min\{\pi_\mu(\lambda) \,\vrule\, \mu\in A_\lambda\}\,.
\ee
Moreover, every Seshadri curve $C$ of $L_\lambda$ is represented by a unique Pell bound $\pi_\tau$ with $\tau\in A_\lambda$ and $\varepsilon(L_\lambda)=\pi_\tau(\lambda)$.
\end{proposition}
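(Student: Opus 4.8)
The plan is to prove the equality via the two inequalities $\varepsilon(L_\lambda)\le\min\{\pi_\mu(\lambda)\mid\mu\in A_\lambda\}$ and $\varepsilon(L_\lambda)\ge\min\{\pi_\mu(\lambda)\mid\mu\in A_\lambda\}$, and to obtain the \emph{moreover} assertion as a byproduct of the second inequality.

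For ``$\le$'', fix any $\mu\in A_\lambda$ for which the Pell bound $\pi_\mu$ is defined (that is, $\sqrt{L_\mu^2}\notin\Q$). A Pell divisor $P$ of $L_\mu$ exists by \cite[Thm.~A.1]{Bauer-Szemberg:periods-appendix}; it is effective and passes through $0$, so $\varepsilon(L_\lambda)\le\ell_P(\lambda)$, and $\ell_P\le\pi_\mu$ because $\mult_0 P$ is at least the expected multiplicity encoded in $\pi_\mu$ --- this is exactly the chain of inequalities recorded just after Definition~\ref{def:pell-bound}. Hence $\varepsilon(L_\lambda)\le\pi_\mu(\lambda)$ for every such $\mu$, so $\varepsilon(L_\lambda)\le\min\{\pi_\mu(\lambda)\mid\mu\in A_\lambda\}$. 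I would also record that $\lambda$ itself lies in $A_\lambda$: writing $\lambda=\tfrac pq$ in lowest terms, Lemma~\ref{lem:interval-length} gives $s(\lambda)\le\tfrac12(t_2-t_1)<\tfrac{\sqrt{11}}{2q\sqrt e}$, hence $q<\tfrac{\sqrt{11}}{s(\lambda)\sqrt e}$; in particular $A_\lambda\neq\emptyset$ and the minimum is $\le\pi_\lambda(\lambda)<\sqrt{L_\lambda^2}$.

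For ``$\ge$'', note first that since $\sqrt{L_\lambda^2}\notin\Q$ a Pell divisor of $L_\lambda$ is submaximal for $L_\lambda$, so $\varepsilon(L_\lambda)<\sqrt{L_\lambda^2}$ and (finitely many submaximal curves, by \cite[Prop.~1.8]{Szemberg}) $L_\lambda$ has an irreducible Seshadri curve $C$; write $C\equiv qL_0+pL_\infty$. By Proposition~\ref{prop:irred-pell-bound} the linear function $\ell_C$ equals the Pell bound $\pi_{p/q}$, so the submaximality interval $I_C$ of $C$ coincides with $J_{p/q}$. By Proposition~\ref{prop:seshadri-submaximal-interval} the curve $C$ is submaximal on one of the two half-intervals $(t_1,\lambda)$ or $(\lambda,t_2)$, hence $I_C=J_{p/q}$ has length at least $s(\lambda)$. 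Comparing with the bound of Lemma~\ref{lem:interval-length} applied to $J_{p/q}$ yields $s(\lambda)\le|J_{p/q}|<\tfrac{\sqrt{11}}{q\sqrt e}$, i.e.\ $q<\tfrac{\sqrt{11}}{s(\lambda)\sqrt e}$, so $\tfrac pq\in A_\lambda$. Therefore $\varepsilon(L_\lambda)=\ell_C(\lambda)=\pi_{p/q}(\lambda)\ge\min\{\pi_\mu(\lambda)\mid\mu\in A_\lambda\}$, and together with the first inequality this gives the equality.

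The computation in the previous paragraph applies verbatim to any Seshadri curve $C\equiv qL_0+pL_\infty$ of $L_\lambda$ and yields $\tau=\tfrac pq\in A_\lambda$ with $\pi_\tau(\lambda)=\varepsilon(L_\lambda)$; and $\pi_\tau$ is the unique Pell bound with $\ell_C=\pi_\tau$, again by Proposition~\ref{prop:irred-pell-bound}, since that equality forces $\tau=\tfrac pq$. This is the \emph{moreover} statement. The step I expect to require the most care is the length estimate: one must verify that Proposition~\ref{prop:seshadri-submaximal-interval} genuinely forces $I_C$ to contain one of the half-open pieces $(t_1,\lambda)$, $(\lambda,t_2)$ of $J_\lambda$ (so that $|I_C|\ge s(\lambda)$), and then feed $J_{p/q}$ --- with the denominator $q$ coming from $C$, not from $\lambda$ --- into Lemma~\ref{lem:interval-length}. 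A minor bookkeeping point is that $A_\lambda$ may contain rationals $\mu$ with $\sqrt{L_\mu^2}\in\Q$, for which $\pi_\mu$ is undefined; the minimum is to be read over those $\mu\in A_\lambda$ with $\sqrt{L_\mu^2}\notin\Q$, and by Proposition~\ref{prop:C-m} the $\tau$ arising from a Seshadri curve is always of this kind.
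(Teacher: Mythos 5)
Your argument is correct and follows essentially the same route as the paper: represent any Seshadri curve $C$ of $L_\lambda$ by its unique Pell bound $\pi_\tau$ via Prop.~\ref{prop:irred-pell-bound}, use Prop.~\ref{prop:seshadri-submaximal-interval} to see that its submaximality interval has length at least $s(\lambda)$, and conclude $\tau\in A_\lambda$ from the length bound of Lemma~\ref{lem:interval-length} (the paper cites Cor.~\ref{cor:interval-finite-pell-bounds}, which is the same estimate). Your extra details -- the explicit ``$\le$'' direction via Pell divisors, the check that $\lambda\in A_\lambda$, and the remark about $\mu$ with $\sqrt{L_\mu^2}\in\Q$ -- are correct fillings-in of points the paper leaves implicit, not a different method.
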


\begin{proof}
By Prop.~\ref{prop:seshadri-submaximal-interval} any Seshadri curve $C$ of $L_\lambda$ is submaximal either on $(t_1,\lambda)$ or on $(\lambda, t_2)$, and therefore it is submaximal on an interval of length $s=\min\{\lambda-t_1,t_2-\lambda\}$.
By Prop.~\ref{prop:irred-pell-bound} the linear function $\ell_C$ coincides with a unique Pell bound $\pi_\tau$, and therefore their submaximality intervals coincide.
Thus, $\tau$ is an element of the finite set $A_\lambda$ by Cor.~\ref{cor:interval-finite-pell-bounds}.
\end{proof}

Furthermore, we can identify those Pell bounds which uniquely represent submaximal curves:

\begin{proposition}\label{prop:crit-pell-seshadri}
Let $L_\lambda$ be an ample $\Q$-line bundle with $\sqrt{L_\lambda^2}\notin\Q$ and let $A_\lambda$ be as in Prop.~\ref{prop:crit-pell-irred2}.
Then the following conditions are equivalent:
\begin{itemize}\compact
\item[\rm(i)]
   The Pell bound $\pi_\lambda$
   coincides with $\ell_C$ for some submaximal irreducible curve $C$.
\item[\rm(ii)]
   Every Pell bound $\pi_\mu$ with $\mu\in A_\lambda\setminus\set{\lambda}$ satisfies $\pi_\lambda(\lambda)<\pi_\mu(\lambda)$.
\end{itemize}
\end{proposition}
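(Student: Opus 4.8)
The plan is to prove the two implications separately, using two standard ingredients. First, by \cite{Bauer-Szemberg:periods-appendix} every ample $\Q$-line bundle with irrational self-intersection has a Pell divisor $P$, and for any point $t\in\mathcal N(X)$ one has the chain $\varepsilon(L_t)\le L_t\cdot P/\mult_0P\le\pi_\mu(t)<\sqrt{L_t^2}$ when $P$ is a Pell divisor of $L_\mu$ (this is exactly the estimate recorded after Definition~\ref{def:pell-bound}, the middle inequality because $\mult_0P$ is at least the expected multiplicity and $L_t$ is nef). Second, I would isolate the elementary \emph{averaging observation}: if $D$ is an effective divisor with $\mult_0D>0$ and $L\cdot D/\mult_0D=\varepsilon(L)$, then $D$ has no component avoiding the origin, and every irreducible component of $D$ computes $\varepsilon(L)$; this follows by writing $\varepsilon(L)\cdot\mult_0D=L\cdot D=\sum a_iL\cdot C_i$ and comparing with the termwise bounds $L\cdot C_i\ge\varepsilon(L)\mult_0C_i$ (for components through $0$) and $L\cdot C_i>0$ (for all components). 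I would also note at the outset that $\lambda\in A_\lambda$ (by the interval-length bound of Lemma~\ref{lem:interval-length} applied to $J_\lambda$) and that $\pi_\lambda(\lambda)<\sqrt{L_\lambda^2}$, so by Prop.~\ref{prop:crit-pell-irred2} we always have $\varepsilon(L_\lambda)=\min\{\pi_\mu(\lambda)\mid\mu\in A_\lambda\}\le\pi_\lambda(\lambda)$.

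For the implication (ii)~$\Rightarrow$~(i): condition (ii) forces the minimum in the displayed formula to be attained, and attained only at $\mu=\lambda$, so $\varepsilon(L_\lambda)=\pi_\lambda(\lambda)<\sqrt{L_\lambda^2}$. Take a Pell divisor $P$ of $L_\lambda$; the chain from the first paragraph sandwiches $\varepsilon(L_\lambda)\le L_\lambda\cdot P/\mult_0P\le\pi_\lambda(\lambda)=\varepsilon(L_\lambda)$, so $L_\lambda\cdot P/\mult_0P=\varepsilon(L_\lambda)$, and the averaging observation produces an irreducible component $C$ of $P$ that is a Seshadri curve of $L_\lambda$. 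By the ``moreover'' part of Prop.~\ref{prop:crit-pell-irred2}, $\ell_C$ equals a unique Pell bound $\pi_\tau$ with $\tau\in A_\lambda$ and $\pi_\tau(\lambda)=\varepsilon(L_\lambda)=\pi_\lambda(\lambda)$; since (ii) says no $\mu\in A_\lambda\setminus\{\lambda\}$ attains $\pi_\lambda(\lambda)$, we must have $\tau=\lambda$, i.e.\ $\pi_\lambda=\ell_C$, which is (i).

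For the implication (i)~$\Rightarrow$~(ii): assume $\pi_\lambda=\ell_C$ with $C\equiv qL_0+pL_\infty$ irreducible and submaximal. By Prop.~\ref{prop:irred-pell-bound} this gives $\lambda=p/q$ and that $C$ (or $2C$) is the unique Pell divisor of $L_\lambda$, and by Prop.~\ref{prop:sesh-curve-1-submaximal}, $C$ is the only submaximal curve for $L_\lambda$ and $\varepsilon(L_\lambda)=\ell_C(\lambda)=\pi_\lambda(\lambda)$. Now suppose (ii) fails, so $\pi_\mu(\lambda)\le\pi_\lambda(\lambda)$ for some $\mu\in A_\lambda\setminus\{\lambda\}$; combined with $\pi_\mu(\lambda)\ge\varepsilon(L_\lambda)=\pi_\lambda(\lambda)$ this yields $\pi_\mu(\lambda)=\varepsilon(L_\lambda)$. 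Applying the sandwich to a Pell divisor $P_\mu$ of $L_\mu$ gives $L_\lambda\cdot P_\mu/\mult_0P_\mu=\varepsilon(L_\lambda)$, so by the averaging observation every component of $P_\mu$ computes $\varepsilon(L_\lambda)$ and hence, being the unique submaximal curve of $L_\lambda$, equals $C$; thus $P_\mu=kC$ for some $k\ge1$. But the class of a Pell divisor of $L_\mu$ is a positive rational multiple of the primitive class underlying $L_\mu$, so $qL_0+pL_\infty$ would be proportional to the class of $L_\mu$, forcing $\mu=p/q=\lambda$ in $\mathcal N(X)$ --- a contradiction.

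Since the proof is really an assembly of the earlier propositions, there is no single hard computational step; the point that needs most care is making sure that in (ii)~$\Rightarrow$~(i) one genuinely extracts an \emph{irreducible} curve whose linear function is $\pi_\lambda$, rather than merely the numerical equality $\varepsilon(L_\lambda)=\pi_\lambda(\lambda)$ --- and this is exactly what the detour through a Pell divisor, its components, and the ``moreover'' part of Prop.~\ref{prop:crit-pell-irred2} (together with Prop.~\ref{prop:seshadri-submaximal-interval} guaranteeing that the representing parameter lands in $A_\lambda$) is designed to supply. The other item to double-check is the bookkeeping that $\lambda\in A_\lambda$ and that the parameter $\tau$ produced above has denominator small enough to lie in $A_\lambda$; both reduce to the interval-length estimate of Lemma~\ref{lem:interval-length}.
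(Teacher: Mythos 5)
Your proposal is correct. The direction (ii)$\Rightarrow$(i) is essentially the paper's argument --- invoke the ``moreover'' part of Prop.~\ref{prop:crit-pell-irred2} to represent a Seshadri curve of $L_\lambda$ by a Pell bound $\pi_\tau$ with $\tau\in A_\lambda$, then use the strict inequality in (ii) to force $\tau=\lambda$ --- except that you additionally construct the Seshadri curve explicitly as a component of a Pell divisor of $L_\lambda$ via your averaging observation, where the paper simply takes the existence of a Seshadri curve for granted (justified because $\varepsilon(L_\lambda)\le\pi_\lambda(\lambda)<\sqrt{L_\lambda^2}$); your bookkeeping that $\lambda\in A_\lambda$ via Lemma~\ref{lem:interval-length} is also correct. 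For (i)$\Rightarrow$(ii) you genuinely diverge: the paper argues locally, using Prop.~\ref{prop:sesh-curve-1-submaximal} to see that $\pi_\lambda=\ell_C$ computes the Seshadri function on a neighborhood $U$ of $\lambda$, so that an equality $\pi_\mu(\lambda)=\pi_\lambda(\lambda)$ together with the global bound $\varepsilon\le\pi_\mu$ forces the two linear functions to coincide on $U$, hence to be equal, after which Prop.~\ref{prop:irred-pell-bound} gives $\mu=\lambda$; you instead run a divisor-theoretic argument (sandwich a Pell divisor $P_\mu$ of $L_\mu$, conclude by averaging that all of its components compute $\varepsilon(L_\lambda)$, hence coincide with the unique submaximal curve $C$ of $L_\lambda$, so the class of $P_\mu$ is proportional to that of $C$ and therefore $\mu=p/q=\lambda$), which is essentially the mechanism the paper uses inside the proof of (ii)$\Rightarrow$(iii) of Prop.~\ref{prop:irred-pell-bound} rather than in the proof of this proposition. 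Both routes are sound: the paper's is shorter because it reuses the already-proved uniqueness statement of Prop.~\ref{prop:irred-pell-bound}, while yours is more self-contained at the level of divisors and makes explicit where the uniqueness of the submaximal curve for $L_\lambda$ (Prop.~\ref{prop:sesh-curve-1-submaximal}) enters.
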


\begin{proof}
Assume that $\pi_\lambda=\ell_C$, and let $\pi_\mu$ be any Pell bound with $\mu\neq\lambda$.
Prop.~\ref{prop:irred-pell-bound} shows that the unique representative
of $\O_X(C)$ in $\mathcal N(X)$ is $L_\lambda$.
As $C$ computes the Seshadri constant of $L_\lambda$, we have $\pi_\lambda(\lambda)\leq \pi_\mu(\lambda)$.
Thus, we have to show that equality does not occur for $\lambda\neq\mu$.

Assume that $\pi_\lambda(\lambda)=\pi_\mu(\lambda)$ holds.
We will show that this implies $\lambda=\mu$.
By Prop.~\ref{prop:sesh-curve-1-submaximal} the submaximal curve $C$ computes the Seshadri constant in an open neighborhood $U$ at $L_\lambda$, hence
\be
\pi_\lambda(t)\leq \pi_\mu(t)\qquad \mbox{ for } t\in U\,.
\ee
This implies that the linear functions $\pi_{\lambda}$ and $\pi_\mu$ coincide, since otherwise we would have $\pi_{\lambda}(t)<\pi_\mu(t)$ for either $t<\lambda$ or $t>\lambda$, which is impossible because $\pi_{\lambda}$ computes the Seshadri function locally.
But by Prop.~\ref{prop:irred-pell-bound} the linear function $\ell_C$ only coincides with the Pell bound $\pi_\lambda$ and, thus, we have $\lambda=\mu$.

For the other implication, we argue as in the proof of Prop.~\ref{prop:crit-pell-irred2}:
For every
Seshadri curve $C$ of $L_\lambda$ there is a unique $\pi_\tau$ with $\ell_C=\pi_\tau$ and $\tau\in A_\lambda$.
Since $C$ computes the Seshadri constant of $L_\lambda$, the Pell bound $\pi_\tau$ computes the Seshadri constant in $\lambda$.
In particular, we have $\pi_\tau(\lambda)\leq \pi_\lambda(\lambda)$.
Since by assumption $\pi_\lambda(\lambda)<\pi_\mu(\lambda)$ for $\mu\ne\lambda$, we conclude that $\tau=\lambda$,
and therefore $\pi_\lambda=\ell_C$.
\end{proof}

So far, the assumption $\sqrt{L_\lambda^2}\notin\Q$ was crucial for our arguments since they depended on the existence of Pell divisors.
We will now show that the Seshadri constant can in fact be effectively computed for \textit{any} ample $\Q$-line bundle. This will complete the proof of Theorem~\ref{introthm:algorithm} stated in the introduction.

\begin{theorem}\label{thm:algorithm}
   There is an algorithm that
   computes the Seshadri constant
   of every given ample line bundle
   on
   principally polarized
   abelian surfaces with real multiplication.
\end{theorem}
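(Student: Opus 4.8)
The plan is to separate, for the given ample $\Q$-line bundle -- normalized to a representative $L_\lambda$ with $\lambda=p/q\in\mathcal N(X)\cap\Q$ -- the two possibilities for its general upper bound $\sqrt{L_\lambda^2}$. Since $(qL_\lambda)^2$ is an integer, computable from the intersection form in \eqnref{eq:intersection-matrices}, one first tests algorithmically whether $(qL_\lambda)^2$ is a perfect square, i.e.\ whether $\sqrt{L_\lambda^2}\in\Q$.

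If $\sqrt{L_\lambda^2}\notin\Q$, there is essentially nothing left to do, because Proposition~\ref{prop:crit-pell-irred2} already \emph{is} an algorithm: one reads off $q$ (the denominator of $\lambda$), solves the Pell equation $x^2-(qL_\lambda)^2y^2=1$ for its primitive solution $(l,k)$ (for instance by continued fractions), forms the linear function $\pi_\lambda$ and, via the closed formulas of Lemma~\ref{lem:interval-borders}, its submaximality interval $J_\lambda=(t_1,t_2)$; then one puts $s(\lambda)=\min\{\lambda-t_1,\,t_2-\lambda\}$, enumerates the finite set $A_\lambda$ of fractions in $\mathcal N(X)$ with denominator at most $\sqrt{11}/(s(\lambda)\sqrt e)$, and outputs $\min\{\pi_\mu(\lambda)\mid\mu\in A_\lambda\}$. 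Correctness is exactly Proposition~\ref{prop:crit-pell-irred2} (with finiteness resting on Corollary~\ref{cor:interval-finite-pell-bounds}), and as a byproduct Proposition~\ref{prop:crit-pell-seshadri} even identifies the Seshadri curves.

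If $\sqrt{L_\lambda^2}\in\Q$, the Pell machinery does not apply to $L_\lambda$ directly, so I would approximate from both sides. The rationals $\mu$ with $\sqrt{L_\mu^2}\notin\Q$ are dense in $\mathcal N(X)$ -- one may use, as in the proof of Theorem~\ref{thm:linear-segments-never-adjacent}, classes whose self-intersection is $\equiv2\tmod4$ and hence not a square -- so one picks sequences $\mu_n^-\uparrow\lambda$ and $\mu_n^+\downarrow\lambda$ of such rationals and, using the first case, computes $\varepsilon(L_{\mu_n^\pm})$ together with the curves realizing them. At stage $n$ this produces an upper bound $u_n$ -- the minimum of $\sqrt{L_\lambda^2}$ and of $\ell_C(\lambda)$ over all curves $C$ found so far with $\ell_C(\lambda)<\sqrt{L_\lambda^2}$ -- and, since the Seshadri function is concave \cite[Prop.~3.1]{Bauer-Schulz}, a lower bound $v_n$ equal to the value at $\lambda$ of the chord joining $(\mu_n^-,\varepsilon(L_{\mu_n^-}))$ and $(\mu_n^+,\varepsilon(L_{\mu_n^+}))$; by continuity $u_n\downarrow\varepsilon(L_\lambda)$ and $v_n\uparrow\varepsilon(L_\lambda)$. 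The governing dichotomy is: if $\varepsilon(L_\lambda)<\sqrt{L_\lambda^2}$, then by the corollary following Lemma~\ref{lem:4-curves-submaxinterval} the Seshadri function is near $\lambda$ the minimum of at most two linear functions, hence equal to a single $\ell_{C_i}$ on a small enough one-sided neighborhood; so $C_1$ is a Seshadri curve of $L_{\mu_n^-}$ for all large $n$, and symmetrically $C_2$ of $L_{\mu_n^+}$, so both are eventually produced by the first-case routine and then $u_n=\varepsilon(L_\lambda)$. If instead $\varepsilon(L_\lambda)=\sqrt{L_\lambda^2}$, then $\lambda$ lies in the nowhere-dense set $\set{t\in\mathcal N(X)\with\varepsilon(L_t)=\sqrt{L_t^2}}$, no curve with $\ell_C(\lambda)<\sqrt{L_\lambda^2}$ is ever seen, $u_n$ stays equal to $\sqrt{L_\lambda^2}$, and one outputs that value.

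The step I expect to be the main obstacle is making the termination of this second procedure rigorous: one must be able to \emph{certify}, after finitely many refinements, that the one-sided Seshadri curves have stabilized -- equivalently, to decide effectively whether $\lambda\in\set{t\in\mathcal N(X)\with\varepsilon(L_t)=\sqrt{L_t^2}}$ -- and to rule out that some as-yet-unseen submaximal curve, with very large denominator, still lowers the value. Here the non-adjacency statement of Proposition~\ref{prop:adjacent} should be essential (it forbids two submaximality-interval endpoints from coinciding at $\lambda$), together with the length bound of Lemma~\ref{lem:interval-length}. A cleaner alternative, which I would try first, is to establish an a priori bound on the denominator $q_C$ of any irreducible curve $C$ computing $\varepsilon(L_\lambda)$ -- in terms of $\lambda$ and $e$, exploiting $\lambda\in I_C=J_{\mu_C}$ and $\ell_C(\lambda)<\sqrt{L_\lambda^2}$ jointly -- which would collapse the perfect-square case to a finite search, exactly as in the first case.
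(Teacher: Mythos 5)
Your treatment of the case $\sqrt{L_\lambda^2}\notin\Q$ is exactly the paper's: Prop.~\ref{prop:crit-pell-irred2} is already an effective finite search, so nothing is missing there. The genuine gap is in the case $\sqrt{L_\lambda^2}\in\Q$. Your two-sided approximation scheme is not an algorithm: at every finite stage you have an upper bound $u_n$ and a lower bound $v_n$, but no certificate that the one-sided Seshadri curves have stabilized, and in particular no way to ever \emph{certify} $\varepsilon(L_\lambda)=\sqrt{L_\lambda^2}$ (in that case $u_n$ stays at $\sqrt{L_\lambda^2}$ and $v_n$ stays strictly below it forever, which is indistinguishable at finite stages from ``a submaximal curve with huge denominator has not been found yet''). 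You identify this yourself, and your proposed repair --- an a priori bound on the denominator of a curve computing $\varepsilon(L_\lambda)$ --- is indeed the right idea, but the ingredient needed to obtain it is missing from your sketch: note that Lemma~\ref{lem:interval-length} only bounds the length of a submaximality interval \emph{from above} in terms of the denominator, so to bound the denominator you need a \emph{lower} bound on the length of an interval on which any Seshadri curve of $L_\lambda$ is forced to be submaximal, and ``$\lambda\in I_C$ and $\ell_C(\lambda)<\sqrt{L_\lambda^2}$'' alone gives no such lower bound.

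The paper supplies precisely this missing ingredient by a quantitative gap below the maximal value: writing $L=qL_\lambda$ primitive, there is an even divisor $D\in|2L|^+$ with $L\cdot D/\mult_0 D\le\sqrt{L^2}$, and any Seshadri curve $C$ of $L$ must be a component of $D$ by \cite[Lemma~6.2]{Bauer:sesh-alg-sf}; hence $C\cdot L\le 2L^2$, the Seshadri quotient is a rational number with numerator at most $2L^2$, and since $\sqrt{L^2}\in\Q$ this forces
$$
\varepsilon(L)\;\le\;\frac{2L^2-1}{2\sqrt{L^2}}\qquad\text{whenever }\varepsilon(L)<\sqrt{L^2}.
$$
Combining this explicit gap at $\lambda$ with slope bounds $m_2\le m\le m_1$ for $\ell_C$ (obtained by computing, via the irrational case, Seshadri curves at two auxiliary rational points $\mu_1<\lambda<\mu_2$ and using concavity), one gets an explicitly computable interval $I$ around $\lambda$ on which $C$ must be submaximal; then Lemma~\ref{lem:interval-length} and Cor.~\ref{cor:interval-finite-pell-bounds} bound the denominators, the search becomes finite, and if none of the finitely many Pell bounds is submaximal at $\lambda$ one concludes $\varepsilon(L_\lambda)=\sqrt{L_\lambda^2}$ with proof. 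Without this gap estimate (or an equivalent effective bound), your second procedure does not terminate and the perfect-square case remains open.
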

\begin{proof}
If $L_\lambda$ is a $\Q$-line bundle such that $\sqrt{L_\lambda^2}\notin\Q$, then the assertion follows from the fact, that the set $A_\lambda$ from Prop.~\ref{prop:crit-pell-irred2} is finite.
Suppose then that $\sqrt{L_\lambda^2}\in\Q$.
We will construct a theoretical interval around $L_\lambda$, on which every Seshadri curve of $L_\lambda$ must be submaximal, if $\eps(L_\lambda)<\sqrt{L_\lambda^2}$.
By Cor.~\ref{cor:interval-finite-pell-bounds}, there are only finitely many Pell bounds on this interval, and
$\eps(L_\lambda)$ is the minimum of those.

Assume that the Seshadri constant satisfies $\varepsilon(L_\lambda)<\sqrt{L_\lambda^2}$.
Let $\lambda=\frac{p}{q}$ be a coprime representation.
Then, $L:=qL_\lambda$ is a primitive $\Z$-line bundle. Denote by $C$ any Seshadri curve of $L_\lambda$.
As explained in Sect.~\ref{sec:pell-curves-abelian}, there exists an effective Divisor $D\in |2L|^+$ such that $D$ satisfies ${D\cdot L}/{\mult_0(D)}\leq \sqrt{L^2}$.
As $C$ is the Seshadri curve of $L$, $C$ is a component of $D$ by \cite[Lemma~6.2]{Bauer:sesh-alg-sf}.
It follows that the intersection-number $C\cdot L$ is bounded by $D\cdot L=2L^2$.
As a consequence, the Seshadri constant can only take certain rational values:
\be
\varepsilon(L)\in \set{1\leq \frac{a}{b} < \sqrt{L^2} \with 1\leq a\leq 2L^2, b\in\N }\,.
\ee
Therefore, we find that the Seshadri constant is at most
\be
\varepsilon(L)\leq \frac{2L^2-1}{2\sqrt{L^2}}\,.
\ee
For the construction of the interval, we will give a lower and an upper bound for the slope of the linear function $\ell_C$.
To this end, we will chose any two rational numbers $\mu_i\in \mathcal N(X)$ with $\mu_1<\lambda<\mu_2$ and $\sqrt{L_{\mu_i}^2}\notin\Q$ for $i=1,2$.
Next, we compute a Seshadri curve
$C_i$ of $L_{\mu_i}$ using Prop.~\ref{prop:crit-pell-seshadri}.
We denote by $m_i$ the slope of the linear function $\ell_{C_i}$.
As the Seshadri function is a concave function, the slope $m$ of the linear function $\ell_C$ is bounded, $m_2\leq m\leq m_1$.
Let $r_i$ be the linear function passing through the point $(\lambda,({2L^2-1})/({2\sqrt{L^2}}))$ with slope $m_i$.
Then the function $u(t)=\max\set{r_1(t),r_2(t)}$ is an upper bound for $\ell_C$, since we have
\be
\ell_C(t) \leq r_1(t) \quad \mbox{ for } \lambda\leq t \qquad \mbox{ and } \qquad  \ell_C(s) \leq r_2(s) \quad \mbox{ for } s\leq \lambda\,.
\ee
Denote by $I$ the submaximality interval of $u$.
It follows that $C$ has to be submaximal on $I$, and so we have constructed a computable interval on which $C$ is submaximal.
Now, by following the same argument from Prop.~\ref{prop:crit-pell-irred2} we can compute the Seshadri constant of $L_\lambda$ by taking the minimum of  Pell bounds in $\lambda$, which are submaximal on $I$.
Clearly, if none of these Pell bounds are submaximal in $\lambda$, then the Seshadri constant satisfies $\varepsilon(L_\lambda)=\sqrt{L_\lambda^2}$.
\end{proof}

\begin{remark}
In the proof of Theorem~\ref{thm:algorithm} we have shown how to algorithmically distinguish the cases $\varepsilon(L_\lambda^2)<\sqrt{L_\lambda}$ and $\varepsilon(L_\lambda)=\sqrt{L_\lambda^2}$.
Both cases do, in fact, occur for line bundles $L$ with $\sqrt{L^2}\in\Q$:
Consider a principally polarized abelian surface $X$ with $\End(X)=\Z[\sqrt{2}]$.
Then the line bundle $L=2L_0+L_\infty$ satisfies $\varepsilon(L)=\sqrt{L^2}=2$, whereas the line bundle $L'=58L_0+L_\infty$ satisfies $\varepsilon(L')<\sqrt{L'^2}$, since the Seshadri curve $C$ of $L_0$ is also submaximal for $L'$.
\end{remark}

It is an important consequence of Thm.~\ref{thm:algorithm}
that the Seshadri function depends
only on the endomorphism ring of $X$, but not
on the isomorphism class of the surface:

\begin{theorem}\label{thm:coinciding-seshadri-functions}
Let $X$ and $Y$ be (not necessarily isomorphic) principally polarized abelian surfaces with real multiplication with $\End(X)\cong\End(Y)$.
Then their Seshadri functions coincide in the following sense:
Choosing suitable bases of the Néron-Severi groups $\NS(X)$ and $\NS(Y)$ 
yields an isomorphism $\Nef(X)\isom\Nef(Y)$, under which we have $\varepsilon_X=\varepsilon_Y$.
\end{theorem}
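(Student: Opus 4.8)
The plan is to trace through the entire machinery built up in Sections~\ref{sec:pell-curves-abelian}--\ref{sec:abelian-surfaces-real-mult} and verify that every numerical input into the computation of $\varepsilon(L_t)$ depends only on the abstract isomorphism type of $\End(X)$, together with the principal polarization. The key observation is that the isomorphism $\End(X)\cong\End(Y)$ of rings restricts to an isomorphism of the symmetric parts $\Endsym(X)\cong\Endsym(Y)$, and via the Rosati-type isomorphism $\varphi\colon\NS\to\Endsym$ (together with the chosen principal polarization as the distinguished element $\varphi^{-1}(1)$) this produces an isomorphism $\NS(X)\to\NS(Y)$ sending $L_0^X\mapsto L_0^Y$ and $L_\infty^X\mapsto L_\infty^Y$. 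Since the intersection form on $\NS$ is recovered from characteristic polynomials of endomorphisms (as recalled after \eqnref{eq:intersection-matrices}), this isomorphism is an isometry, hence matches the intersection matrices \eqnref{eq:intersection-matrices} case by case; in particular it identifies the nef intervals $\mathcal N(X)=\mathcal N(Y)$ and the general upper bound functions $t\mapsto\sqrt{L_t^2}$.

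Next I would invoke Thm.~\ref{thm:algorithm}. Its proof shows that $\varepsilon(L_\lambda)$ is computed by an explicit finite procedure whose only inputs are: (a) the intersection matrix of $(L_0,L_\infty)$, (b) the rational number $\lambda$, (c) Pell solutions of equations of the form $x^2-(qL_\lambda)^2y^2=1$, and (d) comparisons among finitely many Pell bounds $\pi_\mu$ on an explicitly bounded interval (via Cor.~\ref{cor:interval-finite-pell-bounds} and Prop.~\ref{prop:crit-pell-irred2}, or the interval-construction argument in the case $\sqrt{L_\lambda^2}\in\Q$). Every one of these quantities — the value $L_t^2$, the self-intersections $(qL_\lambda)^2$, the Pell bounds $\pi_\mu(t)=\tfrac{kqL_\mu\cdot L_t}{l}$, and the comparison $\pi_\lambda(\lambda)<\pi_\mu(\lambda)$ from Prop.~\ref{prop:crit-pell-seshadri} — is a function purely of the intersection form and of rational parameters. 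Since the isometry $\NS(X)\to\NS(Y)$ preserves the intersection form and sends $L_t^X$ to $L_t^Y$, it transports the entire algorithm from $X$ to $Y$ and yields $\varepsilon_X(L_t)=\varepsilon_Y(L_t)$ for all rational $t\in\mathcal N(X)=\mathcal N(Y)$.

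Finally, I would extend from rational to arbitrary real $t$ by continuity: the Seshadri function on an abelian surface is continuous \cite[Prop.~3.1]{Bauer-Schulz}, so agreement on the dense set of rationals forces agreement everywhere on $\mathcal N(X)$. This gives the desired identification $\varepsilon_X=\varepsilon_Y$ under the isomorphism $\Nef(X)\isom\Nef(Y)$.

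The main obstacle is point (a): one must be genuinely careful that the ring isomorphism $\End(X)\cong\End(Y)$, combined with the choice of principal polarizations, really does induce an \emph{isometry} of N\'eron--Severi lattices sending the distinguished bases to one another. The subtlety is that $\Endsym(X)$ as an abstract object does not a priori remember the intersection form on $\NS(X)$; what rescues the argument is precisely the remark after \eqnref{eq:intersection-matrices}, namely that the intersection pairing is determined by (analytic) characteristic polynomials of endomorphisms together with \cite[Prop.~5.2.3]{BL:CAV}, and these are preserved by any ring isomorphism. Once this lattice-theoretic point is nailed down, the rest is a matter of checking that each step of Thm.~\ref{thm:algorithm} is formulated purely in terms of the intersection form, which it is by construction.
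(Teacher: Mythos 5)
Your proposal is correct and takes essentially the same route as the paper: its proof simply observes that the algorithm of Thm.~\ref{thm:algorithm} uses only numerical data (the intersection matrices \eqnref{eq:intersection-matrices}, Pell solutions, and comparisons of Pell bounds) which is determined by $\End(X)$, so the Seshadri functions must agree. Your extra care about the induced isometry of the N\'eron--Severi lattices and the continuity extension from rational to real classes just makes explicit what the paper leaves implicit.
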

This implies Corollary~\ref{introcor:Endomorphism} stated in the introduction.
\begin{proof}
The proof of Thm.~\ref{thm:algorithm} shows that the numerical data
that enters the computation of the Seshadri functions stems
from the endomorphism ring. Therefore, an isometry of $\NS(X)$ that leaves
the ample cone invariant also leaves the Seshadri function invariant.
\end{proof}

\section{Fundamental cone and sample plots for Seshadri functions}\label{sec:sample-plots}
We will now determine the subgroup $G\subset\Aut(\NS(X))$ of isometries with respect to the intersection product that leave the Seshadri function on $\Amp(X)$ invariant.
This group gives rise to a decomposition of the ample cone into subcones on which $G$ acts transitively.

With respect to the basis $(L_0,L_\infty)$,
an automorphism $\varphi\in \Aut(\NS(X))$ is given by a matrix
$$M_\varphi=\matr{\alpha & \beta \\
      \gamma  & \delta}\in \mathrm {GL} _{2}(\Z)\,.$$
By Thm.~\ref{thm:coinciding-seshadri-functions} the Seshadri function remains invariant under the automorphism $M_\varphi$ if it is an isometry of $\NS(X)$
and additionally leaves the ample cone invariant.
These conditions can be expressed by:
\begin{itemize}\compact
\item[(i)] $L_0^2=(\alpha L_0 + \gamma L_\infty)^2$\,,
\item[(ii)] $L_\infty^2=(\beta L_0 + \delta L_\infty)^2$\,,
\item[(iii)] $L_0\cdot L_\infty=(\alpha L_0 + \gamma L_\infty)\cdot (\beta L_0 + \delta L_\infty)$\,,
\item[(iv)] $\alpha > 0$\,.
\end{itemize}
The conditions (i)--(iii) are equivalent to $\varphi$ being an isometry, whereas condition (vi) ensures that the ample cone is left invariant.

In the case of $\End(X)=\Z[\sqrt{e}]$ we find by solving (i)--(iv) that $M_\varphi$ is of the form
$$\matr{\alpha & e\,\beta \\
      \beta  & \alpha} \mbox{ or } \matr{\alpha & -e\,\beta \\
      \beta  & -\alpha}\,,\quad \mbox{ with } \alpha>0 \mbox{ and } \alpha^2 - e\beta^2=1\,.$$
Since any other Pell solution of $x^2-ey^2=1$ is generated by the minimal solution $(\alpha_0,\beta_0)$, the group $G$ is generated by
$$\varphi_0:=\matr{\alpha_0 & e\,\beta_0 \\
      \beta_0  & \alpha_0} \mbox{ and } \tau:=\matr{1 & 0 \\
      0  & -1}\,.$$
A line bundle $L=aL_0+bL_\infty$ is a principal polarization if and only if $(a,b)$ is a solution of Pell's equations $x^2-ey^2=1$ with $a>0$.
Therefore, we can express every principal polarization by $L_k:=x_kL_0+y_kL_\infty$, where $(x_k,y_k)$ satisfies
$$\matr{x_k \\
      y_k }=\matr{\alpha_0 & e\,\beta_0 \\
      \beta_0  & \alpha_0}^k \matr{1 \\
      0 }=\varphi_0^k\matr{1 \\
      0 }\,\qquad k\in\Z\,.$$
So we have $\varphi_0(L_k)=L_{k+1}$.
Next, we consider for $k\in\Z$ the subcone $\mathcal{D}_k\subset \Amp(X)$ generated by $L_k$ and $L_{k+1}$.
We have $\varphi_0^k(\mathcal{D}_0)=\mathcal{D}_k$.
Additionally, by also considering the automorphism $\tau$ we can further divide the subcone $\mathcal{D}_0$ into two subcones $\mathcal{D}_{0,1}$ and $\mathcal{D}_{0,2}$ as follows:
The automorphism $\varphi_0\circ \tau$
is of order two and maps the cone $\mathcal{D}_0$ onto itself.
The line bundle $L':=e\beta_0L_0+(\alpha_0-1)L_\infty$, which satisfies $\varphi_0\circ \tau(L')=L'$,
divides the subcone $\mathcal{D}_0$ into two subcones $\mathcal{D}_{0,1}$ and $\mathcal{D}_{0,2}$, where $\mathcal{D}_{0,1}$ is generated by $L_0$ and $L'$,
$$\mathcal{D}_{0,1}=\set{\lambda_1 L_0 + \lambda_2 L'\with \lambda_1,\lambda_2\ge 0}\,,$$
and $\mathcal{D}_{0,2}$ is generated by $L'$ and $L_1$.
The subcones $\mathcal{D}_{0,1}$ and $\mathcal{D}_{0,2}$ satisfy $\varphi_0\circ \tau(\mathcal{D}_{0,1})=\mathcal{D}_{0,2}$ and, again by construction of $\tau$, the Seshadri constants remain invariant.
We call $\mathcal{D}_{0,1}$
the \emph{fundamental cone} of $\Amp(X)$.
This cone corresponds to the interval $[0,\tfrac{\alpha_0-1}{e\beta_0}]$ in $\mathcal N(X)$.
The decomposition of the subcone $\mathcal{D}_0$ into $\mathcal{D}_{0,1}$ and $\mathcal{D}_{0,2}$ extends via $\varphi_0^k$ to every subcone $\mathcal{D}_k$.
After renumbering we obtain a decomposition of $\Amp(X)$ into subcones $\mathcal{C}_k$ with $\mathcal{C}_0=\mathcal{D}_{0,1}$.

We now deal with the
case of $\End(X)=\Z[\tfrac 12 + \tfrac 12 \sqrt{e}]$. In this case we find that $M_\varphi$ is given by
$$\matr{\alpha & \tfrac{e-1}{4}\,\beta \\
      \beta  & \alpha+\beta}\mbox{ or } \matr{\alpha & \alpha-\tfrac{e-1}{4}\,\beta \\
      \beta  & -\alpha}\,,\quad \mbox{ with } \alpha>0 \mbox{ and } \alpha^2 + \alpha\beta- \tfrac{e-1}{4}\beta^2=1\,.$$
Note that we have the following bijection
\be
\set{(x,y)\in\Z^2\,\vrule\, x^2-ey^2=4}
 &\stackrel{\displaystyle\sim}{\longrightarrow}&
\set{(x,y)\in\Z^2 \,\vrule\, x^2+xy-\tfrac{e-1}{4}y^2=1}\\
(x,y)&\mapsto &(\tfrac{x-y}{2},y)\,.
\ee
By \cite[Prop.~6.3.16]{Cohen:Number Theory} the set of solutions for the Pell-type equation $x^2-ey^2=4$ can be expressed through a minimal solution $(x_0,y_0)$ (we may assume $x_0>y_0>0$) as follows:
$$\set{\pm\frac{1}{2^k}\matr{x_0 & ey_0 \\
      y_0  & x_0}^k \matr{2 \\
      0 }\with  k\in\Z\,}.
$$
With some calculation, the set of solutions of $\alpha^2+\alpha\beta-\tfrac{e-1}{4}\beta^2=1$ can be determined as
$$\set{\pm\matr{\alpha_0 & \tfrac{e-1}{4}\beta_0 \\
      \beta_0  & \alpha_0+\beta_0}^k \matr{1 \\
      0 }\with k\in\Z},
$$
where $(\alpha_0,\beta_0):=(\tfrac{x_0-y_0}{2},y_0)$ and, hence, the group $G$ is generated by
$$\psi_0:=\matr{\alpha_0 & \tfrac{e-1}{4}\beta_0 \\
      \beta_0  & \alpha_0+\beta_0} \mbox{ and } \sigma:=\matr{1 & 1 \\
      0  & -1}\,.
$$
Using the exact same argument as before, we get a decomposition of the ample cone:
We can express every principal polarization by $L_k:=x_kL_0+y_kL_\infty$ with $(x_k,y_k)=\psi_0^k(1,0)$.
The subcones $\mathcal{D}_k$ generated by $L_k$ and $L_{k+1}$ satisfy $\psi_0^k(\mathcal{D}_0)=\mathcal{D}_k$.
Furthermore, $\psi_0\circ \sigma$ divides the subcone $\mathcal{D}_0$ into two subcones $\mathcal{D}_{0,1}$ and $\mathcal{D}_{0,2}$, which are generated by $L_0$ and $L':=(\alpha_0+1)L_0+\beta_0L_\infty$ and, respectively, $L'$ and $L_1$.
In this case, the fundamental cone $\mathcal{D}_{0,1}$ corresponds to the interval $[0,\tfrac{\beta_0}{\alpha_0+1}]$ in $\mathcal N(X)$.

The considerations above prove Theorem~\ref{introthm:cone} stated in the introduction.

\bigskip\noindent
   We now
   provide some sample plots in order to illustrate the behavior
   of Seshadri functions.
   Concretely, we compute for fixed $e$
   all Seshadri curves $C=qL_0+pL_\infty$ with $q\leq 3.000$
   that are contained in $\mathcal{C}_0$.
   From this set of curves we derive further Seshadri curves by
   applying the automorphisms in $G$.
   In the pictures, the dotted lines indicate the fundamental interval from which the complete Seshadri function can be computed by Thm.~\ref{introthm:cone}.

   The values of $e$ in figures~\ref{fig:2}--\ref{fig:33-2} are chosen in such a way
   that they illustrate different kinds of behavior:
   In the case of $\Z[\sqrt{2}]$ there exist $\Q$-line bundles $L_\lambda$ with $\varepsilon(L_\lambda)=\sqrt{L_\lambda^2}\in\Q$ whereas in the case of $\Z[\sqrt{5}]$ no such bundles exist.
   These line bundles generate \engqq{gaps} in the graph, because they do not give rise to a linear segment.
   In fact, at each of these gaps there are infinitely many linear segments which converge from both sides.
   In the plots for the case $\End(X)=\Z[\tfrac12+\tfrac12 \sqrt{e}]$ for $e=5$ and $33$
   the ample cone is not symmetric at $0$ in the case of $\End(X)=\Z[\tfrac12+\tfrac12 \sqrt{e}]$.

\begin{figure}[h]
\begin{tikzpicture}[x=9.0cm,y=2.5cm]
\draw[color=black] (0.7,1.5) node[above,draw] {$\Z[\sqrt{2}]$};

\draw[->,color=black] (-0.8,0.) -- (0.8,0.) node[right] {$t$};
\draw[->,color=black] (0,0) -- (0,1.6) node[above] {$\varepsilon(L_t)$};
\foreach \x in {-0.7,-0.6,-0.5,-0.4,-0.3,-0.2,-0.1,0,0.1,0.2,0.3,0.4,0.5,0.6,0.7}
\draw[shift={(\x,0)},color=black] (0pt,2pt) -- (0pt,-2pt) node[below] {\footnotesize $\x$};
\foreach \y in {0.2,0.4,0.6,0.8,1.,1.2,1.4}
\draw[shift={(0,\y)},color=black] (2pt,0pt) -- (-2pt,0pt) node[left] {\footnotesize $\y$};

\draw[dotted](0.5,0)--(0.5,4/3);
\input{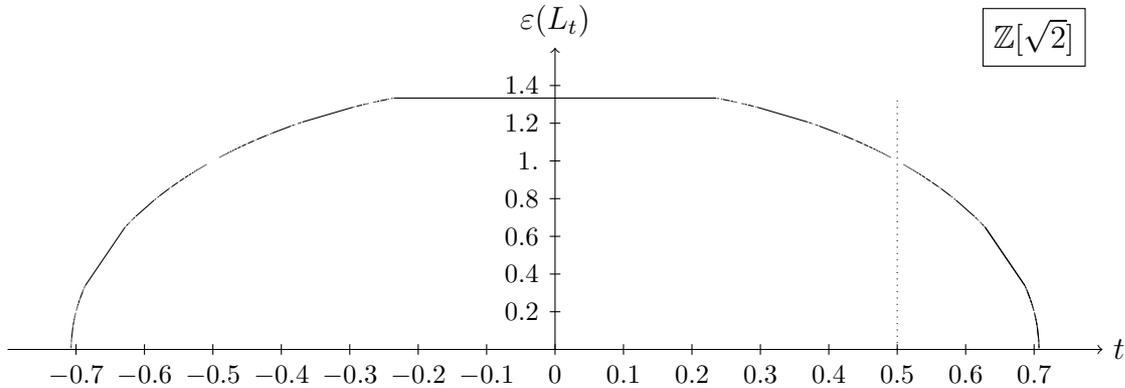}
\end{tikzpicture}
\caption{\label{fig:2}The Seshadri function of an abelian surface with real multiplication in $\Z[\sqrt{2}]$.}
\end{figure}

\begin{figure}[h]
\begin{tikzpicture}[x=12.0cm,y=2.5cm]
\draw[color=black] (0.55,1.5) node[above,draw] {$\Z[\sqrt{5}]$};

\draw[->,color=black] (-0.6,0.) -- (0.6,0.) node[right] {$t$};
\draw[->,color=black] (0,0) -- (0,1.6) node[above] {$\varepsilon(L_t)$};
\foreach \x in {-0.5,-0.4,-0.3,-0.2,-0.1,0,0.1,0.2,0.3,0.4,0.5}
\draw[shift={(\x,0)},color=black] (0pt,2pt) -- (0pt,-2pt) node[below] {\footnotesize $\x$};
\foreach \y in {0.2,0.4,0.6,0.8,1.,1.2,1.4}
\draw[shift={(0,\y)},color=black] (2pt,0pt) -- (-2pt,0pt) node[left] {\footnotesize $\y$};

\draw[dotted](0.4,0)--(0.4,4/3);
\input{pointlist-e-5.tex}
\end{tikzpicture}
\caption{\label{fig:5}The Seshadri function of an abelian surface with real multiplication in $\Z[\sqrt{5}]$.}
\end{figure}

\begin{figure}[h]
\begin{tikzpicture}[x=6cm,y=2.5cm]
\draw[color=black] (1.6,1.7) node[above,draw] {$\Z[\tfrac12 + \tfrac12\sqrt{5}]$};

\draw[->,color=black] (-0.7,0.) -- (1.7,0.) node[right] {$t$};
\draw[->,color=black] (0,0) -- (0,1.9) node[above] {$\varepsilon(L_t)$};
\foreach \x in {-0.6,-0.4,-0.2,0,0.2,0.4,0.6,0.8,1,1.2,1.4,1.6}
\draw[shift={(\x,0)},color=black] (0pt,2pt) -- (0pt,-2pt) node[below] {\footnotesize $\x$};
\foreach \y in {0.2,0.4,0.6,0.8,1.,1.2,1.4,1.6,1.8}
\draw[shift={(0,\y)},color=black] (2pt,0pt) -- (-2pt,0pt) node[left] {\footnotesize $\y$};

\draw[dotted](0.5,0)--(0.5,1.7);
\input{pointlist-e-5-h.tex}
\end{tikzpicture}
\caption{\label{fig:5-2}The Seshadri function of an abelian surface with real multiplication in $\Z[\tfrac12 + \tfrac12\sqrt{5}]$.}
\end{figure}

\begin{figure}[h]
\begin{tikzpicture}[x=18cm,y=2.5cm]
\draw[color=black] (.4,1.7) node[above,draw] {$\Z[\tfrac12 + \tfrac12\sqrt{33}]$};

\draw[->,color=black] (-0.325,0.) -- (0.45,0) node[right] {$t$};
\draw[->,color=black] (0,0) -- (0,1.9) node[above] {$\varepsilon(L_t)$};
\foreach \x in {-0.3,-0.2,-0.1,0,0.2,0.4,0.3,0.1}
\draw[shift={(\x,0)},color=black] (0pt,2pt) -- (0pt,-2pt) node[below] {\footnotesize $\x$};
\foreach \y in {0.2,0.4,0.6,0.8,1.,1.2,1.4,1.6,1.8}
\draw[shift={(0,\y)},color=black] (2pt,0pt) -- (-2pt,0pt) node[left] {\footnotesize $\y$};

\draw[dotted](2/5,0)--(2/5,1.5);
\input{pointlist-e-33-h.tex}
\end{tikzpicture}
\caption{\label{fig:33-2}The Seshadri function of an abelian surface with real multiplication in $\Z[\tfrac12 + \tfrac12\sqrt{33}]$.}
\end{figure}

The Seshadri function for $e=5$ consists only of linear segments which by Thm.~\ref{thm:one-submax-curve-lb} are never adjacent to each other.
In the case of $e=33$ there exist line bundles with two submaximal curves, e.g., at $t=0.37$.
In fact,
calculations show that there are chains of linear segments which overlap.
It should also be noted that the size of the fundamental interval depends heavily on the minimal solution of $x^2-ey^2=1$ or, respectively, $x^2+xy-\tfrac{e-1}{4}x^2=1$:
In the first three cases the minimal solutions are small, which leads to a small fundamental interval.
However,
experience with further examples has shown that the limit of the fundamental interval can be arbitrarily close to the interval limit of $\mathcal N(X)$.

\section{Distinguishing the cases of one and two submaximal curves}\label{sec:2-submaximal}

In this section we derive a method that allows one to distinguish whether all line bundles
on $X$ have at most one submaximal curve or if there exists a line bundle which has 2 submaximal curves.
By Thm.~\ref{thm:one-submax-curve-lb} we already know that there are infinitely many cases for $\End(X)=\Z[\tfrac 12 + \tfrac 12 \sqrt{e}]$, where every line bundle has at most one submaximal curve.
We will show that the case with two submaximal curves also appears infinitely many times.

\begin{proposition}\label{prop:crit-overlapping-curve}
There exists a line bundle on $X$ that has two submaximal curves if and only if there exist two Pell bounds $\pi_\lambda$ and $\pi_\mu$ such that the following two conditions are met:
\begin{enumerate}\compact
\item[(i)] Their submaximality intervals $J_\lambda$ and $J_\mu$ intersect and one is not contained in the other.
\item[(ii)] There does not exist
a Pell bound $\pi_\tau$ such that the submaximality interval $J_\tau$ contains  $J_\lambda\cup J_\mu$.
\end{enumerate}
\end{proposition}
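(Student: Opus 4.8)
The plan is to exploit the dictionary between submaximal irreducible curves and Pell bounds provided by Prop.~\ref{prop:irred-pell-bound}, together with the reducibility criterion of Lemma~\ref{lem:reducible-criterion} and the local finiteness statement of Lemma~\ref{lem:4-curves-submaxinterval}.

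For the forward implication, I would start from an ample line bundle $L_t$ carrying two submaximal irreducible curves $C_1\ne C_2$, say $C_i\equiv q_iL_0+p_iL_\infty$. Since $X$ contains no elliptic curves, $\sqrt{C_i^2}$ is irrational by Prop.~\ref{prop:C-m}, so Prop.~\ref{prop:irred-pell-bound} gives $\ell_{C_i}=\pi_{\lambda_i}$ with $\lambda_i=p_i/q_i$, and hence $I_{C_i}=J_{\lambda_i}$. Here $\lambda_1\ne\lambda_2$, since otherwise $C_1$ and $C_2$ would have the same numerical class, forcing $C_2$ to be a component of $C_1$ by \cite[Lemma~5.2]{Bauer:sesh-alg-sf}, hence $C_1=C_2$. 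Both curves being submaximal for $L_t$ gives $t\in J_{\lambda_1}\cap J_{\lambda_2}$, so the intervals meet; and if one were contained in the other, say $J_{\lambda_1}\subsetneq J_{\lambda_2}=I_{C_2}$, then Lemma~\ref{lem:reducible-criterion} would make $C_1$ reducible. This is (i). For (ii), if some Pell bound $\pi_\tau$ had $J_\tau\supseteq J_{\lambda_1}\cup J_{\lambda_2}$, then by (i) one gets $I_{C_1}=J_{\lambda_1}\subsetneq J_\tau$, and since $J_\tau$ is contained in the submaximality interval of any Pell divisor of $L_\tau$, Lemma~\ref{lem:reducible-criterion} would again force $C_1$ to be reducible.

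For the converse implication, I would argue by contradiction: assume (i), (ii) and that no ample line bundle on $X$ has two submaximal curves. By (i), after possibly swapping $\lambda$ and $\mu$, I may write $J_\lambda=(a_1,b_1)$ and $J_\mu=(a_2,b_2)$ with $a_1<a_2<b_1<b_2$, so that $J_\lambda\cup J_\mu=(a_1,b_2)$ is a connected interval. On $J_\lambda$ and $J_\mu$ the relevant Pell bound is a strict upper bound for $\varepsilon$, so $\varepsilon(L_t)<\sqrt{L_t^2}$ for every $t\in(a_1,b_2)$; hence every $L_t$ in this range has a unique submaximal curve, which is its unique Seshadri curve. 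Fixing $t_0\in(a_1,b_2)$ with Seshadri curve $C$, I would study the touching set $T=\{t\in(a_1,b_2)\mid\varepsilon(L_t)=\ell_C(t)\}$; since $\varepsilon$ is concave with $\varepsilon\le\ell_C$ pointwise, $T$ is a nonempty subinterval, closed in $(a_1,b_2)$. The crux is to show $T=(a_1,b_2)$: if $T$ were proper it would have an endpoint $e\in(a_1,b_2)$, and choosing a sequence $s_n\to e$ in $(a_1,b_2)\setminus T$, the Seshadri curve of $L_{s_n}$ is distinct from $C$; by Lemma~\ref{lem:4-curves-submaxinterval} applied to $D=C$ (which is submaximal at $e\in I_C$) only finitely many curves can occur, so after passing to a subsequence it is a single curve $C'\ne C$ submaximal for all $L_{s_n}$. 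Passing to the limit in $\ell_{C'}(s_n)=\varepsilon(L_{s_n})\le\ell_C(s_n)$, and using $\ell_{C'}\ge\varepsilon$, forces $\ell_{C'}(e)=\varepsilon(L_e)=\ell_C(e)$, so $C$ and $C'$ would be two distinct submaximal curves for $L_e$ --- against the standing assumption. Hence $\varepsilon=\ell_C$ on $(a_1,b_2)$, so $C$ is submaximal on all of $J_\lambda\cup J_\mu$; writing $\ell_C=\pi_\tau$ via Prop.~\ref{prop:irred-pell-bound}, the Pell bound $\pi_\tau$ satisfies $J_\tau=I_C\supseteq J_\lambda\cup J_\mu$, contradicting (ii).

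The forward implication is essentially bookkeeping once Prop.~\ref{prop:irred-pell-bound} and Lemma~\ref{lem:reducible-criterion} are available, so I expect the main obstacle to lie in the identity $T=(a_1,b_2)$ in the converse, i.e.\ in showing that the no-two-curves hypothesis actually forces the Seshadri function to be linear on the whole overlap $J_\lambda\cup J_\mu$. This is where the finiteness from Lemma~\ref{lem:4-curves-submaxinterval} must be combined with a limiting argument disciplined by concavity, and where one has to keep track of the fact that distinct submaximal curves define distinct linear functions (again by \cite[Lemma~5.2]{Bauer:sesh-alg-sf}). Smaller points still requiring care are the case distinction behind the relabelling in the converse and the degenerate possibility $T=\{t_0\}$, both handled by running the argument on either side of $t_0$.
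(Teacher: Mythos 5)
Your proof is correct, and the forward implication is essentially the paper's own argument (Prop.~\ref{prop:irred-pell-bound} to identify $\ell_{C_i}$ with Pell bounds, Lemma~\ref{lem:reducible-criterion} for non-containment and for ruling out a dominating $\pi_\tau$ via a Pell divisor of $L_\tau$). The converse, however, is where you genuinely diverge. The paper argues directly: it picks a Seshadri curve $C_1$ of some $L_{t_1}$ with $t_1$ in the (connected) union $J_\lambda\cup J_\mu$, notes that by (ii) and Prop.~\ref{prop:irred-pell-bound} the interval $I_{C_1}$ cannot cover the whole union, and then slides toward the endpoint of $I_{C_1}$ lying inside the union: there $\ell_{C_1}$ approaches the general upper bound while $\min\{\pi_\lambda,\pi_\mu\}$ stays strictly below it, so at a nearby $t_2$ the curve $C_1$ is submaximal but no longer computes $\varepsilon(L_{t_2})$, and the Seshadri curve of $L_{t_2}$ furnishes the second submaximal curve. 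You instead run a contrapositive: assuming no bundle has two submaximal curves, you show via the concavity/touching-set argument, with Lemma~\ref{lem:4-curves-submaxinterval} supplying the finiteness needed for the limiting step, that a single curve $C$ computes $\varepsilon$ on all of $J_\lambda\cup J_\mu$, whence $\ell_C=\pi_\tau$ violates (ii). Both routes rest on the same unproved-but-standard input (a submaximal Seshadri constant is computed by a curve), which the paper also uses without comment; in your version the points where this is invoked can be taken rational, so nothing is lost. The paper's construction is shorter and needs curve existence only at two points; yours is heavier but yields, as a by-product, the structural statement that under the no-two-curves hypothesis the Seshadri function is a single linear function across the entire overlap. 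One small point you (like the paper) gloss over: Lemma~\ref{lem:reducible-criterion} requires \emph{strict} containment, so the case $J_{\lambda_1}=J_{\lambda_2}$ must be excluded separately; this follows since a Pell bound is determined by its submaximality interval (it is the chord of $t\mapsto\sqrt{L_t^2}$ over that interval), so equal intervals would force $\pi_{\lambda_1}=\pi_{\lambda_2}$ and then $\lambda_1=\lambda_2$ by Prop.~\ref{prop:irred-pell-bound}, contrary to your observation that distinct curves give distinct $\lambda$'s.
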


\begin{proof}
Assume that there exists a line bundle $L$ on $X$ with two submaximal curves $C_1$ and $C_2$.
The linear functions $\ell_{C_1}$ and $\ell_{C_2}$ are Pell bounds by Prop.~\ref{prop:irred-pell-bound}, and their submaximality intervals $I_{C_1}$ and $I_{C_2}$ must intersect, because $C_1$ and $C_2$ are both $L$-submaximal.
By Lemma~\ref{lem:reducible-criterion} one submaximality interval can not be contained in the other.
Assume that there exists a Pell bound $\pi_\tau$ such that $I_{C_1}\cap I_{C_2} \subset J_\tau$.
Then any Pell divisor $P$ of $L_\tau$ is submaximal on $I_{C_1}\cap I_{C_2}$, and therefore $C_1$ and $C_2$ are reducible by Lemma~\ref{lem:reducible-criterion}, a contradiction.

Suppose now that there exist two Pell bounds $\pi_\lambda$ and $\pi_\mu$ such that (i) and (ii) holds.
The Pell bounds yields an upper bound for the Seshadri function in $J_\lambda\cup J_\mu$: We have
\be
\varepsilon(t)\leq \min\{\pi_\lambda(t),\pi_\mu(t)\}< \sqrt{L_t^2}
\righttext{for $t\in J_\lambda\cup J_\mu$.}
\ee
Let $C_1$ be a Seshadri curve
for a line bundle $L_{t_1}$ with $t_1\in J_\lambda\cup J_\mu$.
Due to (ii) the submaximality interval $I_{C_1}$ of $C_1$ cannot cover the complete interval $J_\lambda\cup J_\mu$.
Therefore, by continuity
there exists a $t_2\in (J_\lambda\cup J_\mu) \cap I_{C_1}$ such that
\be
\varepsilon(L_{t_2})\leq \min\{\pi_\lambda(t_2),\pi_\mu(t_2)\} < \frac{C_1\cdot L_{t_2}}{\mult_0 C_1} < \sqrt{L_{t_2}^2}\,,
\ee
i.e., $C_1$ is submaximal for $L_{t_2}$ but does not compute its Seshadri constant.
But the Seshadri constant of $L_{t_2}$ is computed by a curve, and thus there exists for $L_{t_2}$ another submaximal curve $C_2$ that computes the Seshadri constant.
It follows that $L_{t_2}$ has two submaximal curves.
\end{proof}

The criterion in Prop.~\ref{prop:crit-overlapping-curve} provides us with a
numerical method to search for line bundles with two submaximal curves:
First, we search for Pell bounds whose submaximality intervals intersect.
After that, one checks by using Prop.~\ref{prop:crit-pell-irred2} whether there exists another Pell bound which contains both intervals.
Using computer-assisted computation, this yields the following:

\begin{proposition}\label{prop:overlapping-curves-50.000}
Suppose that $\End(X)=\Z[\tfrac12 + \tfrac12 \sqrt{e}]$ for a non-square integer $e$ with $0<e\le 25.000$, such that we have $e\equiv 1$ modulo $4$ and $e$ \textit{does not} have a prime factor $p$ with $p\equiv 5$ or $7$ modulo $8$.
Then there exists a line bundle on $X$ with two submaximal curves.
\end{proposition}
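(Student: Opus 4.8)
The plan is to derive the statement from the numerical criterion of Prop.~\ref{prop:crit-overlapping-curve} and to verify its hypotheses, for each admissible $e$, by an exact finite computation. Fix such an $e$: a non-square integer with $0<e\le 25000$, $e\equiv 1$ modulo $4$, all of whose prime factors are $\equiv 1$ or $3$ modulo $8$, so that Thm.~\ref{thm:one-submax-curve-lb} does \emph{not} apply. By Prop.~\ref{prop:crit-overlapping-curve} it is enough to exhibit two Pell bounds $\pi_\lambda$ and $\pi_\mu$, with $\lambda,\mu$ rational points of $\mathcal N(X)$ and $\sqrt{L_\lambda^2},\sqrt{L_\mu^2}\notin\Q$, such that: (i) the submaximality intervals $J_\lambda$ and $J_\mu$ overlap with neither contained in the other; and (ii) no Pell bound $\pi_\tau$ satisfies $J_\tau\supseteq J_\lambda\cup J_\mu$.

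First I would make both conditions effectively decidable. For $\lambda=p/q$ the endpoints of $J_\lambda$ are given by the closed formula in Lemma~\ref{lem:interval-borders}, once the Pell equation $x^2-(qL_\lambda)^2y^2=1$ has been solved; in particular these endpoints lie in $\Q(\sqrt e)$ and are known exactly. Hence every comparison entering (i) and (ii) --- overlap, containment, strict containment --- reduces to deciding an inequality between two numbers of the form $a+b\sqrt e$ with $a,b\in\Q$, which can be carried out by exact arithmetic. For (ii) the crucial point is finiteness: if (i) holds then $J_\lambda\cup J_\mu$ is an interval of some definite length $s>0$, any $\pi_\tau$ with $J_\tau\supseteq J_\lambda\cup J_\mu$ is submaximal on that interval, and by Lemma~\ref{lem:interval-length} (together with Cor.~\ref{cor:interval-finite-pell-bounds}) the denominator of such a $\tau$ is bounded by $\sqrt{11}/(s\sqrt e)$. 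Thus (ii) amounts to forming $J_\tau$ for each of the finitely many $\tau=a/b\in\mathcal N(X)$ with $b\le\sqrt{11}/(s\sqrt e)$ and checking that none contains $J_\lambda\cup J_\mu$.

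With this machinery, for each admissible $e$ one enumerates candidate pairs $(\lambda,\mu)$ by increasing denominators (a natural starting point being $\lambda=0$ together with a second Pell bound whose interval extends past $I_0$ at one end); the enumeration terminates because, again by Lemma~\ref{lem:interval-length}, only boundedly many Pell bounds are submaximal on any fixed subinterval. For each pair satisfying (i) one runs the finite test for (ii); as soon as a pair passes both, Prop.~\ref{prop:crit-overlapping-curve} furnishes a line bundle with two submaximal curves, and the proposition is proved for that $e$. Running this over all admissible $e\le 25000$ produces such a pair in every case; the resulting witnesses can be tabulated, and are consistent with the sample plots (for instance the case $e=33$ in Fig.~\ref{fig:33-2}, where overlapping linear segments are visible near $t=0.37$).

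The \emph{main obstacle} is computational rigor rather than mathematical depth: one must ensure that all arithmetic is performed exactly in $\Q(\sqrt e)$, never in floating point, so that the strict inequalities defining the submaximality intervals and the containments in condition (ii) are decided correctly, and that the denominator bound of Lemma~\ref{lem:interval-length} is applied with the correct constant so that no competing Pell bound $\pi_\tau$ is overlooked. Granting this, the computation is a large but entirely routine enumeration.
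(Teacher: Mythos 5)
Your proposal is correct and follows essentially the same route as the paper: the paper also deduces the statement from the criterion of Prop.~\ref{prop:crit-overlapping-curve}, searching for two Pell bounds with overlapping submaximality intervals and then using the finiteness coming from Lemma~\ref{lem:interval-length} (via Prop.~\ref{prop:crit-pell-irred2}/Cor.~\ref{cor:interval-finite-pell-bounds}) to verify by computer that no single Pell bound covers their union, for every admissible $e\le 25.000$. Your additional remarks on exact arithmetic in $\Q(\sqrt e)$ and the explicit denominator bound just make the computer-assisted step precise, which is exactly what the paper's computation relies on.
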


Theorem~\ref{thm:one-submax-curve-lb} and the previous proposition suggest the following conjecture:

\begin{conjecture}\label{conj:cases}
Let $L$ be any ample $\Q$-line bundle on $X$.
Then there exists at most one irreducible curve $C$ that is submaximal for $L$ if and only if $\End(X)$ satisfies either
\begin{itemize}\compact
\item $\End(X)=\Z[\sqrt{e}]$ for a non-square integer $e>0$, or
\item $\End(X)=\Z[\tfrac12 + \tfrac12 \sqrt{e}]$ for a non-square integer $e>0$, such that $e\equiv 1$ modulo $4$ and $e$ has a prime factor $p$ with $p\equiv 5$ or $7$ modulo $8$.
\end{itemize}
\end{conjecture}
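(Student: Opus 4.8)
\medskip
\noindent Only the forward implication of Conjecture~\ref{conj:cases} is open: its reverse, \emph{``ring of the listed form $\Rightarrow$ every ample line bundle has at most one submaximal curve''}, is exactly Thm.~\ref{thm:one-submax-curve-lb}. In contrapositive form, what remains is to show that if $\End(X)=\Z[\tfrac12+\tfrac12\sqrt e]$ with $e\equiv 1\tmod 4$ and every prime factor $p$ of $e$ satisfies $p\equiv 1$ or $3\tmod 8$, then some ample line bundle on $X$ carries two submaximal curves. Since $p\equiv 1,3\tmod 8$ is equivalent to $-2$ being a quadratic residue modulo $p$, the hypothesis says precisely that $-2$ is a square modulo $e$. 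The plan is to verify, for all such $e$, the numerical criterion of Prop.~\ref{prop:crit-overlapping-curve}: produce two rationals $\lambda_1,\lambda_2\in\mathcal N(X)$ whose Pell-bound submaximality intervals $J_{\lambda_1},J_{\lambda_2}$ intersect with neither contained in the other, and such that no third Pell bound $\pi_\tau$ has $J_\tau\supseteq J_{\lambda_1}\cup J_{\lambda_2}$; then Prop.~\ref{prop:crit-overlapping-curve} yields a line bundle with two submaximal curves.

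\textbf{Step 1: constructing the overlapping pair.} Here one runs the residue computation from the proof of Thm.~\ref{thm:one-submax-curve-lb} in reverse. With $-2$ a square modulo $e$, the congruences (I)--(III) that were contradictory in the case $p\equiv 5,7\tmod 8$ become solvable, and the task is to lift such a solution to integers: one wants $a_i,b_i$ and odd $m_i>1$ with $L_i:=a_iL_0+b_iL_\infty$ ample, $L_i^2=m_i^2-1$, and $L_1\cdot L_2=m_1m_2$, and then to set $\lambda_i:=b_i/a_i$. Because $L_i^2=m_i^2-1$ is never a perfect square, $\sqrt{L_{\lambda_i}^2}\notin\Q$, so the Pell bounds $\pi_{\lambda_1},\pi_{\lambda_2}$ are defined; the relation $L_1\cdot L_2=m_1m_2$ together with the explicit endpoint formulas of Lemma~\ref{lem:interval-borders} should force these two lines to be distinct and to cross strictly inside $J_{\lambda_1}\cap J_{\lambda_2}$, which is condition~(i). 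The delicate point already at this stage is that solvability modulo each $p\mid e$ must be promoted to an \emph{integral} solution of the quadratic system for which, in addition, $L_1$ and $L_2$ are ample; this is naturally handled by exhibiting an explicit family $(\lambda_1(e),\lambda_2(e))$ whose Pell data vary polynomially along arithmetic progressions of $e$ --- which is what makes a sequence $e_n$ of the kind in Prop.~\ref{prop:sequence-overlapping} tractable --- but covering \emph{every} admissible $e$ by such a closed construction already requires a genuinely new idea.

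\textbf{Step 2: excluding a dominating Pell bound.} This is the main obstacle. By Lemma~\ref{lem:interval-length} any Pell bound $\pi_\tau$ with $J_\tau\supseteq J_{\lambda_1}\cup J_{\lambda_2}$ has denominator bounded in terms of $e$ and the gap $|\lambda_1-\lambda_2|$, so only finitely many $\tau$ must be ruled out; for each, the condition to check is $\pi_\tau(\lambda_i)>\pi_{\lambda_i}(\lambda_i)$, i.e.\ that $\pi_\tau$ does not already compute $\eps(L_{\lambda_i})$, which by Prop.~\ref{prop:crit-pell-irred2} is tested with finitely many further Pell values. For $e\le 25\,000$ this is exactly the machine verification behind Prop.~\ref{prop:overlapping-curves-50.000}; a proof valid for all $e$ needs a \emph{uniform} lower bound on these competing values. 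The natural route is to push the competitors into the fundamental cone of Sect.~\ref{sec:sample-plots} via the $G$-action, reducing to a bounded piece of $\mathcal N(X)$, and then to control them through the reduction theory of the binary form $x^2+xy-\tfrac{e-1}{4}y^2$, exploiting that the minimal Pell solution governs the size of every $J_\tau$. Absent such a uniform estimate --- and absent the explicit family from Step~1 --- the assertion is established only on a finite range of $e$, which is why Conjecture~\ref{conj:cases} is stated as a conjecture rather than a theorem.
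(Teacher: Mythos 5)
You have assessed the situation correctly, and it is worth being explicit about what that means: the statement you were given is stated in the paper as Conjecture~\ref{conj:cases}, and the paper contains no proof of it. What is actually proved there is precisely what you cite: the implication \engqq{ring of the listed form $\Rightarrow$ at most one submaximal curve} is Thm.~\ref{thm:one-submax-curve-lb}, while the remaining implication is supported only by the computer verification of Prop.~\ref{prop:overlapping-curves-50.000} (for $e\le 25.000$) and by the infinite family $e_n=1+8n^2$ of Prop.~\ref{prop:sequence-overlapping}, both obtained through the purely numerical criterion of Prop.~\ref{prop:crit-overlapping-curve} around which your plan is organized. Your Step 1 is in fact very close to what the paper does in its partial result: the bundles $2nL_0+L_\infty$ and $(2n-1)L_0+L_\infty$ used in Prop.~\ref{prop:sequence-overlapping} satisfy exactly the integral system $L_i^2=m_i^2-1$, $L_1\cdot L_2=m_1m_2$ you propose to solve, and the exclusion of a dominating Pell bound there is your Step 2, carried out by the length estimate of Lemma~\ref{lem:interval-length}. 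So your reduction, your identification of the open direction, and your diagnosis of where uniformity in $e$ is missing all match the paper's state of knowledge.

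By the same token, however, your submission is a research plan and not a proof: neither the lifting in Step 1 (mod-$e$ solvability of the residue system to an integral, ample solution for \emph{every} admissible $e$, with the Pell-bound intervals actually overlapping --- note that $L_1\cdot L_2=m_1m_2$ alone does not guarantee the overlap, which in Prop.~\ref{prop:sequence-overlapping} is checked by an explicit evaluation at $t=\tfrac{2}{4n-1}$) nor the uniform exclusion of a dominating Pell bound in Step 2 is carried out, and you say so yourself. Two minor corrections to the sketch: the parity requirement \engqq{$m_i$ odd} is not forced in the case $\End(X)=\Z[\tfrac12+\tfrac12\sqrt e]$ (parity entered only in the $\Z[\sqrt e]$ case of Thm.~\ref{thm:one-submax-curve-lb}); and since Prop.~\ref{prop:crit-overlapping-curve} is purely numerical, you do not need to worry about producing actual curves from the numerical classes --- the criterion itself supplies the two submaximal curves once conditions (i) and (ii) are verified. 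The genuinely missing ingredient remains the one you name: a uniform-in-$e$ argument replacing the finite computation, which neither you nor the paper provides; this is exactly why the statement is a conjecture.
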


\begin{remark}
One can show by applying well-known results on quadratic residues and binary quadratic forms
(see e.g. \cite[Prop.~2.2.4]{Cohen:Number Theory} and \cite[Lemma~2.5]{Cox:Number Theory})
that the following conditions are equivalent for a non-square integer $e$ with $e\equiv 1$ modulo $4$:
\begin{enumerate}\compact
\item[(i)] $e$ does not have any prime factor $p$ with $p\equiv 5$ or $7$ modulo $8$.
\item[(ii)] $-2$ is a quadratic residue modulo $e$.
\item[(iii)] $e=A^2+8B^2$ for some $A,B\in \N$ with $\gcd(A,B)=1$.
\end{enumerate}
\end{remark}

Finally, we will show that the case with two submaximal curves occurs infinitely often.

\begin{proposition}\label{prop:sequence-overlapping}
Let $e_n:=1+8n^2$.
If $e_n$ is not a perfect square, then
every principally polarized abelian surface with $\End(X)=\Z[\tfrac12 + \tfrac12\sqrt{e_n}]$ has a line bundle with two submaximal curves.
\end{proposition}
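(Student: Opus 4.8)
Apply the criterion of Proposition~\ref{prop:crit-overlapping-curve}: it suffices to produce two Pell bounds $\pi_\lambda,\pi_\mu$ whose submaximality intervals $J_\lambda,J_\mu$ overlap with neither contained in the other, and which are not jointly covered by the submaximality interval $J_\tau$ of any Pell bound. Write $e=e_n=1+8n^2$, so that $L_0^2=2$, $L_0\cdot L_\infty=1$, $L_\infty^2=\tfrac{1-e}{2}=-4n^2$; note $n\ge 2$, since $e_1=9$ is a square and hence excluded. As the two Pell bounds I would take those attached to the ample primitive classes
$$
C_\lambda=2nL_0+L_\infty\quad(\lambda=\tfrac1{2n}),\qquad C_\mu=(2n-1)L_0+L_\infty\quad(\mu=\tfrac1{2n-1}).
$$
Here $C_\lambda^2=4n^2+4n=(2n+1)^2-1$ and $C_\mu^2=4n^2-4n=(2n-1)^2-1$, so the primitive solutions of $x^2-C_\lambda^2y^2=1$ and $x^2-C_\mu^2y^2=1$ are $(l,k)=(2n+1,1)$ and $(2n-1,1)$ (primitive because $k=1$). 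One checks via Lemma~\ref{lem:ample-criterion} that both classes are ample, hence $\lambda,\mu$ lie in the interior of $\mathcal N(X)$, and that $\sqrt{L_\lambda^2}=\sqrt{(n+1)/n}$ and $\sqrt{L_\mu^2}=\tfrac2{2n-1}\sqrt{n(n-1)}$ are irrational for $n\ge 2$; so $\pi_\lambda$ and $\pi_\mu$ are defined.

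For condition~(i) I would feed these data into Lemma~\ref{lem:interval-borders}: the endpoints of $J_\lambda=(a_\lambda,b_\lambda)$ and $J_\mu=(a_\mu,b_\mu)$ come out as explicit expressions $(A\pm B\sqrt e)/D$ with $A,B,D$ polynomials in $n$. The needed chain $a_\lambda<a_\mu<b_\lambda<b_\mu$ reduces, after isolating $\sqrt e$ and squaring, to polynomial inequalities in $n$; the only substantive link is $a_\mu<b_\lambda$ (the overlap), the other two being comfortable. Verifying these for all $n\ge 2$ (with a direct check for the smallest few values) gives that $U:=J_\lambda\cup J_\mu=(a_\lambda,b_\mu)$ is an interval and neither $J_\lambda$ nor $J_\mu$ contains the other.

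Condition~(ii) is the heart of the argument. If some Pell bound $\pi_\tau$ had $J_\tau\supseteq U$, then $|J_\tau|\ge|U|$, so by Lemma~\ref{lem:interval-length} the denominator $q_\tau$ of $\tau$ satisfies $q_\tau<\sqrt{11}/(|U|\sqrt e)$. I would establish the (tight) lower bound $|U|\ge\sqrt{11}/(2n\sqrt e)$ — \emph{this is exactly where the sharp form $\sqrt{11}/(q\sqrt e)$ of Lemma~\ref{lem:interval-length} is needed, the weaker bound $\sqrt{14}/(q\sqrt e)$ being insufficient} — which forces $q_\tau\le 2n-1$. Since $\sqrt e>2\sqrt2\,n$ gives $\mathcal N(X)\subseteq\bigl(-\tfrac{2}{2\sqrt2\,n-1},\tfrac{2}{2\sqrt2\,n-1}\bigr)$, any $\tau=p/q_\tau\in\mathcal N(X)$ with $q_\tau\le 2n-1$ has $|p|\le 1$. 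There remain three types of $\tau$. For $\tau=0$: the right endpoint of $J_0$ equals $(2+6\sqrt e)/(9e-1)$, and using $\sqrt e<3n$ one checks this is $<\mu$, so $\mu\notin J_0$ and $J_0\not\supseteq U$. For $\tau=-1/q$ (then $q\ge 2$, as $-1\notin\mathcal N(X)$): a short estimate of centre plus half‑width shows the right endpoint of $J_\tau$ is negative, while $a_\lambda>0$, so $J_\tau\cap U=\emptyset$. For $\tau=1/q$ with $\sqrt2\,n<q\le 2n-1$: the primitive solution $(l_0,k_0)$ of $x^2-C_\tau^2y^2=1$, with $C_\tau=qL_0+L_\infty$, satisfies $k_0/l_0\ge 1/\sqrt{C_\tau^2+1}$, so
$$
\pi_\tau(\lambda)=\frac{k_0\,(C_\tau\cdot C_\lambda)}{2n\,l_0}\ \ge\ \frac{C_\tau\cdot C_\lambda}{2n\sqrt{C_\tau^2+1}}\,;
$$
with $C_\tau^2=2q^2+2q-4n^2$ and $C_\tau\cdot C_\lambda=q(4n+1)-4n^2+2n\ (>0)$, the inequality $\pi_\tau(\lambda)\ge\sqrt{L_\lambda^2}=\sqrt{(n+1)/n}$ becomes, after squaring,
$$
\bigl(q(4n+1)-4n^2+2n\bigr)^2\ \ge\ 4n(n+1)\bigl(2q^2+2q-4n^2+1\bigr),
$$
a polynomial inequality in $(n,q)$ holding throughout $\sqrt2\,n<q\le 2n-1$ (it is tightest at $q=2n-1$, where the difference of the two sides equals $(2n-1)^2$). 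Hence $\lambda\notin J_\tau$, so $J_\tau\not\supseteq U$; the case $q=2n-1$ is $\tau=\mu$, already excluded by $a_\lambda<a_\mu$. This exhausts all $\tau$, so condition~(ii) holds and Proposition~\ref{prop:crit-overlapping-curve} yields a line bundle with two submaximal curves.

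The main obstacle is the uniform‑in‑$n$ bookkeeping in condition~(ii): securing the sharp lower bound for $|U|$ that pins $q_\tau$ strictly below $2n$, and then verifying the family of polynomial inequalities $\pi_\tau(\lambda)\ge\sqrt{L_\lambda^2}$ simultaneously over all admissible $q$ (together with finitely many small‑$n$ cases). Each individual inequality is elementary once assembled, but the estimates are genuinely narrow — which is precisely why the improved interval‑length bound of Lemma~\ref{lem:interval-length} is indispensable here.
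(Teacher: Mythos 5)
Your proposal is correct, and its first half coincides with the paper's argument: the same two Pell bounds $\pi_{1/(2n)}$, $\pi_{1/(2n-1)}$ with Pell solutions $(2n+1,1)$, $(2n-1,1)$, verification of condition (i) of Prop.~\ref{prop:crit-overlapping-curve} from the explicit intervals, and Lemma~\ref{lem:interval-length} to bound the denominator of any Pell bound that could cover $U=J_{1/(2n)}\cup J_{1/(2n-1)}$. Where you genuinely diverge is condition (ii). The paper plays an upper bound $q\le\frac{35}{18}n$ against a two-step lower bound $q\ge 2n-3$, which yields a contradiction only for $n\ge 55$, and then invokes the computer-assisted Prop.~\ref{prop:overlapping-curves-50.000} for $n\le 54$. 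You instead pin $q\le 2n-1$ via the lower bound $|U|\ge\sqrt{11}/(2n\sqrt{e})$, deduce that the numerator of $\tau$ lies in $\{0,\pm1\}$, and eliminate each candidate by hand: $\mu\notin J_0$, the intervals $J_{-1/q}$ lie in the negative half while $U$ is positive, and for $\tau=1/q$ the inequality $\bigl(q(4n+1)-4n^2+2n\bigr)^2\ge 4n(n+1)(2q^2+2q-4n^2+1)$, which indeed reduces to $(8n^2+1)(q-2n)^2\ge 4n(n+1)$ and thus holds for every integer $q\ne 2n$ with margin $(2n-1)^2$ at $q=2n\mp1$. If carried out, this gives a uniform proof for all $n\ge 2$ that is independent of the machine verification -- a real gain over the paper's route, whose small-$n$ cases rest on computation; the price is the extra bookkeeping you acknowledge. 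Two points to watch when writing it up: the lower bound $|U|\ge\sqrt{11}/(2n\sqrt{e})$ is true but genuinely narrow (the ratio of the two sides decreases to about $1.03$ as $n\to\infty$, so, as you say, the constant $\sqrt{11}$ of Lemma~\ref{lem:interval-length} is what makes it work), so it must be proved carefully, e.g.\ by isolating $\sqrt{e}$ and squaring; and in the case $\tau=1/q$ the correct admissible range is $q>\tfrac{\sqrt{e}-1}{2}$ rather than $q>\sqrt{2}\,n$, which can admit one further integer value of $q$ -- harmless, since your quadratic inequality covers all $q\le 2n-1$, but the range should be stated accordingly (and positivity of $C_\tau\cdot C_\lambda$ checked on that range, as it is).
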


\begin{proof}
Consider the ample line bundles $L=2nL_0+L_\infty$ and $L'=(2n-1)L_0+L_\infty$.
The Pell solution of $x^2-L^2y^2=1$ is given by $(2n+1,1)$, and the Pell solution of $x^2-L'^2y^2=1$ is $(2n-1,1)$.
Hence, the submaximality intervals of the corresponding Pell bounds $\pi_\frac{1}{2n}$ and $\pi_\frac{1}{2n-1}$ are given by
\be
J_\frac{1}{2n}=\left(\frac{16n^3+(2n+1) (1-\sqrt{8n^2+1})}{8n^2(4n^2+1)},\frac{16n^3+(2n+1)(1+\sqrt{8n^2+1})}{8n^2(4n^2+1)}\right)
\ee
and, respectively,
\be
J_\frac{1}{2n-1}=\left(\frac{2n+(2n-1)(8n^2-\sqrt{8n^2+1})}{32n^4-32n^3+16n^2-4n+1},\frac{2n+(2n-1)(8n^2+\sqrt{8n^2+1})}{32n^4-32n^3+16n^2-4n+1} \right)\,.
\ee
Explicit computations show that both Pell bounds $\pi_\frac{1}{2n}$ and $\pi_\frac{1}{2n-1}$ are submaximal at $\frac{2}{4n-1}$, and therefore their submaximality intervals intersect.

So the first condition of Prop.~\ref{prop:crit-overlapping-curve} is satisfied.
In order to conclude that a line bundle with two submaximal curves exists, it remains to show that
there does not exist another Pell bound $\pi_\lambda$ whose submaximality interval covers the interval $I := J_\frac{1}{2n}\cup J_\frac{1}{2n-1}$.
For this, we will derive an upper bound and a lower bound for the denominator $q$ of $\lambda=\frac{p}{q}$ which must be satisfied if the Pell bound $\pi_\lambda$ covers $I$.
As we will see, the upper and lower bound contradict each other and thus there cannot exist such a Pell bound.

\textit{Upper bound for $q$:}
The Pell bound $\pi_\lambda$ has to cover both submaximality intervals, i.e., the interval
\be
I=\left(\frac{16n^3+(2n+1)(1 -\sqrt{8n^2+1})}{8n^2(4n^2+1)},\frac{2n+(2n-1)(8n^2+\sqrt{8n^2+1})}{32n^4-32n^3+16n^2-4n+1} \right)\,.
\ee
One can show that the length of this interval is at least $(\sqrt{2}+1)/(4n^2)$,
and using Lemma~\ref{lem:interval-length} we derive the upper bound
\be
q\leq \frac{4\sqrt{11}n^2}{(\sqrt{2}+1)\sqrt{8n^2+1}}\leq \frac{35}{18}n\,.
\ee

\textit{Lower bound for $q$:}
First, we observe that the unique Pell bound $\pi_0$ is not submaximal for $L$ and $L'$ and, thus, we may assume that $\lambda\neq 0$, i.e. $p\neq 0$.
We obtain a preliminary lower bound for $q$ by taking into account that the line bundle $L_\lambda$ has to be ample, i.e.,
\be
L_\lambda^2=2+\frac{2p}{q} -  \frac{4n^2p^2}{q^2} >0\,,
\ee
and, thus,
\be
q\geq \frac{p}{2}(\sqrt{8n^2+1}-1)\geq \sqrt{2}\,(n-1)\,.
\ee
Unfortunately, this lower bound yields no contradiction with our upper bound.
However, it provides us with a method to refine the lower bound.
Using the computation from Lemma~\ref{lem:interval-length}, we find a maximal possible length for the submaximality interval $J_\lambda=(t_1,t_2)$ provided that $\sqrt{2}\,(n-1)\leq q \leq \frac{35}{18}n$:
\be
t_2 - t_1  < \frac{2\sqrt{2+\frac{2}{e_n-1}+\frac{1}{k^2q^2}} }{q\sqrt{e_n}}\leq \frac{2\sqrt{2+\frac{1}{4n^2}+\tfrac{1}{2\,(n-1)^2}} }{\sqrt{2}(n-1)\sqrt{1+8n^2}}
 <  \frac{\sqrt{2+\frac{1}{4n^2}+\tfrac{1}{2\,(n-1)^2}} }{2n(n-1)}\,.
\ee
This in turn, gives us an upper bound for $\lambda$, since the submaximality interval of $\pi_\lambda$ can cover at most $t_1-t_2$:
\be
\lambda\leq \frac{16n^3+2n+1 -(2n+1)\sqrt{8n^2+1}}{8n^2(4n^2+1)} + \frac{\sqrt{2+\frac{1}{4n^2}+\tfrac{1}{2\,(n-1)^2}} }{2n(n-1)}\,.
\ee
It follows that $\lambda\leq \frac{1}{2n-3}$ and, therefore, the denominator $q$ of $\lambda$ must be at least $2n-3$.

This shows that for $n\geq 55$ there cannot exist a Pell bound whose submaximality interval
covers $J_\frac{1}{2n}$ and $J_\frac{1}{2n-1}$.
Thus, the assertion follows for $n\geq 55$ from Prop.~\ref{prop:crit-overlapping-curve}.
The explicit computations from Prop.~\ref{prop:overlapping-curves-50.000} cover the remaining cases for $n\leq 54$.
\end{proof}

\begin{remark}
The case where $e_n$ is a square number, i.e., $e_n=1+8n^2=r^2$ for an integer $r\in \N$, is equivalent to the case where $(r,n)$ is a solution for the Pell equation $x^2-8y^2=1$, and hence there are infinitely many $n$ such that $e_n$ is not a square number.
\end{remark}



\footnotesize
   \bigskip
   Thomas Bauer,
   Fachbereich Mathematik und Informatik,
   Philipps-Universit\"at Marburg,
   Hans-Meerwein-Stra\ss e,
   D-35032 Marburg, Germany.

   \nopagebreak
   \textit{E-mail address:} \texttt{tbauer@mathematik.uni-marburg.de}

   \bigskip
   Maximilian Schmidt
   Fachbereich Mathematik und Informatik,
   Philipps-Universit\"at Marburg,
   Hans-Meerwein-Stra\ss e,
   D-35032 Marburg, Germany.

   \nopagebreak
   \textit{E-mail address:} \texttt{schmid4d@mathematik.uni-marburg.de}


\end{document}